\numberwithin{equation}{section}
\def\cE{{\mathcal E}}
\def\cK{{\mathcal K}}
\def\bF{{\mathbf F}}
\def\bF{{\mathbf F}}
\def\bJ{{\mathbf J}}
\def\SL{{\mathbf S}{\mathbf L}}
\def\ZZ{{\mathbb Z}}
\def\RR{{\mathbb R}}
\def\CC{{\mathbb C}}
\newtheorem{prop}{Proposition}[section]
\newtheorem{theo}[prop]{Theorem}
\newtheorem{lemm}[prop]{Lemma}
\newtheorem{coro}[prop]{Corollary}
\newtheorem{rema}[prop]{Remark}
\newtheorem{defi}[prop]{Definition}
\def\begeq{\begin{equation}}
\def\endeq{\end{equation}}
\def\lab{\ }
\def\lab{\label}
\title{K-stability and K\"ahler-Einstein metrics}
\author{Gang Tian\thanks{Supported partially by
a NSF grant}
\\Beijing University and Princeton University}
\date{}
\begin{document}

\maketitle

\tableofcontents

\section{Introduction}

In this paper, we solve a folklore conjecture \footnote{It is often referred as the Yau-Tian-Donaldson conjecture} on Fano manifolds without non-trivial holomorphic vector fields. The main technical ingredient is a conic version of Cheeger-Colding-Tian's theory on compactness of K\"ahler-Einstein manifolds.
This enables us to prove an extension of the partial $C^0$-estimate for K\"ahler-Einstein metrics established in \cite{donaldsonsun12} and \cite{tian12}.

A Fano manifold is a projective manifold with positive first Chern class $c_1(M)$. Its holomorphic fields form a Lie algebra $\eta(M)$.
The folklore conjecture states: {\it If $\eta(M)\,=\,\{0\}$, then $M$ admits a K\"ahler-Einstein metric if and only if
$M$ is K-stable with respect to the anti-canonical bundle $K_M^{-1}$.} Its necessary part was established in \cite{tian97}.
The following gives the sufficient part of this conjecture.

\begin{theo}
\lab{th:main-2}
Let $M$ be a Fano manifold canonically polarized by the anti-canonical bundle $K_{M}^{-1}$. If $M$ is K-stable, then it admits a K\"ahler-Einstein metric.
\end{theo}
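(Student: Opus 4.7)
The plan is to use a conic continuity method in $c_1(M)$, combined with the partial $C^0$-estimate and the conic Cheeger-Colding-Tian theory announced in the introduction, reducing any failure of the continuity path to an algebro-geometric degeneration that would contradict K-stability. Fix a smooth divisor $D\in|-\lambda K_M|$ for some large integer $\lambda$ (obtained by Bertini, since $-K_M$ is ample), and for $\beta\in(0,1]$ consider the family of conic K\"ahler-Einstein equations
\[
\mathrm{Ric}(\omega_\beta)\,=\,\beta\,\omega_\beta+\frac{1-\beta}{\lambda}\,[D],
\]
with cone angle $2\pi\bigl(1-(1-\beta)/\lambda\bigr)$ along $D$ and $\omega_\beta\in c_1(M)$. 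Let $I\subset(0,1]$ denote the set of parameters admitting such a solution; the goal is to prove $I=(0,1]$, since $\beta=1$ gives the desired smooth K\"ahler-Einstein metric on $M$.

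The set $I$ is non-empty and open by standard analysis: for $\beta$ near $0$ the equation reduces to a Monge-Amp\`ere equation with effective negative twist, solvable by a log version of Aubin-Yau; and openness follows from invertibility of the linearized operator in conic H\"older spaces, where the assumption $\eta(M)=\{0\}$ rules out the obvious kernel. The real work is closedness. Taking $\beta_i\to\beta_\infty$ with solutions $\omega_{\beta_i}$, the conic generalization of Cheeger-Colding-Tian's compactness theory produces, along a subsequence, a pointed Gromov-Hausdorff limit $(M_\infty,d_\infty)$ with a regular-singular stratification, singular set of real codimension at least two, and smooth part carrying a K\"ahler-Einstein metric. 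The crucial analytic input is the partial $C^0$-estimate: there exist an integer $k$ and $c>0$ such that the Bergman function of $K_M^{-k}$ with respect to $\omega_{\beta_i}$ is bounded below by $c$ uniformly in $i$. This upgrades the Gromov-Hausdorff convergence to an algebraic flat limit inside a common projective space $\mathbb{P}^{N_k}$, and identifies $M_\infty$ with a normal $\QQ$-Fano variety.

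I then finish in two cases. If $M_\infty$ is biholomorphic to $M$, a partial regularity argument together with the conic K\"ahler-Einstein equation shows that the limit structure is a smooth conic K\"ahler-Einstein metric with parameter $\beta_\infty$, so $\beta_\infty\in I$. If $M_\infty\not\cong M$, the one-parameter family of projective embeddings of $M$ converging to $M_\infty$ inside $\mathbb{P}^{N_k}$ produces a test configuration for $(M,K_M^{-1})$ with central fibre $M_\infty$; an entropy/energy comparison along the continuity path, in the spirit of Ding-Tian, shows that the generalized Donaldson-Futaki invariant of this test configuration is non-positive and vanishes only if $M_\infty\cong M$. This contradicts K-stability of $(M,K_M^{-1})$, ruling out the second case. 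Hence $I$ is closed, $1\in I$, and $M$ admits a K\"ahler-Einstein metric.

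The hard part will be establishing the conic partial $C^0$-estimate uniformly in $\beta$ and the underlying conic Cheeger-Colding-Tian structure theory. One must extend the description of non-collapsed Gromov-Hausdorff limits of K\"ahler-Einstein manifolds to sequences whose Ricci curvature carries a singular distributional contribution along a divisor with varying cone angle, identify tangent cones as metric cones over K\"ahler manifolds with conic strata, and use $L^2$-estimates for $\bar\partial$ on approximate tangent cones to construct uniformly peaked holomorphic sections of $K_M^{-k}$ near every point, including near the conic divisor and near the singular set of the limit.
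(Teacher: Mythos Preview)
Your proposal is correct and follows the same route as the paper: the conic continuity method along a smooth divisor $D\in|K_M^{-\lambda}|$, with closedness obtained by combining the conic extension of Cheeger--Colding--Tian compactness, the partial $C^0$-estimate (yielding a normal $\QQ$-Fano limit $M_\infty\subset\CC P^N$ in the orbit closure of $M$), and a Futaki-type obstruction on $M_\infty$ that contradicts K-stability when $M_\infty\not\cong M$. The paper makes two steps more explicit than your sketch: it produces an actual $\CC^*$-action on $M_\infty$ (your ``one-parameter family of embeddings'' is only a sequence) by proving that the Lie algebra of the stabilizer of $M_\infty$ is reductive via the weak conic K\"ahler--Einstein structure on the limit, and it deduces $f_{M_\infty}(X)\le 0$ not by an energy comparison but from the linearity of the log-Futaki invariant $f_{M_\infty,(1-\beta)D_\infty}(X)$ in $\beta$, which vanishes at $\bar\beta$ (since $\omega_\infty$ is conic K\"ahler--Einstein) and is strictly positive for some smaller $\beta_1$.
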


An older approach for proving this theorem is to solve the following complex Monge-Ampere equations by the continuity method:
\begin{equation}
\lab{eq:cont-t}
(\omega + \sqrt{-1}\,\partial\bar\partial \varphi )^n \,=\, e^{h - t \varphi} \omega^n,~~~\omega + \sqrt{-1}\,\partial\bar\partial \varphi \,>\, 0,
\end{equation}
where $\omega$ is a given K\"ahler metric with its K\"ahler class $[\omega] = 2\pi c_1(M)$ and $h$ is uniquely determined by
$$ {\rm Ric}(\omega) - \omega \,=\, \sqrt{-1}\,\partial\bar\partial h,~~~ \int_M (e^h - 1 )\, \omega^n\,=\,0.$$
Let $I$ be the set of $t$ for which \eqref{eq:cont-t} is solvable. Then we have known: (1) By the well-known Calabi-Yau theorem, $I$ is non-empty;
(2) In 1983, Aubin proved that $I$ is open \cite{aubin83}; (3) If we can have
an a priori $C^0$-estimate for the solutions of \eqref{eq:cont-t}, then $I$ is closed and consequently, there is a K\"ahler-Einstein metric on $M$.

However, the $C^0$-estimate does not hold in general since there are many Fano manifolds which do not admit any K\"ahler-Einstein metrics.
The existence of K\"ahler-Einstein metrics required certain geometric stability on the underlying Fano manifolds.
In early 90's, I proposed a program towards establishing the existence of K\"ahler-Einstein metrics.
The key technical ingredient of this program is a conjectured partial $C^0$-estimate. If we can affirm this conjecture for the solutions of \eqref{eq:cont-t}, then
we can use the K-stability to derive the a prior $C^0$-estimate and the K\"ahler-Einstein metric. The K-stability was first introduced in \cite{tian97} as
a test for the properness of the K-energy restricted to a finite dimensional family of K\"ahler metrics induced by a fixed embedding by pluri-anti-canonical sections.\footnote{The K-stability was reformulated in more algebraic ways (see \cite{donaldson02}, \cite{paul08} et al.).} However, such a conjecture on partial $C^0$-estimates is still open except for K\"ahler-Einstein metrics.

In \cite{donaldson10}, Donaldson suggested a new continuity method by using conic K\"ahler-Einstein metrics. Those are metrics with conic angle along a divisor.
For simplicity, here we consider only the case of smooth divisors.

Let $M$ be a compact K\"ahler manifold and $D\subset M$ be a smooth divisor.
A conic K\"ahler metric on $M$
with angle $2\pi \beta$ ($0 < \beta \le 1$) along $D$ is a K\"ahler metric on $M\backslash D$ that is asymptotically equivalent along $D$ to
the model conic metric
$$ \omega_{0,\beta} \,=\, \sqrt{-1} \left (\frac{dz_1\wedge d\bar z_1}{|z_1|^{2-2\beta} } \,+\,\sum_{j=2}^n dz_j \wedge d\bar z_j\right ),$$
where $z_1, z_2, \cdots, z_n$ are holomorphic coordinates such that $ D = \{z_1 = 0\}$ locally.
Each conic K\"ahler metric can be given by its K\"ahler form $\omega$ which  represents a cohomology class in $H^{1,1}(M,\CC)\cap H^2(M,\RR)$, referred as
the K\"ahler class $[\omega]$. A conic K\"ahler-Einstein metric is a conic K\"ahler metric which is also
Einstein outside conic points.

In this paper, we only need to consider the following conic K\"ahler-Einstein metrics: Let $M$ be a Fano manifold and $D$ be a smooth
divisor which represents the Poincare dual of $\lambda c_1(M)$. We call $\omega$ a conic K\"ahler-Einstein with conic angle $2\pi \beta$ along $D$
if it has $2\pi c_1(M)$ as its K\"ahler class and satisfies
\begin{equation}
\label{eq:conic-1}
{\rm Ric}(\omega)\,=\,\mu \omega\,+\, 2\pi (1-\beta) [D].
\end{equation}
Here the equation on $M$ is in the sense of currents, while it is classical outside $D$. We will require $\mu > 0$ which is equivalent to
$(1-\beta )\lambda < 1$. As in the smooth case, each conic K\"ahler metric $\omega $ with $[\omega] = 2\pi c_1(M)$ is the curvature of a
Hermitian metric $||\cdot ||$ on the anti-canonical bundle $K_M^{-1}$. The difference is that the Hermitian metric is not smooth, but it
is H\"older continuous.

Donaldson's continuity method was originally proposed as follows: Assume that $\lambda =1$, i.e., $D$ be a smooth anti-canonical divisor. It follows from
\cite{tianyau} that there is a complete Calabi-Yau metric on $M\backslash D$. It was conjectured that this complete metric is the limit of
K\"ahler-Einstein metrics with conic angle $2\pi \beta \mapsto 0$. If this is true, then the set $E$ of $\beta \in (0, 1]$ such that there is a conic K\"ahler
metric satisfying \eqref{eq:conic-1} is non-empty. It is proved in \cite{donaldson10} that $E$ is open. Then we are led to proving that $E$ is closed.

A problem with this original approach of Donaldson arose because we do not know if a Fano manifold $M$ always has
a smooth anti-canonical divisor $D$. Possibly, there are Fano manifolds which do not admit smooth anti-canonical divisors. At least, it seems to be
a highly non-trivial problem whether or not any Fano manifold admits a smooth
anti-canonical divisor. Fortunately, Li and Sun bypassed this problem. Inspired by \cite{jeffresmazzeorubinstein}, they modified Donaldson's original
approach by allowing $\lambda > 1$. They observed that the main existence theorem in \cite{jeffresmazzeorubinstein},
coupled with an estimate on log-$\alpha$ invariants in \cite{berman}, implies the existence of conic K\"ahler-Einstein metrics with conic angle $2\pi \beta$
so long as $\mu = 1-(1-\beta)\lambda$ is sufficiently small.
Now we define $E$ to be set of $\beta \in (1-\lambda^{-1}, 1]$ such that there is a conic K\"ahler
metric satisfying \eqref{eq:conic-1}. Then $E$ is non-empty. It follows from \cite{donaldson10} that $E$ is open.
The difficult part is to prove that $E$ is closed.

The construction of K\"ahler-Einstein metrics with conic angle $2\pi \beta$ can be reduced to solving complex Monge-Ampere equations:
\begin{equation}
\label{eq:conic-2}
(\omega_\beta \,+\, \sqrt{-1} \partial \bar\partial \varphi )^n \,=\, e^{h_\beta - \mu \varphi} \omega^n_\beta,
\end{equation}
where $\omega_\beta$ is a suitable family of conic K\"ahler metrics with $[\omega_\beta] = 2\pi c_1(M)$ and cone angle $2\pi \beta$ along $D$ and
$h_\beta$ is determined
by
$${\rm Ric}(\omega_\beta)\,=\,\mu \,\omega_\beta \,+\, 2\pi (1-\beta)\, [D] + \sqrt{-1} \,\partial\bar\partial h_\beta~~{\rm and}~~\int_M( e^{h_\beta} - 1)\, \omega^n_{\beta}\,=\,0.$$
As shown in \cite{jeffresmazzeorubinstein}, it is crucial for solving \eqref{eq:conic-2} to establish
an a priori $C^0$-estimate
for its solutions. Such a $C^0$-estimate does not hold in general. Therefore, following my program on the existence of K\"ahler-Einstein metrics
through the Aubin's continuity method, we can first establish a partial $C^0$-estimate and then use the K-stability to conclude the $C^0$-estimate, consequently, the
existence of K\"ahler-Einstein metrics on Fano manifolds which are K-stable.

For any integer $\lambda > 0$ and $\beta > 0$, let $\cE(\lambda,\beta)$ be the set of all triples $(M,D,\omega)$, where $M$ is a Fano manifold,
$D$ is a smooth divisor whose Poincare dual is $\lambda\, c_1(M)$ and $\omega$ is a conic K\"ahler-Einstein metric on $M$ with cone angle $2\pi \beta$ along $D$.
For any $\omega\in \cE(\lambda,\beta)$, choose a $C^1$-Hermitian metric $h$ with $\omega$ as its curvature form and any orthonormal basis $\{S_i\}_{0\le i\le N}$
of each $H^0(M, K_M^{-\ell})$ with respect to the induced inner product by $h$ and $\omega$. Then as did in the smooth case, we can introduce a function
\begin{equation}
\lab{hpartial-1}
\rho_{\omega,\ell} (x) \,=\,\sum_{i=0}^N || S_i||_{ h}^2(x).
\end{equation}

One of main results in this paper is the following.

\begin{theo}
\lab{th:main-1}
For any fixed $\lambda$ and $\beta_0 > 1 - \lambda^{-1}$, there are uniform constants $c_k=c(k, n, \lambda,\beta_0)>0$ for $k\ge 1$ and $\ell_i \to \infty$
such that for any $\beta \ge \beta_0$ and $\omega \in \cE(\lambda,\beta)$, we have for $\ell = \ell_i$,

\begin{equation}
\lab{eq: conicpartial}
\rho_{\omega, \ell} \,\ge\, c_\ell\,> 0.
\end{equation}
\end{theo}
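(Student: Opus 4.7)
The plan is to argue by contradiction, following the broad strategy used for smooth K\"ahler--Einstein metrics in \cite{donaldsonsun12} and \cite{tian12}, but now adapted to the conic setting. Assume the estimate fails: then there exist $\beta_i \ge \beta_0$, triples $(M_i,D_i,\omega_i) \in \cE(\lambda,\beta_i)$, integers $\ell_i \to \infty$, and points $x_i \in M_i$ with $\rho_{\omega_i,\ell_i}(x_i) \to 0$. It suffices to produce, along a further subsequence and for suitable powers $\ell = \ell_i$, holomorphic sections $s_i \in H^0(M_i, K_{M_i}^{-\ell})$ with $\|s_i\|_{L^2}=1$ and $|s_i(x_i)|$ bounded uniformly below; this directly contradicts the assumed decay of $\rho_{\omega_i,\ell_i}(x_i)$.

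The first step is compactness: since $[\omega_i] = 2\pi c_1(M_i)$ yields uniform volume and diameter bounds and the Ricci curvature ${\rm Ric}(\omega_i) = \mu_i \omega_i + 2\pi(1-\beta_i)[D_i]$ is bounded below in a conic sense, I would apply the conic Cheeger--Colding--Tian compactness theory to extract a pointed Gromov--Hausdorff limit $(M_i, \omega_i, x_i) \to (M_\infty, d_\infty, x_\infty)$, together with $D_i \to D_\infty$ and $\beta_i \to \beta_\infty \in [\beta_0,1]$. The heart of the proof is then to establish a structure theorem for this limit: a stratification $M_\infty = \cR \sqcup \cD_\infty \sqcup \cS$ in which $\cR$ carries a smooth K\"ahler--Einstein metric of Einstein constant $\mu_\infty$, the divisor stratum $\cD_\infty$ is locally modeled on $\CC_{\beta_\infty} \times \CC^{n-1}$ (with $\CC_{\beta_\infty}$ the flat $2$-cone of total angle $2\pi\beta_\infty$), and the residual singular set $\cS$ has real Hausdorff codimension at least $4$. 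One must also show that $M_\infty$ is a normal projective variety with $\cD_\infty$ an effective divisor on it.

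Once this structure is in hand, peaked sections are produced in two stages. On the tangent cone at $x_\infty$, the dilation symmetry together with the cone metric yields, for a discrete infinite set of tensor powers $\ell = \ell_i$ adapted to the rational angle data at $x_\infty$, a homogeneous holomorphic section of the anti-canonical orbifold bundle peaked at the vertex. Cutting off and gluing into a global candidate, H\"ormander's weighted $L^2$ $\bar\partial$-estimate on $M_\infty$ corrects it into a genuine $s_\infty \in H^0(M_\infty, K_{M_\infty}^{-\ell})$ with $|s_\infty(x_\infty)|$ bounded below; the H\"ormander estimate across $\cD_\infty \cup \cS$ is legitimized by the Bochner--Kodaira inequality together with the codimension bounds and a standard cutoff near the singular set. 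Finally, the Gromov--Hausdorff identification of large subsets of $M_i$ with $\cR \cup \cD_\infty$ lets me pull $s_\infty$ back to an approximate section on $M_i$, and a $\bar\partial$-correction with uniform $L^2$-bounds produces the desired $s_i$.

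The main obstacle is the conic Cheeger--Colding--Tian structure theorem of the second step: the codimension-$2$ stratum $\cD_\infty$ is itself singular as a metric space, so one must refine Cheeger--Colding's $\epsilon$-regularity and almost-splitting results so as to \emph{allow} this conic stratum as the generic singularity while ruling out any deeper singular behavior of codimension $< 4$ outside $\cD_\infty$. The ingredients I would develop for this are a conic Bishop--Gromov comparison, a conic parabolic Sobolev-type inequality, and the convergence of the defect currents $2\pi(1-\beta_i)[D_i]$ to $2\pi(1-\beta_\infty)[\cD_\infty]$ across the Gromov--Hausdorff limit. Given that input, the remainder of the argument parallels the smooth case treated in \cite{donaldsonsun12} and \cite{tian12}.
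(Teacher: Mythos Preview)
Your overall architecture---Gromov--Hausdorff compactness, tangent-cone analysis, H\"ormander $\bar\partial$-correction, and gradient estimates to propagate the peak---matches the paper's strategy in Section~5. But you are missing the paper's central technical device, and without it your ``main obstacle'' paragraph proposes a substantially harder program than what is actually needed.

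The paper does \emph{not} develop a conic Bishop--Gromov comparison or a conic Cheeger--Colding--Tian theory from scratch. Instead, Section~2 proves that every conic K\"ahler--Einstein metric $\omega$ with cone angle $2\pi\beta$ can be approximated in the Gromov--Hausdorff topology by \emph{smooth} K\"ahler metrics $\omega_\delta$ on the same manifold, each satisfying ${\rm Ric}(\omega_\delta) \ge \mu$ (Theorem~\ref{th:2-1} and Theorem~\ref{th:GH}). This is done by solving a smoothed Monge--Amp\`ere equation with $||S||_0^2$ replaced by $\delta + ||S||_0^2$, and using a properness argument for the associated functional. Once this approximation is in hand, the \emph{smooth} Cheeger--Colding and Cheeger--Colding--Tian theory applies directly to the $\tilde\omega_i$, and the conic limit inherits the structure theorem for free (Theorem~\ref{th:3-1}). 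Your proposed conic parabolic Sobolev inequality and conic almost-splitting are thus bypassed entirely.

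Two further corrections. First, the codimension-$2$ stratum is \emph{not} locally modeled on $\CC_{\beta_\infty}\times\CC^{n-1}$: the cone angle there is $2\pi\bar\mu$ with $(1-\bar\mu)=m(1-\beta_\infty)$ for some integer $m\ge 1$ (Theorem~\ref{th:tangentcone}, {\bf C4}), reflecting possible tangential coming-together of sheets of $D$. This matters for the choice of the powers $\ell$ in Lemma~\ref{lemm:4-4}. Second, the paper constructs the approximate section directly on $(M,\omega_i)$ via the chain of diffeomorphisms $\tilde\phi_i\circ\phi_j$ from the tangent cone, rather than first producing a section on $M_\infty$ and pulling it back; your two-step version is workable but the paper's one-step transfer is cleaner and avoids needing the normality of $M_\infty$ as an input (that is proved only \emph{after} the partial $C^0$-estimate, in Theorem~\ref{th:conj-2}).
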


In \cite{tian12}, we conjectured that this theorem holds for more general conic K\"ahler metrics.\footnote{Our method in this paper can be also applied to getting 
the partial $C^0$-estimate in this more general case.}

The most crucial tool in proving Theorem \ref{th:main-1} is an extension of a compactness theorem of Cheeger-Colding-Tian for K\"ahler-Einstein metrics.
One needs extra technical inputs to establish such an extension.

As a consequence of Theorem \ref{th:main-1}, we have

\begin{theo}
\lab{th:main-3}
Let $M$ be a Fano manifold with a smooth pluri-anti-canonical divisor $D$ of $K_M^{-\lambda}$.
Assume that $\omega_i$ be a sequence of conic K\"ahler-Einstein metrics with cone angle $2\pi\beta_i$ along $D$ satisfying:
$${\rm Ric}(\omega_i) \,=\, \mu_i \omega_i \,+\,2\pi (1-\beta_i) [D],~~~~~\mu_i \,=\,1- (1-\beta_i) \lambda.$$
where $\mu_i = 1 - (1-\beta_i) \lambda > 0 $. We further assume that $\lim \mu_i = \mu_\infty > 0$ and $(M,\omega_i)$ converge to a length space
$(M_\infty, d_\infty)$ in the Gromov-Hausdorff topology. Then $M_\infty$ is a smooth K\"ahler manifold outside a closed subset $\bar S$ of codimension at least $4$
and $d_\infty$ is induced by a smooth K\"ahler-Einstein metric outside a divisor $D_\infty\subset M_\infty$. Furthermore,
$(M,\omega_i)$ converge to $(M_\infty,\omega_\infty)$ outside $D_\infty$ in the $C^\infty$-topology and
$D$ converges to $D_\infty$ in the Gromov-Hausdorff topology.
\end{theo}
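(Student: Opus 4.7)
The plan is to combine the partial $C^0$-estimate of Theorem \ref{th:main-1} with a conic extension of the Cheeger-Colding-Tian structure theory in order to upgrade the Gromov-Hausdorff limit $(M_\infty,d_\infty)$ to a complex-analytic K\"ahler-Einstein object. Since $\mu_i\to \mu_\infty>0$, the cone angles $\beta_i$ stay in a compact subinterval of $(1-\lambda^{-1},1]$, so a uniform $\beta_0$ exists to which Theorem \ref{th:main-1} applies. Fix one of the integers $\ell=\ell_j$ from that theorem. The lower bound $\rho_{\omega_i,\ell}\ge c_\ell>0$, together with a matching upper bound from an $L^\infty$ estimate on the right-hand side of \eqref{eq:conic-2} via a mean-value argument, shows that any $L^2$-orthonormal basis of $H^0(M,K_M^{-\ell})$ gives an embedding $\Phi_i\colon M\hookrightarrow \CC\mathbb{P}^N$ whose image has uniformly bounded degree and uniformly Lipschitz Fubini-Study pullback. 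Passing to a subsequence, $\Phi_i(M)$ converges in the Hausdorff topology of $\CC\mathbb{P}^N$ to a subvariety $W_\infty$; the $\Phi_i$ are uniformly bi-Lipschitz onto their images, so $(M_\infty,d_\infty)$ is identified isometrically with $W_\infty$ endowed with the induced length metric. Applying the same construction to sections of $K_M^{-\ell\lambda}$ that vanish on $D$ extracts a limit hypersurface $D_\infty\subset W_\infty$.

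Next I would analyse $M_\infty$ intrinsically using the conic form of Cheeger-Colding-Tian. A conic volume monotonicity produces tangent cones at every point $p\in M_\infty$, and an $\varepsilon$-regularity theorem says that if the cross section of the tangent cone is Gromov-Hausdorff close to round $S^{2n-1}$ then $\omega_i$ converges smoothly near $p$ to a smooth Einstein metric; if instead the cross section is close to that of $C(2\pi\beta_\infty)\times\CC^{n-1}$, then the convergence is smooth off a real codimension-$2$ set and matches a conic model. Iterating the cone-splitting argument, one obtains a decomposition $M_\infty=\Reg(M_\infty)\sqcup D_\infty\sqcup \bar S$ in which $\bar S$ is closed of real codimension $\ge 4$, $D_\infty$ is the codimension-$2$ conic stratum, and $\omega_\infty$ is smooth and K\"ahler-Einstein with Einstein constant $\mu_\infty$ on $M_\infty\setminus (D_\infty\cup\bar S)$. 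The H\"older Hermitian metric on $K_{M_\infty}^{-\ell}$ coming from $\Phi_i$ extends across $D_\infty$ and gives $\omega_\infty$ the structure of a conic K\"ahler-Einstein metric with cone angle $2\pi\beta_\infty=2\pi\lim\beta_i$ along $D_\infty$.

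Finally I would verify the remaining convergence claims. Once uniform $C^0$ bounds on the K\"ahler potentials hold on geodesic balls far from $D_\infty\cup\bar S$ (supplied by the $\varepsilon$-regularity above), elliptic bootstrap on the Monge-Amp\`ere equation \eqref{eq:conic-2} yields $C^\infty$ convergence $\omega_i\to\omega_\infty$ on the complement of $D_\infty$ in $M_\infty\setminus\bar S$, which in view of the codimension of $\bar S$ gives what the statement claims. The Gromov-Hausdorff convergence $D\to D_\infty$ follows from the convergence of $\Phi_i|_D$.

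The main obstacle is the second step: the conic extension of Cheeger-Colding-Tian. In the smooth setting, the codimension-$4$ bound for the singular set rests on Cheng-Yau-type gradient estimates for harmonic functions and on pointwise lower Ricci bounds. In the conic case the Ricci current has a distributional $(1-\beta)$-contribution along $D$, so volume comparison, the almost-splitting theorem, and the $\varepsilon$-regularity underlying the Cheeger-Colding-Tian theory must be reformulated to absorb this extra term. Crucially, one must show that the tangent cone at a point of $D$ is \emph{exactly} $C(2\pi\beta_\infty)\times\CC^{n-1}$ rather than something wilder, and that outside this codimension-$2$ conic stratum the usual codimension-$4$ estimate still applies. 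Carrying this out is the technical heart of the paper, and it is where all the analytical input is concentrated.
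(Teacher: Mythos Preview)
Your overall strategy is in the right neighborhood, but the route you propose through the ``main obstacle'' is not what the paper does, and a couple of your specific claims are wrong.

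The paper does \emph{not} build a direct conic version of Cheeger--Colding--Tian by reworking volume comparison and $\varepsilon$-regularity to absorb the distributional $(1-\beta)[D]$ term. Instead it sidesteps this entirely: Section~2 proves that each conic K\"ahler--Einstein metric $\omega_i$ is the Gromov--Hausdorff limit of \emph{smooth} K\"ahler metrics $\tilde\omega_i$ with ${\rm Ric}(\tilde\omega_i)\ge \mu_i$, obtained by solving a regularized Monge--Amp\`ere equation. This smoothing step is the key ingredient you are missing; once it is in place, the standard Cheeger--Colding and Cheeger--Colding--Tian theory (and Tian--Wang for the $\beta_\infty=1$ case) applies to the smooth sequence $(M,\tilde\omega_i)$ without modification, yielding tangent cones and the initial codimension-$2$ stratification. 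For the $C^\infty$-convergence away from $D_\infty$, the paper again avoids a conic $\varepsilon$-regularity theorem: Anderson's argument fails for conic metrics, and the substitute (Section~4) is to follow the $L^2$-normalized defining section $\sigma_i$ of $D$, show via Moser iteration on $\log(i^{-1}+\|\sigma_i\|_i^2)$ that the limit $\|\sigma_\infty\|$ has nowhere-dense zero set, and observe that on $\{\|\sigma_\infty\|>0\}$ the approximating balls miss $D$, so Anderson's smooth argument does apply there.

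Two concrete errors in your proposal: the maps $\Phi_i$ are uniformly Lipschitz by the gradient estimate on sections, but there is no reason they are uniformly \emph{bi}-Lipschitz; the paper establishes that $\psi_{\infty,\ell}$ is a homeomorphism by a separate Skoda-type effective-generation argument. And the tangent cone at a codimension-$2$ singular point is not forced to be $\CC^{n-1}\times C(2\pi\beta_\infty)$: the paper shows it is $\CC^{n-1}\times C(2\pi\bar\mu)$ with $(1-\bar\mu)=m(1-\beta_\infty)$ for some integer $m\ge 1$, reflecting possible multiplicity in the limit of $D$.
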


This theorem is needed to finish the proof of Theorem \ref{th:main-2}.

The organization of this paper is as follows: In the next section, we prove an approximation theorem which states any conic K\"ahler-Einstein metrics
can be approximated by smooth K\"ahler metrics with the same lower bound on Ricci curvature. This theorem was not known before and is of interest by itself.
In section 3, we give an extension of my works with Cheeger-Colding in \cite{cheegercoldingtian} to conic
K\"ahler-Einstein manifolds.\footnote{My work with Cheeger and Colding \cite{cheegercoldingtian} 
is definitely needed in establishing the partial $C^0$-estimate which is crucial in
proving Theorem \ref{th:main-2}. } In section 4, we prove the smooth convergence for conic K\"ahler-Einstein metrics. In the smooth case, it is
based on a result of M. Anderson. However, the arguments do not apply for the conic case. We have
to introduce a new method. In Section 5, we prove Theorem \ref{th:main-1}, i.e., the partial $C^0$-estimate for conic K\"ahler-Einstein metrics. In last
section, we prove Theorem \ref{th:main-2}.

The existence of K\"ahler-Einstein metrics on K-stable Fano manifold was first mentioned in my talk during the conference "Conformal and K\"ahler
Geometry" held at IHP in Paris from September 17 to September 21 of 2012. On October 25 of 2012, in my talk at the Blainefest held at Stony Brook University,
I outlined my proof of Theorem \ref{th:main-2}. I learned that X.X. Chen, S. Donaldson and S. Sun posted a short note on October 30 of 2012 in which they
also announced
a proof of Theorem \ref{th:main-2}.

{\bf Acknowledgement}: First I like to thank my former advisor S. T. Yau who brought me the problem of the existence of K\"ahler-Einstein metrics on Fano manifolds
when I was the first-year graduate student in 80s. I like to thank my friends and collaborators J. Cheeger and T. Colding, their foundational regularity theory on Einstein metrics and my joint work with them on K\"ahler-Einstein metrics have
played a crucial role in proving Theorem \ref{th:main-2}. I also like to thank B. Wang, my former postdoctor and collaborator. My joint work with him on almost Einstein metrics is very important in establishing the main technical result in this paper. I also like to thank Chi Li, J. Song and Z.L. Zhang for many useful discussions
in last few years. I am also grateful to Weiyue Ding with whom I had a joint paper \cite{dingtian92} on generalized Futaki invariants. This paper played a very important role in my introducing the K-stability in \cite{tian97}.

\section{Smoothing conic K\"ahler-Einstein metrics}

In this section, we address the question: {\it Can one approximate a conic K\"ahler-Einstein metrics by smooth K\"ahler metrics with Ricci curvature bounded from below}?
For the sake of this paper, we confine ourselves to the case of positive scalar curvature. Our approach can be adapted to other cases where the scalar curvature is non-positive. In fact, the proof is even simpler.

Let $\omega$ be a conic K\"ahler-Einstein metric on $M$ with cone angle $2\pi \beta$ along $D$, where $D$ is a smooth divisor whose Poincare dual is equal to
$\lambda\, c_1(M)$, in particular, $\omega$ satisfies \eqref{eq:conic-1} for $\mu = 1- (1-\beta) \lambda > 0$. For any smooth K\"ahler metric $\omega_0$ with
$[\omega_0]= 2\pi c_1(M)$, we
can write $\omega = \omega_0 + \sqrt{-1}\, \partial\bar\partial \varphi$ for some smooth function $\varphi$ on $M\backslash D$.
Note that $\varphi$ is H\"older continuous
on $M$. Define $h_0$ by
$${\rm Ric}(\omega_0) \,-\, \omega_0\,=\, \sqrt{-1}\, \partial \bar \partial h_0,~~~\int_M (e^{h_0} - 1 ) \,\omega^n_0 \,=\,0.$$
Note that the first equation above is equivalent to
$${\rm Ric}(\omega_0) \,=\, \mu \,\omega_0\,+\,2\pi (1-\beta) [D]\, +\,  \sqrt{-1}\, \partial \bar \partial (h_0 - (1-\beta)\, \log ||S||^2_0 ),$$
where $S$ is a holomorphic section of $K_{M}^{-\lambda}$ defining $D$ and $||\cdot||_0$ is a Hermitian norm on $K^{-\lambda}_M$ with $\lambda \,\omega_0$ as
its curvature. For convenience, we assume that
$$\sup_M ||S||_0 = 1.$$
If $\omega_\beta$ and $h_\beta$ are those in \eqref{eq:conic-2}, then modulo a constant,
$$h_\beta\,=\, h_0 \,-\, (1-\beta) \log ||S||^2_0 \,-\, \log\left (\frac{\omega_\beta^n}{\omega_0^n}\right)\,-\,\mu\,\psi_\beta,$$
where $\omega_\beta \,=\,\omega_0\,+\, \sqrt{-1}\,\partial\bar\partial \psi_\beta$.

It follows from \eqref{eq:conic-1}
\begin{equation}
\label{eq:2-1}
(\omega_0 + \sqrt{-1}\, \partial\bar\partial \varphi )^n \,=\, e^{h_0 - (1-\beta) \log ||S||^2_0 + a_\beta - \mu \varphi} \,\omega^n_0,
\end{equation}
where $a_\beta$ is chosen according to
$$\int_M \left ( e^{h_0 - (1-\beta) \log ||S||^2_0 + a_\beta}\,-\, 1 \right )\, \omega^n_0 \,=\,0.$$
Clearly, $a_\beta$ is uniformly bounded so long as $\beta\,\ge\, \beta_0 \,>\,0$.

The Lagrangian $\bF_{\omega_0, \mu} (\varphi)$ of \eqref{eq:2-1} is given by
\begin{equation}
\label{eq:func-1}
\bJ_{\omega_0} (\varphi)\,-\, \frac{1}{V} \int_M \varphi \,\omega_0^n \,-\,
\frac{1}{\mu} \,\log \left ( \frac{1}{V} \int_M e^{h_0 - (1-\beta) \log ||S||^2_0 + a_\beta - \mu \varphi }\, \omega^n_0\right),
\end{equation}
where $V = \int_M \omega_0^n$ and
\begin{equation}
\label{eq:func-2}
\bJ_{\omega_0} (\varphi)\,=\, \frac{1}{V}\, \sum_{i=0}^{n-1} {i+1\over n+1} \int_M
\sqrt{-1} \, \partial \varphi\wedge \overline{\partial} \varphi \wedge
\omega^i_0\wedge \omega_\varphi^{n-i-1},
\end{equation}
where $\omega_\varphi= \omega_0 + \sqrt{-1} \,\partial\bar\partial \varphi$. Note that $\bF_{\omega_0,\mu}$ is well-defined for any continuous
function $\varphi$.

Let us recall the following result
\begin{theo}
\label{th:bo}
If $\omega = \omega_\varphi$ is a conic K\"ahler-Einstein with conic angle $2\pi \beta$ along $D$, then $\varphi$ attains the minimum of the functional
$\bF_{\omega_0,\mu}$ on the space $\cK_{\beta} (M,\omega_0)$ which consists of all smooth functions $\psi$ on $M\backslash D$ such that $\omega_\psi$ is
a conic K\"ahler metric with angle $2\pi \beta$ along $D$. In particular, $\bF_{\omega_0,\mu}$ is bounded from below.
\end{theo}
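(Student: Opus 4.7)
\medskip
\noindent\textbf{Proof proposal.} The plan is to adopt the Bando-Mabuchi strategy in its conic incarnation: prove that $\bF_{\omega_0,\mu}$ is convex along weak geodesics in the space of $\omega_0$-plurisubharmonic potentials and combine this with the fact that any $\omega=\omega_\varphi\in\cE(\lambda,\beta)$ is a critical point of $\bF_{\omega_0,\mu}$. First I would rewrite
$$
\bF_{\omega_0,\mu}(\varphi) \,=\, -\,E(\varphi)\,-\,\frac{1}{\mu}\log\left(\frac{1}{V}\int_M e^{f-\mu\varphi}\,\omega_0^n\right),
$$
where $f=h_0-(1-\beta)\log||S||_0^2+a_\beta$ and $E(\varphi)=\frac{1}{V(n+1)}\sum_{j=0}^n\int_M\varphi\,\omega_\varphi^j\wedge\omega_0^{n-j}$ is the Monge-Amp\`ere energy, using the identity $\bJ_{\omega_0}(\varphi)-\frac{1}{V}\int_M\varphi\,\omega_0^n=-E(\varphi)$. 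A direct computation of the first variation then identifies the Euler-Lagrange equation of $\bF_{\omega_0,\mu}$ with \eqref{eq:2-1}, so that any conic K\"ahler-Einstein potential $\varphi\in\cK_\beta(M,\omega_0)$ is indeed a critical point.

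Given an arbitrary $\psi\in\cK_\beta(M,\omega_0)$, I would connect $\varphi$ to $\psi$ by the Chen-Mabuchi-Semmes weak geodesic $\varphi_t$, $t\in[0,1]$, realized as the upper envelope of $S^1$-invariant $\omega_0$-plurisubharmonic functions on $M\times\{1\le|\tau|\le e\}$ with the prescribed boundary values at $|\tau|=1,e$. This envelope is $C^{1,1}$ in the weak sense of Chen and satisfies the homogeneous complex Monge-Amp\`ere equation on the product; as an immediate consequence, $t\mapsto E(\varphi_t)$ is affine in $t$, so the first contribution $-E(\varphi_t)$ is trivially convex.

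The heart of the argument is the convexity of $t\mapsto -\frac{1}{\mu}\log\int_M e^{f-\mu\varphi_t}\,\omega_0^n$. This is an application of Berndtsson's theorem on the plurisubharmonic variation of fibre integrals: the $S^1$-invariant weight $\Phi(z,\tau):=-f(z)+\mu\,\varphi_{\log|\tau|}(z)$, once corrected by a background psh potential for $\mu\omega_0$ on $M\times\{1<|\tau|<e\}$, defines a plurisubharmonic metric on a trivial line bundle, whence Berndtsson yields that $\tau\mapsto -\log\int_M e^{-\Phi(\cdot,\tau)}\,\omega_0^n$ is plurisubharmonic in $\tau$, i.e.\ convex in $t=\log|\tau|$. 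The principal obstacle, and the essential technical point of the conic case, is the logarithmic pole $-(1-\beta)\log||S||_0^2$ in the weight $f$: Berndtsson's theorem in its classical form requires locally bounded weights, so one regularizes by replacing $||S||_0^2$ with $||S||_0^2+\epsilon$, applies Berndtsson to the smooth approximants, and passes to the limit $\epsilon\to 0^+$. The limiting step is justified by the uniform $L^p$-integrability of $||S||_0^{-2(1-\beta)}$ against $\omega_0^n$, which is ensured by the cone-angle lower bound $\beta\ge\beta_0>1-\lambda^{-1}$.

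Combining the affineness of $-E(\varphi_t)$ with the convexity of the logarithmic term, $t\mapsto\bF_{\omega_0,\mu}(\varphi_t)$ is convex on $[0,1]$. Since $\varphi_0=\varphi$ is a critical point, the right derivative at $t=0$ vanishes, and convexity then forces $\bF_{\omega_0,\mu}(\varphi_t)\ge\bF_{\omega_0,\mu}(\varphi)$ for all $t\in[0,1]$. Specializing to $t=1$ gives $\bF_{\omega_0,\mu}(\psi)\ge\bF_{\omega_0,\mu}(\varphi)$. Hence $\varphi$ is a minimizer of $\bF_{\omega_0,\mu}$ on $\cK_\beta(M,\omega_0)$, and the functional is bounded below by the finite value $\bF_{\omega_0,\mu}(\varphi)>-\infty$.
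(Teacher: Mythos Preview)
The paper does not supply a self-contained proof of this statement; immediately after stating the theorem it simply cites \cite{berndtsson} and remarks that an alternative proof could be obtained by extending the arguments of \cite{dingtian91} to conic K\"ahler metrics. Your proposal is precisely a sketch of Berndtsson's argument---convexity of the Ding-type functional $\bF_{\omega_0,\mu}$ along weak geodesics via the plurisubharmonic variation of fibrewise integrals, combined with the fact that a conic K\"ahler--Einstein potential is a critical point---so it coincides with the approach the paper invokes by reference.
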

One can find its proof in \cite{berndtsson}. An alternative proof may be given by extending the arguments in \cite{dingtian91} to conic K\"ahler metrics.

\begin{coro}
\label{coro:bo}
If $\mu < 1$, then there are $\epsilon > 0$ and $C_\epsilon > 0$, which may depend on $\omega$ and $\mu$, such that for any $\psi \in \cK_\beta(M,\omega_0)$,
we have for any $t\in (0,\mu]$\footnote{The corresponding $\beta_t$ is defined by $(1-t) = (1-\beta_t)\lambda$.}
\begin{equation}
\label{eq:func-3}
\bF_{\omega_0, t} (\psi )\,\ge \, \epsilon \,\bJ_{\omega_0} (\psi)\,-\, C_\epsilon.
\end{equation}
\end{coro}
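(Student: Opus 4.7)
My plan is to combine Theorem \ref{th:bo}, which gives the baseline bound $\bF_{\omega_0,\mu}(\psi)\geq-C_0:=-\bF_{\omega_0,\mu}(\varphi)$ for every $\psi\in\cK_\beta(M,\omega_0)$, with a H\"older interpolation in the parameter $t$ and a log-$\alpha$-invariant estimate in the conic setting that exploits the strict subcriticality $\mu<1$.

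For $t\in(0,\mu)$, pick an auxiliary parameter $t_0\in(0,\mu)$ and a weight $\theta\in[0,1]$ such that $t=\theta\mu+(1-\theta)t_0$. A direct computation, using the relation $1-\beta=(1-\mu)/\lambda$, gives the pointwise identity
\begin{align*}
&h_0-\tfrac{1-t}{\lambda}\log||S||_0^2+a_{\beta_t}-t\psi\\
&\quad=\theta\bigl(h_0-(1-\beta)\log||S||_0^2+a_\beta-\mu\psi\bigr)+(1-\theta)\bigl(h_0-\tfrac{1-t_0}{\lambda}\log||S||_0^2+a_{\beta_{t_0}}-t_0\psi\bigr)+c(\theta,t_0),
\end{align*}
where $c(\theta,t_0)=a_{\beta_t}-\theta a_\beta-(1-\theta)a_{\beta_{t_0}}$ is bounded and vanishes at $\theta=0,1$. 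H\"older's inequality with conjugate exponents $1/\theta$ and $1/(1-\theta)$ transforms this into the functional inequality
$$\bF_{\omega_0,t}(\psi)\geq\alpha\,\bF_{\omega_0,\mu}(\psi)+(1-\alpha)\,\bF_{\omega_0,t_0}(\psi)-c(\theta,t_0)/t,$$
with $\alpha=\theta\mu/t\in[0,1]$ and $1-\alpha=(1-\theta)t_0/t$. Substituting $\bF_{\omega_0,\mu}\geq-C_0$ and using that $c(\theta,t_0)/t$ is bounded on $[t_0,\mu]$, the problem reduces to establishing the fixed-parameter properness bound $\bF_{\omega_0,t_0}(\psi)\geq\epsilon_0\bJ_{\omega_0}(\psi)-C_2$ for some $t_0>0$ and $\epsilon_0>0$.

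This remaining bound is a log-$\alpha$-invariant / conic Moser-Trudinger estimate for the pair $(M,(1-\beta_{t_0})D)$: the condition $(1-\beta_{t_0})\lambda=1-t_0<1$ places $||S||_0^{-2(1-\beta_{t_0})}$ in $L^p(\omega_0^n)$ for some $p>1$, which yields the needed Moser-Trudinger inequality and hence $\epsilon_0>0$. Extending the properness bound from $[t_0,\mu]$ down to $(0,t_0]$ requires one further H\"older iteration between $t$ and a smaller parameter, which goes through because $(1-s)/\lambda<1$ throughout. The main obstacle of the argument is the conic Moser-Trudinger estimate itself with a strictly positive gain $\epsilon_0$; it is where the hypothesis $\mu<1$ is consumed essentially, since the strict inequality provides the $L^p$ headroom needed for the mild singularity $||S||_0^{-2(1-\beta_{t_0})}$ not to saturate the Moser-Trudinger / $\alpha$-invariant inequality.
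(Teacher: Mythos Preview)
Your interpolation scheme is set up correctly, and the H\"older argument between parameters does give
\[
\bF_{\omega_0,t}(\psi)\ \ge\ \alpha\,\bF_{\omega_0,\mu}(\psi)+(1-\alpha)\,\bF_{\omega_0,t_0}(\psi)-C,
\qquad 1-\alpha=\frac{(1-\theta)t_0}{t}=\frac{(\mu-t)\,t_0}{(\mu-t_0)\,t}.
\]
The problem is the endpoint behavior. As $t\to\mu^-$ the coefficient $1-\alpha$ tends to $0$, so the gain $(1-\alpha)\epsilon_0\,\bJ_{\omega_0}(\psi)$ that you extract from the proper term at $t_0$ degenerates, and at $t=\mu$ itself you get back only the bare lower bound from Theorem~\ref{th:bo}. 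Hence your argument produces an $\epsilon(t)$ that vanishes at $t=\mu$, not the \emph{uniform} $\epsilon>0$ on $(0,\mu]$ that the corollary asserts. In particular you never obtain properness at $t=\mu$, which is precisely the case needed in the application.

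The paper closes this gap by one extra step that you are missing: using the deformation (openness) result of \cite{donaldson10} together with the assumption that $M$ has no nontrivial holomorphic vector fields, one produces a conic K\"ahler--Einstein metric at a slightly larger parameter $\bar\mu=\mu+\delta$ (here $\mu<1$ guarantees the corresponding angle stays below $2\pi$). Theorem~\ref{th:bo} then gives $\bF_{\omega_0,\bar\mu}\ge -C$. Now one runs your H\"older interpolation between $t_0$ and $\bar\mu$ instead of between $t_0$ and $\mu$; for every $t\in(0,\mu]$ the weight on the $t_0$-term is bounded below by $\dfrac{(\bar\mu-\mu)\,t_0}{(\bar\mu-t_0)\,\mu}>0$, and a uniform $\epsilon>0$ follows. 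This ``overshoot past $\mu$'' is the essential missing idea in your proposal.
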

\begin{proof}
It follows from the arguments of using the log-$\alpha$-invariant in \cite{lisun} that $\bF_{\omega_0,t}$ is proper for $t> 0$ sufficiently small.
Let $\omega=\omega_\varphi$ be the conic K\"ahler-Einstein metric with angle $\beta$ along $D$. Then $\varphi$ satisfies \eqref{eq:2-1}. Since $M$ does not
admit non-zero holomorphic fields,\footnote{Even if $M$ does have non-trivial holomorphic fields, there should be no holomorphic fields which are tangent to $D$.
This is sufficient for rest of the proof.}it follows from \cite{donaldson10} that \eqref{eq:2-1} has a solution $\bar \varphi$ when $\mu$ is replaced by
$\bar\mu\,=\,\mu+\delta$ for $\delta >$ sufficiently small. Hence, by Theorem \ref{th:bo},
$\bF_{\omega_0,\bar{\mu}} $ is bounded from below. Then this corollary follows from Proposition 1.1 in
\cite{lisun}\footnote{In \cite{lisun}, the reference metric $\omega_0$ is a conic K\"ahler metric while ours is a smooth metric, however, the arguments apply with
slight modification.}

\end{proof}

Now we consider the following equation:
\begin{equation}
\label{eq:2-2}
(\omega_0 + \sqrt{-1}\, \partial\bar\partial \varphi )^n \,=\, e^{h_\delta - \mu \varphi}\, \omega^n_0,
\end{equation}
where
$$h_\delta \,=\, h_0 - (1-\beta) \log (\delta + ||S||^2_0) + c_\delta$$
for some constant $c_\delta$ determined by
$$\int_M \left( e^{ h_0 - (1-\beta) \log (\delta + ||S||^2_0) + c_\delta} - 1 \right ) \,\omega_0^n \,=\,0.$$
Clearly, $c_\delta$ is uniformly bounded. If $\varphi_\delta$ is a solution, then we get a smooth K\"ahler metric
$$\omega_\delta\, =\, \omega_0\, +\, \sqrt{-1}\, \partial\bar\partial \varphi_\delta.$$
Its Ricci curvature is given by
$${\rm Ric} (\omega_\delta) \,=\, \mu \,\omega_\delta \,+\, \frac{\delta (1-\beta) \lambda }{\delta + ||S||^2_0}
\,\omega_0\,+\, \delta(1-\beta)\,\frac{DS\wedge \overline{DS}}{(\delta + ||S||^2_0)^2},
$$
where $DS$ denotes the covariant derivative of $S$ with respect to the Hermitian metric $||\cdot||_0$. In particular, the Ricci curvature of $\omega_\delta$
is greater than $\mu$ whenever $\beta < 1$ and $\delta > 0$.\footnote{This observation is crucial in our approximating the conic K\"ahler-Einstein
metric $\omega$ and first appeared in the slides of my talk at SBU on October 25, 2012. The arguments in establishing the existence of $\omega_\delta$
is known for long and identical to the one I used in \cite{tian97}.}

We will solve \eqref{eq:2-2} for such $\omega_\delta$'s and show that they converge to the conic
K\"ahler-Einstein metric $\omega$ in a suitable sense.

To solve \eqref{eq:2-2}, we use the standard continuity method:
\begin{equation}
\label{eq:2-3}
(\omega_0 + \sqrt{-1}\, \partial\bar\partial \varphi )^n \,=\, e^{h_{\delta} - t \varphi}\, \omega^n_0.
\end{equation}
Define $I_\delta$ to be the set of $t\in [0,\mu]$ for which \eqref{eq:2-3} is solvable. By the Calabi-Yau theorem, $0\in I_\delta$.

We may assume $\mu < 1$, otherwise, we have nothing more to do.

\begin{lemm}
\label{lemm:2-1}
The interval $I_\delta$ is open.
\end{lemm}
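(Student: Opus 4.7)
My plan is to apply the implicit function theorem in H\"older spaces, following the standard Aubin continuity-method template. Fix $t_0 \in I_\delta$ and let $\varphi_{t_0}$ solve \eqref{eq:2-3} at parameter $t_0$, so that $\omega_{t_0} = \omega_0 + \sqrt{-1}\,\partial\bar\partial\varphi_{t_0}$ is a smooth positive K\"ahler metric on the compact manifold $M$. Consider the nonlinear operator
\[
F(\varphi, t) \,=\, \log\frac{(\omega_0 + \sqrt{-1}\,\partial\bar\partial\varphi)^n}{\omega_0^n} \,-\, h_\delta \,+\, t\varphi,
\]
defined on $\{\varphi \in C^{2,\alpha}(M): \omega_0 + \sqrt{-1}\,\partial\bar\partial\varphi > 0\} \times [0,\mu]$ with values in $C^{0,\alpha}(M)$. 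Then $F(\varphi_{t_0}, t_0) = 0$ and the partial derivative in $\varphi$ is the linear elliptic operator $D_\varphi F|_{(\varphi_{t_0},t_0)}(\psi) = \Delta_{\omega_{t_0}}\psi + t_0\psi$. For $t_0 > 0$, openness at $t_0$ will follow from the implicit function theorem once this operator is shown to be an isomorphism $C^{2,\alpha}(M) \to C^{0,\alpha}(M)$; the degenerate endpoint $t_0 = 0$ is standard and can be handled by normalizing $\int \varphi\, \omega_0^n = 0$ and working on the mean-zero subspace, where $\Delta_{\omega_0}$ is already invertible.

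The crux is a strict Ricci lower bound for $\omega_{t_0}$. Applying $\sqrt{-1}\,\partial\bar\partial\log$ to both sides of \eqref{eq:2-3} and using ${\rm Ric}(\omega_0) = \omega_0 + \sqrt{-1}\,\partial\bar\partial h_0$ together with $h_0 - h_\delta = (1-\beta)\log(\delta + ||S||_0^2) - c_\delta$, one obtains, via the same identity that the author records at $t = \mu$ in the paragraph preceding this lemma,
\[
{\rm Ric}(\omega_{t_0}) \,=\, t_0\,\omega_{t_0} \,+\, (\mu - t_0)\,\omega_0 \,+\, \frac{\delta(1-\beta)\lambda}{\delta + ||S||_0^2}\,\omega_0 \,+\, \delta(1-\beta)\,\frac{DS\wedge \overline{DS}}{(\delta + ||S||_0^2)^2}.
\]
Since $\beta < 1$, $\delta > 0$, and $t_0 \le \mu$, all three additional terms are nonnegative, and the third term on the right is uniformly bounded below by a positive multiple of $\omega_0$ on $M$. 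Because $\omega_0$ and the smooth positive metric $\omega_{t_0}$ are commensurate on the compact manifold $M$, there exists $\eta > 0$ with ${\rm Ric}(\omega_{t_0}) \ge (t_0 + \eta)\,\omega_{t_0}$.

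By the standard Bochner--Lichnerowicz eigenvalue estimate applied to the compact K\"ahler manifold $(M,\omega_{t_0})$, every positive eigenvalue of $-\Delta_{\omega_{t_0}}$ is then at least $t_0 + \eta > t_0$, so $-t_0$ is not in the spectrum of $\Delta_{\omega_{t_0}}$. As $\Delta_{\omega_{t_0}} + t_0$ is an elliptic, self-adjoint Fredholm operator of index zero on the closed manifold $M$, trivial kernel forces it to be an isomorphism between the relevant H\"older spaces, and the implicit function theorem then yields a smooth family of solutions $\varphi_t$ of \eqref{eq:2-3} for $t$ in a neighborhood of $t_0$. The main point --- and the very reason for introducing the regularization \eqref{eq:2-2} --- is the strict positive term $\frac{\delta(1-\beta)\lambda}{\delta + ||S||_0^2}\omega_0$ coming from $\delta > 0$: this ensures that ${\rm Ric}(\omega_{t_0})$ is strictly greater than $t_0\omega_{t_0}$ uniformly in $t_0 \in [0,\mu]$, keeping the linearization invertible throughout $I_\delta$; without it, at $t_0 = \mu$ one would only obtain ${\rm Ric}(\omega_{t_0}) \ge t_0\omega_{t_0}$ and the argument would collapse at the endpoint.
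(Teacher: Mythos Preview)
Your proof is correct and follows essentially the same argument as the paper: compute ${\rm Ric}(\omega_{t_0})$, observe that the extra terms coming from $(\mu-t_0)\omega_0$ and the $\delta$-regularized logarithm make ${\rm Ric}(\omega_{t_0}) > t_0\,\omega_{t_0}$, invoke the Bochner identity to push the first nonzero eigenvalue of $-\Delta_{\omega_{t_0}}$ strictly above $t_0$, and conclude by the implicit function theorem that the linearization $\Delta_{t_0}+t_0$ is invertible. Your treatment is somewhat more explicit---you set up the map $F$ in H\"older spaces, isolate the case $t_0=0$, and spell out the commensurability step $\omega_0 \ge c\,\omega_{t_0}$ to extract a uniform $\eta>0$---but the logic and the key Ricci computation are identical to the paper's.
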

\begin{proof}
If $t\in I_\delta$ and $\varphi$ is a corresponding solution of \eqref{eq:2-3}, then the Ricci curvature of the associated metric $\omega_\varphi$ is equal to
$$ t \,\omega_\varphi \,+\, \left ( (\mu-t) + \frac{\delta (1-\beta) \lambda }{\delta + ||S||^2_0}
\right)\,\omega_0\,+\, \delta(1-\beta)\,\frac{DS\wedge \overline{DS}}{(\delta + ||S||^2_0)^2}.$$
So ${\rm Ric}(\omega_\varphi)  \,> \, t \,\omega_\varphi$. By the well-known Bochner identity, the first non-zero eigenvalue of $\omega_\varphi$ is strictly bigger than $t$. It implies that the linearization $\Delta_t + t $ of \eqref{eq:2-3} at $\varphi$ is invertible,
where $\Delta_t$ is the Laplacian of $\omega_\varphi$.
By the Implicit Function Theorem, \eqref{eq:2-3} is solvable for any $t'$ close to $t$, so $I_\delta$ is open.
\end{proof}

Therefore, we only need to prove that $I_\delta$ is closed. This is amount to a priori estimates for any derivatives of the solutions of \eqref{eq:2-3}.
As usual, by using known techniques in deriving higher order estimates, we need to bound only $J_{\omega_0}(\varphi)$ for any solution $\varphi$ of \eqref{eq:2-3}
(cf. \cite{tian97}, \cite{tian98}). The following arguments are identical to those for proving that the properness of ${\bf F}_{\omega_0,1}$ implies the existence
of the K\"ahler-Einstein metrics in Theorem 1.6 of \cite{tian97}.

We introduce
\begin{equation}
\label{eq:func-4}
\bF_{\delta, t} (\varphi)\,=\, \bJ_{\omega_0} (\varphi)\,-\, \frac{1}{V} \,\int_M \varphi \,\omega_0^n \,-\,
\frac{1}{t} \,\log \left ( \frac{1}{V} \int_M e^{h_\delta  - t \varphi } \,\omega^n_0\right).
\end{equation}
This is the Lagrangian of \eqref{eq:2-3}.

\begin{lemm}
\label{lemm:2-2}
There is a constant $C$ independent of $t$ satisfying: For any smooth family of $\varphi_s$ ($s \in [0,t]$) such that $\varphi=\varphi_t$ and
$\varphi_s$ solves \eqref{eq:2-3} with parameter $s$, we have
$$\bF_{\delta, t} (\varphi) \,\le\, C.$$
\end{lemm}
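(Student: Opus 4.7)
I would prove the bound by differentiating $\bF_{\delta,s}(\varphi_s)$ along the continuity path and integrating, following exactly the argument in Theorem 1.6 of \cite{tian97}.

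First, since $\varphi_s$ satisfies \eqref{eq:2-3}, one has $\omega_{\varphi_s}^n = e^{h_\delta - s\varphi_s}\omega_0^n$, and both sides integrate to $V$. Consequently, the logarithmic term in $\bF_{\delta,s}(\varphi_s)$ vanishes and
\[
\bF_{\delta,s}(\varphi_s) \,=\, \bJ_{\omega_0}(\varphi_s) \,-\, \frac{1}{V}\int_M \varphi_s\,\omega_0^n.
\]
Next, $\varphi_s$ is a critical point of $\bF_{\delta,s}$ (its Euler--Lagrange equation is precisely \eqref{eq:2-3}), so by the chain rule the total derivative along the path equals the partial derivative in the parameter $s$. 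After simplification, using $\int_M e^{h_\delta - s\varphi_s}\omega_0^n = V$, this becomes
\[
\frac{d}{ds}\bF_{\delta,s}(\varphi_s) \,=\, -\,\frac{1}{V}\int_M \dot\varphi_s\,\omega_{\varphi_s}^n,
\]
where $\dot\varphi_s$ solves the linearized equation $\Delta_{\varphi_s}\dot\varphi_s + s\dot\varphi_s = -\varphi_s$ obtained by differentiating $\log(\omega_{\varphi_s}^n/\omega_0^n) = h_\delta - s\varphi_s$ in $s$.

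The crucial third step uses the strict Ricci lower bound ${\rm Ric}(\omega_{\varphi_s}) > s\,\omega_{\varphi_s}$ derived right before Lemma \ref{lemm:2-1} from the positivity of the extra $\delta$-terms. By the Bochner/Lichnerowicz inequality this gives a spectral gap $\lambda_1(-\Delta_{\varphi_s}) > s$ on functions of zero mean, so that $(\Delta_{\varphi_s} + s)$ is invertible on this subspace and $\dot\varphi_s$ is controlled by $\varphi_s$. The remainder of the estimate -- showing $\bF_{\delta,s}(\varphi_s)$ is bounded above along the path -- is the identical manipulation of \cite{tian97}, with the smooth uniformly bounded potential $h_\delta$ playing the role of $h$. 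Combined with the trivial bound at $s = 0$, where $\varphi_0$ is the Calabi--Yau solution of $(\omega_0 + \sqrt{-1}\,\partial\bar\partial\varphi_0)^n = e^{h_\delta}\omega_0^n$ with uniform $C^\infty$ estimates (depending only on $\delta$, not on $t$), this yields $\bF_{\delta,t}(\varphi_t) \le C$.

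\textbf{Main obstacle.} The bounded-variation estimate in the third step, which requires a delicate application of the Bochner formula. The role of the smoothing parameter $\delta > 0$ is essential: without it, the Ricci lower bound on $\omega_{\varphi_s}$ would be non-strict at $s = \mu$ and the invertibility/spectral-gap argument would break down. Everything else --- higher-order estimates starting from a $\bJ_{\omega_0}$-bound --- is standard.
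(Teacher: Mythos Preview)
Your overall framework---differentiate $\bF_{\delta,s}(\varphi_s)$ along the path and integrate---is exactly the paper's, and your first two steps are correct. Your derivative formula $-\frac{1}{V}\int_M \dot\varphi_s\,\omega_{\varphi_s}^n$ agrees with the paper's $\frac{1}{sV}\int_M \varphi_s\,\omega_{\varphi_s}^n$ once you integrate the linearized equation $\Delta_{\varphi_s}\dot\varphi_s = -\varphi_s - s\dot\varphi_s$ to get $\int_M (s\dot\varphi_s + \varphi_s)\,\omega_{\varphi_s}^n = 0$.

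Where your description goes off track is in step~3. The paper (and \cite{tian97}) does \emph{not} bound the derivative via a spectral-gap/invertibility argument for $(\Delta_{\varphi_s}+s)$ acting on $\dot\varphi_s$. That route would fail to give a \emph{uniform} bound, since $\lambda_1(-\Delta_{\varphi_s})-s>0$ is strict but not uniformly so. Instead, the paper rewrites the derivative as $\frac{1}{sV}\int_M \varphi_s\,\omega_{\varphi_s}^n$ and bounds this directly: from ${\rm Ric}(\omega_{\varphi_s})\ge s_0>0$ one gets a uniform Sobolev constant, and Moser iteration applied to $\Delta_{\varphi_s}(-\varphi_s)\ge -n$ yields
\[
-\inf_M\varphi_s \,\le\, -\frac{1}{V}\int_M \varphi_s\,\omega_{\varphi_s}^n \,+\, C'.
\]
Since $\inf_M\varphi_s\le 0$ (from $\int_M e^{h_\delta-s\varphi_s}\omega_0^n=V$), this gives $\frac{d}{ds}\bF_{\delta,s}(\varphi_s)\le s_0^{-1}C'$ for $s\ge s_0$, and the lemma follows by integration.

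Consequently, your ``main obstacle'' is misidentified. The strict inequality ${\rm Ric}(\omega_{\varphi_s})>s\,\omega_{\varphi_s}$ coming from $\delta>0$ is what drives \emph{openness} (Lemma~\ref{lemm:2-1}), not this closedness bound. For Lemma~\ref{lemm:2-2} one only needs ${\rm Ric}(\omega_{\varphi_s})\ge s_0$ for some fixed $s_0>0$, which holds along the path regardless of whether the extra $\delta$-terms are present.
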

\begin{proof}
First we observe
\begin{equation}
\label{eq:derivative}
\bF_{\delta, s} (\varphi_s)\,=\, \bJ_{\omega_0} (\varphi_s)\,-\, \frac{1}{V} \int_M \varphi_s \,\omega_0^n .
\end{equation}
So its derivative on $s$ is given by
$$\frac{d}{ds}\, \bF_{\delta, s} (\varphi_s) \,=\,\frac{1}{s V} \,\int_M \varphi_s \,(\omega_0 + \sqrt{-1}\, \partial\bar\partial \varphi_s)^n.$$
Here we have used the fact
$$\int_M (\dot{\varphi_s}\,+\,s\, \varphi_s) \,(\omega_0 + \sqrt{-1}\, \partial\bar\partial \varphi_s)^n\,=\,0$$
This follows from differentiating \eqref{eq:2-3} on $s$.

We will show that the derivative in \eqref{eq:derivative} is bounded from above. Without loss of the generality, we may assume that $s \ge s_0 > 0$. Then we have
$${\rm Ric}(\omega_{\varphi_s} )\,\ge \, s\,\omega_{\varphi_s} \,\ge\, s_0\,\omega_{\varphi_s},$$
and consequently, the Sobolev constant of $\omega_{\varphi_s}$ is uniformly bounded. By the standard Moser iteration, we have (cf. \cite{tian98})
$$ - \inf_M \varphi_s \,\le \, -\, \frac{1}{ V} \,\int_M \varphi_s \,(\omega_0 + \sqrt{-1} \partial\bar\partial \varphi_s)^n\,+\,C'.$$
Since $\inf_M \varphi_s \,\le\, 0$, we get
$$\frac{d }{ds}\,\bF_{\delta, s} (\varphi_s) \,\le \, s_0^{-1} C'.$$
The lemma follows from integration along $s$.
\end{proof}

Next we observe for any $t\le \mu$
$$h_\delta \,= \, h_0 - (1-\beta) \log (\delta + ||S||^2_0) + c_\delta \, \le\, h_0 - (1-\beta_t) \log ||S||^2_0 + c_\delta.$$
Hence, by Corollary \ref{coro:bo}, we have
$$\bF_{\delta, t} (\psi) \,\ge\,  \epsilon \,\bJ_{\omega_0} (\psi)\,-\, C_\epsilon\,-\, \frac{ c_\delta - a_\beta }{t}.$$
Since both $c_\delta $ and $a_\beta $ are uniformly bounded, combined with Lemma \ref{lemm:2-2}, we conclude that
$J_{\omega_0}(\varphi)$ is uniformly bounded for any solution $\varphi$ of \eqref{eq:2-3}.\footnote{Here we also used the fact that $J_{\omega_0}(\varphi)$ is
automatically bounded for $t> 0$ sufficiently small.} Thus we have proved

\begin{theo}
\label{th:2-1}
For any $\delta > 0$, \eqref{eq:2-2} has a unique smooth solution $\varphi_\delta$. Consequently, we have a K\"ahler metric
$\omega_\delta=\omega_0+\sqrt{-1}\,\partial\bar\partial \varphi_\delta$ with Ricci curvature greater than or equal to $\mu$.
\end{theo}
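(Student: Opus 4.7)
The plan is to apply the standard continuity method to \eqref{eq:2-3} with parameter $t\in[0,\mu]$: show $I_\delta=[0,\mu]$, and take $\varphi_\delta:=\varphi_\mu$. Since $\delta>0$ makes $h_\delta$ smooth on all of $M$, the Calabi-Yau theorem gives $0\in I_\delta$, and Lemma \ref{lemm:2-1} gives openness; the entire issue is closedness, which by the standard Yau/Aubin $C^0\Rightarrow C^{2,\alpha}$ machinery for smooth complex Monge-Amp\`ere equations reduces to a uniform bound on $\bJ_{\omega_0}(\varphi_t)$ along solutions $\varphi_t$ of \eqref{eq:2-3}.

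I would obtain the $\bJ$-bound by squeezing $\bF_{\delta,t}(\varphi_t)$ from both sides. Lemma \ref{lemm:2-2} supplies the upper bound along the continuity path. For the lower bound I would exploit the pointwise comparison
$$h_\delta \;\le\; h_0 - (1-\beta_t)\log ||S||_0^2 + c_\delta,\qquad t\le\mu,$$
(valid because $\delta+||S||_0^2\ge ||S||_0^2$ and $\beta_t\le\beta$) to dominate $\bF_{\delta,t}$ from below by the conic Lagrangian to which Corollary \ref{coro:bo} applies, yielding $\bF_{\delta,t}(\varphi_t)\ge \epsilon\,\bJ_{\omega_0}(\varphi_t) - C_\epsilon - (c_\delta - a_{\beta_t})/t$. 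Uniform boundedness of $c_\delta$ and $a_{\beta_t}$, combined with the automatic control of $\bJ$ near $t=0$ inherited from the Calabi-Yau starting point, closes the estimate uniformly on $[0,\mu]$. Closedness follows, so $I_\delta=[0,\mu]$, and uniqueness of $\varphi_\delta$ then follows from the Bando-Mabuchi argument applied to the strictly Ricci-positive metric obtained (see the Ricci formula below).

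Finally, the Ricci lower bound is unconditional: differentiating \eqref{eq:2-2} gives the formula already recorded in the excerpt, with both perturbation terms nonnegative for $\beta\le 1$ and $\delta>0$, so ${\rm Ric}(\omega_\delta)\ge \mu\,\omega_\delta$. The main obstacle in this plan is the squeeze on $\bF_{\delta,t}$. The upper bound from Lemma \ref{lemm:2-2} is delicate because it requires a uniform Sobolev constant along the family (supplied by ${\rm Ric}(\omega_{\varphi_s})\ge s_0\omega_{\varphi_s}$ together with Moser iteration on $-\varphi_s$), while the lower bound hinges on the observation that the $\delta$-regularization keeps the right-hand side dominated by conic data at the \emph{same} cone angle, so that Corollary \ref{coro:bo} applies with constants independent of $\delta$. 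Once these two matching estimates are in hand, the remainder of the argument is routine elliptic regularity plus the explicit Ricci computation.
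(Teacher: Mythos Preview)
Your proposal is correct and follows essentially the same approach as the paper: continuity method on \eqref{eq:2-3}, openness from Lemma~\ref{lemm:2-1}, closedness by squeezing $\bF_{\delta,t}(\varphi_t)$ between the upper bound of Lemma~\ref{lemm:2-2} and the lower bound obtained from the pointwise inequality $h_\delta \le h_0 - (1-\beta_t)\log\|S\|_0^2 + c_\delta$ together with Corollary~\ref{coro:bo}. The paper writes the correction term as $(c_\delta - a_\beta)/t$ rather than your $(c_\delta - a_{\beta_t})/t$, but both constants are uniformly bounded so this is immaterial; your mention of Bando--Mabuchi for uniqueness is a small addition the paper leaves implicit.
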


Next we examine the limit of $\omega_\delta$ or $\varphi_\delta$ as $\delta$ tends to $0$.
First we note that for the conic K\"ahler-Einstein metric $\omega$ with cone angle $2\pi \beta$ along $D$ given above, there is a uniform constant $c=c(\omega)$ such that
$\sup_M |\varphi_\delta|\,\le\, c$. Using ${\rm Ric}(\omega_\delta)\, \ge\, \omega_\delta $ and the standard computations, we have
$$\Delta \log {\rm tr}_{\omega_\delta}(\omega_0) \,\ge \, - a \, {\rm tr}_{\omega_\delta}(\omega_0),$$
where $\Delta$ is the Laplacian of $\omega_\delta$ and $a$ is an upper bound of the bisectional curvature of $\omega_0$. If we put
$$u \,=\, {\rm tr}_{\omega_\delta}(\omega_0) - (a+1) \,\varphi_\delta,$$
then it follows from the above
$$ \Delta u \,\ge \, u - n - (a+1)\, c .$$
Hence, we have
$$ u \,\le\, n + (a+1)\, c,$$
this implies
$$ C^{-1} \,\omega_0\,\le \, \omega_\delta ,$$
where $C \,=\,n + 2 (a+1)\, c$. Using the equation \eqref{eq:2-3}, we have
\begin{equation}
\label{eq:2-side-esti}
C^{-1}\,\omega_0 \,\le\,\omega_\delta\,\le \, C'\, (\delta + ||S||^2)^{-(1-\beta)} \,\omega_0,
\end{equation}
where $C'$ is a constant depending only on $a$ and $\omega_0$. Since $\beta > 0$, the above estimate on $\omega_\delta = \omega_0 +
\sqrt{-1}\, \partial\bar\partial \varphi_\delta$ gives the uniform H\"older continuity of $\varphi_\delta$. Furthermore, using the Calabi estimate for the 3rd derivatives and the standard regularity theory, we can prove (cf. \cite{tian98}): For any $ l > 2$ and a compact subset $K \subset M\backslash D$, there is a uniform constant $C_{l, K}$
such that
\begin{equation}
\label{eq:h-d}
||\varphi_\delta||_{C^l(K)}\, \le \, C_{l, K}.
\end{equation}
Then we can deduce from the above estimates:

\begin{theo}
\label{th:GH}
The smooth K\"ahler metrics $\omega_\delta$ converge to $\omega$ in the Gromov-Hausdorff topology on $M$ and in the smooth topology outside $D$.
\end{theo}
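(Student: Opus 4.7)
The plan has three parts: extract a subsequential smooth limit of $\omega_\delta$ on $M\setminus D$ using the local estimates \eqref{eq:h-d}; identify the limit with $\omega$ via uniqueness of the conic K\"ahler--Einstein equation; and finally upgrade this to Gromov--Hausdorff convergence on $M$ by using the upper bound in \eqref{eq:2-side-esti} to show that tubular neighborhoods of $D$ have vanishing $\omega_\delta$-width.

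For the smooth convergence outside $D$, I would combine the uniform $C^0$-bound $\sup_M|\varphi_\delta|\le c$ with the $C^l_{\rm loc}$-estimates \eqref{eq:h-d} and apply Arzel\`a--Ascoli plus a diagonal argument to extract a subsequence $\varphi_{\delta_i}\to\varphi_\infty$ in $C^\infty_{\rm loc}(M\setminus D)$. As $\delta\to 0$, the normalizing constants $c_\delta$ converge to $a_\beta$ (by dominated convergence, since $\|S\|_0^{-2(1-\beta)}$ is integrable for $\beta>0$), so passing to the limit in \eqref{eq:2-2} shows $\varphi_\infty$ solves \eqref{eq:2-1} on $M\setminus D$. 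The two-sided estimate \eqref{eq:2-side-esti} transfers to the limit and certifies that $\omega_0+\sqrt{-1}\,\partial\bar\partial\varphi_\infty$ is a bona fide conic K\"ahler metric of angle $2\pi\beta$ along $D$. By uniqueness of conic K\"ahler--Einstein metrics in this setting (no non-trivial holomorphic fields tangent to $D$, as recorded in the footnote to Corollary \ref{coro:bo}), one has $\varphi_\infty=\varphi$ modulo an additive constant, and that constant is pinned down automatically because the $e^{-\mu\varphi}$ factor on the right-hand side of \eqref{eq:2-1} is incompatible with a nontrivial shift $\varphi\mapsto\varphi+c$. Since the limit is unique, the whole family $\omega_\delta$ converges to $\omega$ in $C^\infty_{\rm loc}(M\setminus D)$.

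For Gromov--Hausdorff convergence on $M$, Myers' theorem applied with ${\rm Ric}(\omega_\delta)\ge\mu\,\omega_\delta$ gives a uniform diameter bound, hence precompactness. The main technical point---and what I expect to be the principal obstacle---is to show that tubular neighborhoods of $D$ collapse uniformly in the $\omega_\delta$-metric. For this I would exploit the upper bound $\omega_\delta\le C'(\delta+\|S\|_0^2)^{-(1-\beta)}\omega_0$: a radial curve in an $\omega_0$-transverse direction starting at $D$ of $\omega_0$-length $r$ satisfies
\[
\mathrm{length}_{\omega_\delta}\ \le\ \sqrt{C'}\int_0^r(\delta+s^2)^{-(1-\beta)/2}\,ds\ \le\ \sqrt{C'}\int_0^r s^{-(1-\beta)}\,ds\ =\ \frac{\sqrt{C'}}{\beta}\,r^{\beta},
\]
the finiteness of the integral at $s=0$ using precisely $\beta>0$; the same bound holds for $\omega$ at $\delta=0$. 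Consequently, for every $\epsilon>0$ there is $r_\epsilon>0$ so that every point of the tube $U_{r_\epsilon}=\{\|S\|_0<r_\epsilon\}$ lies within $\omega_\delta$- and $\omega$-distance $\epsilon/2$ of $\partial U_{r_\epsilon}$, uniformly in small $\delta$. Combined with the $C^\infty_{\rm loc}$-convergence on the compact set $M\setminus U_{r_\epsilon}$---which implies that the $\omega_\delta$-distance matrix of a finite $\omega$-net of $M\setminus U_{r_\epsilon}$ converges uniformly to the $\omega$-one---this produces explicit $O(\epsilon)$-Gromov--Hausdorff approximations between $(M,\omega_\delta)$ and $(M,\omega)$, completing the proof.
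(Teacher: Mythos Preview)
Your treatment of the smooth convergence on $M\setminus D$ is fine and matches what the paper uses implicitly. The gap is in the Gromov--Hausdorff step, specifically in the parenthetical claim that ``$C^\infty_{\rm loc}$-convergence on the compact set $M\setminus U_{r_\epsilon}$ implies that the $\omega_\delta$-distance matrix of a finite $\omega$-net of $M\setminus U_{r_\epsilon}$ converges uniformly to the $\omega$-one.'' Smooth convergence of the metric \emph{tensors} on $M\setminus U_{r_\epsilon}$ controls only the \emph{intrinsic} distance of that region, not the ambient distances $d_{\omega_\delta}(p,q)$ and $d_\omega(p,q)$: minimizing geodesics between points of $M\setminus U_{r_\epsilon}$ can pass through the tube $U_{r_\epsilon}$, and inside the tube the two metrics are not uniformly comparable (in the transverse direction $\omega\sim\|S\|_0^{-2(1-\beta)}\omega_0$ while $\omega_\delta\sim(\delta+\|S\|_0^2)^{-(1-\beta)}\omega_0$). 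Your tube-width bound shows each point of $U_{r_\epsilon}$ is $\epsilon$-close to $\partial U_{r_\epsilon}$, but it does not prevent shortcuts: a geodesic could enter and exit the tube at points that are far apart along $D$, and you have no mechanism to replace that arc by one in $M\setminus U_{r_\epsilon}$ of comparable length, nor to bound the number of such crossings.

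The paper confronts exactly this difficulty and resolves it by a different route. It first passes to a Gromov--Hausdorff limit $(\bar M,\bar d)$ using the uniform Ricci lower bound, identifies $M\setminus D$ isometrically with an open dense set $U\subset\bar M$ via the smooth convergence, and extends to a $1$-Lipschitz surjection $\iota:(M,\omega)\to(\bar M,\bar d)$ (using that $M\setminus D$ is geodesically convex for $\omega$ when the cone angle is $\le 2\pi$). The remaining inequality $\bar d(\iota(p),\iota(q))\ge d_\omega(p,q)$ is proved by contradiction using Gromov's relative volume comparison (\cite{gromov}, p.~523): if every minimizing $\bar d$-geodesic from a ball around $\bar p$ to a ball around $\bar q$ had to cross the limit $\bar D$ of $D$, then for the approximants $(M,\omega_\delta)$ one would get a lower bound on ${\rm Vol}(B_r(q_\delta,\omega_\delta))$ in terms of ${\rm Vol}(\partial T_\delta,\omega_\delta)$ for a thin tube $T_\delta$ around $D$; but the latter can be made arbitrarily small (precisely because $\omega_\delta\to\omega$ smoothly away from $D$ and $\partial T$ has small $\omega$-area), a contradiction. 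This volume-barrier argument is what replaces your unproved distance-matrix claim.
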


\begin{proof}
It suffices to prove the first statement: $\omega_\delta$ converge to $\omega$ in the Gromov-Hausdorff topology.
Since $\omega_\delta$ has Ricci curvature bounded from below by a fixed $\mu > 0$, by the Gromov Compactness Theorem, any sequence of $(M,\omega_\delta)$
has a subsequence converging to a length space $(\bar M,\bar d)$ in the Gromov-Hausdorff topology. We only need to prove that any such a limit
$(\bar M,\bar d)$ coincides with $(M,\omega)$. Without loss of generality, we may assume that $(M,\omega_\delta)$
converge to $(\bar M,\bar d)$ in the Gromov-Hausdorff topology. By the estimates on derivatives in \eqref{eq:h-d},
$\bar M$ contains an open subset $U$ which can be identified with $M\backslash D$, moreover, this identification $\iota: M\backslash D\mapsto U$
is an isometry between $(M\backslash D, \omega|_{M\backslash D})$ and $(U,\bar d|_U)$.
On the other hand, since $\omega$ is a conic metric with angle $2\pi \beta \le 2\pi$ along $D$, one can easily show by standard arguments that
$M\backslash D$ is geodesically convex with respect to $\omega$. Then it follows from \eqref{eq:2-side-esti} that $M$ is the metric completion of $M\backslash D$
and $\iota$ extends to a Lipschtz map from $(M,\omega)$ onto $(\bar M,\bar d)$, still denoted by $\iota$. In fact, the Lipschtz constant is $1$.

We claim that $\iota$ is an isometry. This is equivalent to the following: For any $p$ and $q$ in $M\backslash D$,
$$d_\omega(p,q)\,=\,\bar d(\iota(p), \iota(q)).$$
It also follows from \eqref{eq:2-side-esti} that $\bar D = \iota(D)$ has Hausdorff measure $0$ and is the Gromov-Hausdorff limit of $D$ under the convergence of
$(M,\omega_\delta)$ to $(\bar M,\bar d)$.
To prove the above claim, we only need to prove: For any $\bar p, \bar q \in \bar M\backslash\bar D$, there is a minimizing geodesic
$\gamma \subset  \bar M\backslash\bar D$ joining $\bar p$ to $\bar q$. Its proof is based on a relative volume comparison estimate due to Gromov
(\cite{gromov}, p 523, (B)).
\footnote{I am indebted to Jian Song for this reference. He seems to be the first of applying such an estimate to studying the convergence problem in
K\"ahler geometry.}
We will prove it by contradiction. If no such a geodesic joins $\bar p$ to $\bar q$, then
$$\bar d(\bar p,\bar q)\,< \, d_\omega(p,q),$$
where $\bar p = \iota(p)$ and $\bar q = \iota (q)$. Then there is a $r>0$ satisfying:

\vskip 0.1in
\noindent
(1) $B_r(\bar p, \bar d)\cap \bar D=\emptyset$
and $B_r(\bar q, \bar d)\cap \bar D=\emptyset$, where $B_r(\cdot, \bar d)$ denotes a geodesic ball in $(\bar M, \bar d)$;

\vskip 0.1in
\noindent
(2) $\bar d(\bar x,\bar y) < d_\omega (x,y)$, where $\bar x = \iota(x)\in B_r(\bar p, \bar d) $ and $\bar y = \iota (y)\in B_r(\bar q, \bar d)$.

\vskip 0.1in
It follows from (1) and (2) that any minimizing geodesic $\gamma$ from $\bar x$ to $\bar y$ intersects with $\bar D$.
By choosing $r$ sufficiently small, we may have
$$B_r(\bar p, \bar d) = \iota (B_r(p, \omega))~~~{\rm and}~~~B_r(\bar q, \bar d)=\iota(B_r(q, \omega)).$$
Choose a small tubular neighborhood $T$ of $D$ in $M$ whose closure is disjoint from both $B_r(p, \omega)$ and $B_r(q, \omega)$.
It is easy to see that $T$ can be chosen to have the volume of $\partial T$
as small as we want. Now we choose $p_\delta, q_\delta\in M$ and neighborhood $T_\delta$ of $D$ with respect to $\omega_\delta$ such that
in the Gromov-Haudorff convergence,
$$\lim_{\delta\to 0+} p_\delta \,= \,\bar p\,,~~~\lim_{\delta\to 0+} q_\delta\, =\, \bar q\,,~~~\lim_{\delta\to 0+} T_\delta\,=\,\iota(T)\,.$$
It follows
$$\lim_{\delta\to 0+} Vol(\partial T_\delta,\omega_\delta)\,=\, Vol(\partial T, \omega).$$
Also, for $\delta$ sufficiently small,
$B_r(p_\delta, \omega_\delta)$, $B_r(q_\delta, \omega_\delta)$ and $T_\delta$ are mutually disjoint. Clearly, any minimizing geodesic
$\gamma_\delta$ from any $w\in B_r(p_\delta, \omega_\delta)$ to $z\in B_r(q_\delta, \omega_\delta)$ intersects with $T_\delta$,
so by Gromov's estimate (\cite{gromov}, p523, (B)),
$$c\, r^{2n} \,\le\, Vol(B_r(q_\delta, \omega_\delta),\omega_\delta)\,\le\, C\,Vol(\partial T_\delta,\omega_\delta),$$
where $c$ depends only on $\beta$ and $C$ depends only on $\mu$, $n$, $r$. This leads to a contradiction because
$Vol(\partial T_\delta,\omega_\delta)$ converge to $Vol(\partial T,\omega)$ which can be made as small as we want.
Thus, $\iota$ is an isometry and our theorem is proved.

\end{proof}
Finally, we exam the limit of $\rho_{\omega_\delta, \ell}$ for any $\ell >0$.

First we associate a Hermitian norm $||\cdot||^2_0$ to $\omega_0$:
For any section $\sigma$ of $K_M^{-1}$, in local coordinates $z_1,\cdots, z_n$, we can write
$$\sigma \,=\, f \frac{\partial}{\partial z_1}\wedge \cdots \wedge \frac{\partial}{\partial z_n},$$
then
$$ ||\sigma||^2_0\,=\, e^{h_0} \, \det(g_{i\bar j}) \,|f|^2,$$
where $\omega_0\,=\,\sqrt{-1}\, g_{i\bar j} \,dz_i\wedge d\bar z_j$.
The curvature form of $||\cdot||_0^2$ is $\omega_0$.

Similarly, we can associate a Hermitian norm $||\cdot||^2_\delta$ whose curvature is $\omega_\delta$.
Using \eqref{eq:2-2}, we see
$$||\cdot  ||_\delta^2\,=\, e^{c_\delta' - \varphi_\delta } \,||\cdot||_0^2,$$
where $c_\delta'$ satisfies
$$\int_M \left ( e^{h_0 -  \varphi_\delta + c_\delta' } \,-\, 1\right ) \, \omega_0^n \,=\,0.$$
Then as $\delta \to 0$, Hermitian norms $||\cdot||^2_\delta$ converge to the Hermitian norm on $K^{-1}_M$:\footnote{For simplicity of notations, we do not make
explicit the dependence of $||\cdot||_\delta$ and $||\cdot||$ on $\mu$.}
$$||\cdot||^2 \,=\, e^{c' - \varphi}\, ||\cdot||_0^2,$$
where $\varphi $ is the solution of \eqref{eq:2-1} and $c'$ satisfies:
$$\int_M \left ( e^{h_0 - \varphi + c'} \,-\,1\right )\,\omega_0^n\,=\,0.$$
If we denote by $||\cdot||_\beta^2$ the Hermitian norm on $K_M^{-1}$ defined by the volume form of $\omega_\varphi$, then
$$||\cdot||^2\,=\, e^{c' - a_\beta}\, ||S||_\beta^{2(1-\beta)} \, ||\cdot||^2_\beta.$$

\begin{theo}
\label{th:2-5}
For any $\ell >0$, let $<\cdot,\cdot>_\delta$ be the inner product on $H^0(M,K_M^{-\ell})$ induced by $\omega_\delta$ and the Hermitian metric $||\cdot||_\delta^2 $
on $K_M^{-1}$. Then as $\delta $ tends to $0$, $<\cdot,\cdot>_\delta$ converge to the corresponding inner product by the Hermitian metric
$||\cdot||^2$ and $\omega$. In particular, when $\ell $ is sufficiently large,
$\rho_{\omega_\delta, \ell}$ converge to $\rho_{\omega,\ell}$.
\end{theo}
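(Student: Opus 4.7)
The plan is to reduce the convergence of the inner products to a dominated-convergence argument, from which the convergence of $\rho_{\omega_\delta,\ell}$ follows by Gram-Schmidt. For sections $T_1, T_2 \in H^0(M, K_M^{-\ell})$, the inner product $\langle T_1, T_2\rangle_\delta$ is
$$\int_M (T_1, T_2)_{||\cdot||_\delta^{2\ell}}\,\omega_\delta^n.$$
Using $||\cdot||_\delta^2 = e^{c_\delta' - \varphi_\delta}||\cdot||_0^2$ and rewriting $\omega_\delta^n$ via \eqref{eq:2-2} as $e^{h_0+c_\delta-\mu\varphi_\delta}(\delta+||S||_0^2)^{-(1-\beta)}\omega_0^n$, the integrand becomes
$$(T_1, T_2)_{||\cdot||_0^{2\ell}}\cdot e^{\ell(c_\delta'-\varphi_\delta)}\cdot e^{h_0+c_\delta-\mu\varphi_\delta}(\delta+||S||_0^2)^{-(1-\beta)}\,\omega_0^n.$$
This displays the $\delta$-dependence in a form suitable for taking limits.

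The first step is to verify pointwise convergence of the integrand: by Theorem \ref{th:GH} and the uniform $C^0$-bound on $\varphi_\delta$ combined with \eqref{eq:2-side-esti} (which provides uniform H\"older continuity), $\varphi_\delta \to \varphi$ uniformly on $M$, and the normalizing constants $c_\delta \to a_\beta$, $c_\delta' \to c'$ by dominated convergence applied to their defining integral identities. Thus the integrand converges pointwise on $M\setminus D$ to the corresponding expression for the limit inner product. The second and central step is producing a uniform integrable majorant. The elementary inequality
$$(\delta + ||S||_0^2)^{-(1-\beta)} \le ||S||_0^{-2(1-\beta)}$$
holds for all $\delta > 0$ since $1-\beta \ge 0$, and the right-hand side is integrable against $\omega_0^n$ precisely because $(1-\beta)\lambda < 1$, i.e.\ $\mu > 0$, when $D$ is smooth. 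Combined with the uniform bounds on $\varphi_\delta$, $c_\delta$, and $c_\delta'$, all remaining factors are uniformly bounded, so dominated convergence yields $\langle T_1, T_2\rangle_\delta \to \langle T_1, T_2\rangle$, where the limit is the inner product induced by $||\cdot||^2$ and $\omega$.

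For the second assertion, I would fix an arbitrary basis of $H^0(M, K_M^{-\ell})$ and apply Gram-Schmidt with respect to $\langle\cdot,\cdot\rangle_\delta$ and the limit inner product. Since the Gram matrix entries converge and the limit inner product is positive-definite on the finite-dimensional space $H^0(M, K_M^{-\ell})$, continuity of Gram-Schmidt produces orthonormal bases $\{S_i^\delta\}$ converging to an orthonormal basis $\{S_i\}$ for the limit. Because $\rho = \sum_i ||S_i||^2$ is basis-independent and $||\cdot||_\delta^2 \to ||\cdot||^2$ smoothly on compact subsets of $M\setminus D$ by Theorem \ref{th:GH}, one obtains pointwise convergence of $\rho_{\omega_\delta,\ell}$ to $\rho_{\omega,\ell}$ on $M\setminus D$. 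The main obstacle is the dominated-convergence step; in particular, the integrability of the majorant $||S||_0^{-2(1-\beta)}$ is where the hypothesis $\mu > 0$ enters decisively, and it is the reason one cannot treat the edge case $\beta = 1-\lambda^{-1}$ with this argument.
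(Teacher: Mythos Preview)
Your approach is correct and essentially the same as the paper's, which phrases the key step as $L^p$-convergence of the volume forms $\omega_\delta^n \to \omega^n$ for $p \in (1,(1-\beta)^{-1})$ together with uniform convergence of the Hermitian metrics, rather than writing out a dominated-convergence argument---but these amount to the same thing, and your Gram--Schmidt step for $\rho_{\omega_\delta,\ell}$ simply spells out what the paper leaves implicit. One small correction: the integrability of the majorant $||S||_0^{-2(1-\beta)}$ near the smooth divisor $D$ follows from $\beta>0$ (so that $2(1-\beta)<2$), not from $\mu>0$; the condition $(1-\beta)\lambda<1$ is not what makes this step work.
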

\begin{proof}
We have seen above that $\varphi_\delta$ converges to $\varphi$ in a H\"older continuous norm. It follows that the volume forms $\omega_\delta^n$ converge to
$\omega^n$ in the $L^p$-topology for any given $p \in (1, (1-\beta)^{-1})$ and the Hermitian metrics $||\cdot||_\delta^2$ converge to $||\cdot||^2$.
Since the inner products $<\cdot,\cdot>_\delta$ are defined by these Hermitian metrics and volumes forms, the theorem follows easily.
\end{proof}

\section{An extension of Cheeger-Colding-Tian}

In this section, we show a compactness theorem on conic K\"ahler-Einstein metrics. This theorem, coupled with the smooth convergence result in the next section,
extends a result of Cheeger-Colding-Tian \cite{cheegercoldingtian} on smooth K\"ahler-Einstein metrics. In fact, our proof makes use of results in \cite{cheegercoldingtian} with injection of some new technical ingredients.

Let $\omega_i$ be a sequence of conic K\"ahler-Einstein metrics with cone angle $2\pi\beta_i$ along $D$, so we have
$${\rm Ric}(\omega_i) \,=\, \mu_i \omega_i \,+\,2\pi (1-\beta_i) [D],~~~~~\mu_i \,=\,1- (1-\beta_i) \lambda.$$
We assume that $\beta_i \ge \epsilon > 0$ and $\lim \beta_i = \beta_\infty$, it follows $\lim \mu_i = \mu_\infty >0$.

For each $\omega_i$, we use Theorem \ref{th:GH} to get a smooth K\"ahler metric $\tilde \omega_i$ satisfying:

\vskip 0.1in
\noindent
{\bf A1}. Its K\"ahler class
$[\tilde\omega_i]\, =\, 2 \pi c_1(M)$;

\vskip 0.1in
\noindent
{\bf A2}. Its Ricci curvature ${\rm Ric}(\tilde\omega_i) \,\ge \, \mu_i$;

\vskip 0.1in
\noindent
{\bf A3}. The Gromov-Hausdorff distance $d_{GH}(\omega_i,\tilde \omega_i)$ is less that $1/i$.
\vskip 0.1in

By the Gromov compactness theorem, a subsequence of $(M, \tilde\omega_i)$ converges to a metric space $(M_\infty,d_\infty)$ in the Gromov-Hausdorff topology.
For simplicity, we may assume that $(M, \tilde\omega_i)$ converges to $(M_\infty,d_\infty)$. It follows from (3) above that $(M,\omega_i)$
also converges to $(M_\infty,d_\infty)$ in the Gromov-Hausdorff topology.

\begin{theo}
\label{th:3-1}
There is a closed subset ${\cal S}\subset M_\infty$ of Hausdorff codimension at least $2$ such that $M_\infty\backslash {\cal S}$ is a smooth K\"ahler manifold and $d_\infty$ is
induced by a K\"ahler-Einstein metric $\omega_\infty$ outside ${\cal S}$, that is,
$${\rm Ric}(\omega_\infty)\,=\,\mu_\infty\, \omega_\infty ~~~~{\rm on}~~M_\infty\backslash {\cal S}.$$
If $\beta_\infty < 1$, then $\omega_i$ converges to $\omega_\infty$ in the $C^\infty$-topology outside ${\cal S}$.
Moreover, if $\beta_\infty=1$, the set ${\cal S}$ is of codimension at least $4$
and $\omega_\infty$ extends to a smooth K\"ahler metric on $M_\infty\backslash {\cal S}$.
\end{theo}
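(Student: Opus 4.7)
The plan is to transfer Cheeger-Colding-Tian theory from the smooth approximations $\tilde\omega_i$ constructed in Section 2 to the limit, and then use equation \eqref{eq:conic-1} together with the approximation scheme to refine the singular set. First, since ${\rm Ric}(\tilde\omega_i)\ge \mu_i\ge \mu_\infty/2 > 0$ and $[\tilde\omega_i]=2\pi c_1(M)$ gives a uniform volume bound, Myers' theorem and Bishop-Gromov yield uniform bounds on diameter and noncollapsing. By A3, $(M,\omega_i)$ and $(M,\tilde\omega_i)$ share the same Gromov-Hausdorff limit $(M_\infty,d_\infty)$. Applying Cheeger-Colding to the smooth sequence $\tilde\omega_i$, we split $M_\infty = \mathcal{R}\sqcup {\cal S}$, where $\mathcal{R}$ is the set of points at which every tangent cone is $\RR^{2n}$, and ${\cal S}$ is closed of Hausdorff codimension $\ge 2$. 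On $\mathcal{R}$ one obtains a $C^{1,\alpha}$ Riemannian structure from the Cheeger-Colding harmonic-radius estimate.

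Next I would promote this to a K\"ahler-Einstein structure on $\mathcal{R}$. At any $x\in \mathcal{R}$, harmonic-coordinate charts pulled back from $\tilde\omega_i$ carry complex structures $\tilde J_i$ whose covariant derivatives are controlled in $L^2$ by the Kähler version of the Bochner identity; this is the same integral Kähler identity exploited in \cite{cheegercoldingtian}, now valid because the defect $(1-\beta_i)[D]$ has been absorbed into the smooth Ricci lower bound by the construction of Section 2. Extracting a limit $J_\infty$ which is parallel with respect to $\omega_\infty$, one obtains a limit Kähler structure with ${\rm Ric}(\omega_\infty)=\mu_\infty\omega_\infty$ on $\mathcal{R}$, by passing the smooth part of \eqref{eq:conic-1} to the limit: the current $(1-\beta_i)[D]$ is supported on the Gromov-Hausdorff limit of $D$, which lies in ${\cal S}$, so contributes nothing in $\mathcal{R}$. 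Smoothness of $\omega_\infty$ on $\mathcal{R}$ then follows from elliptic bootstrapping on the complex Monge-Amp\`ere equation.

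For the dichotomy in the last two sentences: when $\beta_\infty<1$, the limit divisor $D_\infty$ is a genuine part of ${\cal S}$, and $C^\infty$-convergence of $\omega_i\to\omega_\infty$ outside ${\cal S}$ is obtained by combining the interior higher-order estimates \eqref{eq:h-d} (valid on compact subsets away from $D$) with the two-sided bound \eqref{eq:2-side-esti} and standard elliptic bootstrapping. When $\beta_\infty=1$, the conic divisor dissolves into the interior: after smoothing, the metrics become asymptotically genuinely Einstein, and the improvement from codimension $2$ to codimension $4$ uses the Kähler $L^2$-curvature bound
\[\int_M |{\rm Rm}(\tilde\omega_i)|^2\,\tilde\omega_i^n \,\le\, C,\]
which follows from Chern-Weil applied to $c_1(M)^2\cdot [2\pi c_1(M)]^{n-2}$, together with the $\epsilon$-regularity of \cite{cheegercoldingtian}, which rules out real codimension-$2$ strata in the presence of such a bound.

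The main obstacle I anticipate is controlling the smoothing uniformly in $i$ as $\beta_i\to 1$: the distributional term $2\pi(1-\beta_i)[D]$ in ${\rm Ric}(\omega_i)$ concentrates along $D$, and one must show that after smoothing through Section 2 this concentration is bounded in $L^1$-mass and disappears in the limit, so that the Chern-Weil $L^2$-curvature integrand is genuinely controlled and the tangent-cone splitting argument of \cite{cheegercoldingtian} applies. This requires a careful comparison of $\omega_i$ and $\tilde\omega_i$ on a shrinking neighborhood of $D$, using \eqref{eq:2-side-esti} and the smooth convergence on compact subsets of $M\setminus D$, and is the step where an actual extension of \cite{cheegercoldingtian}, rather than a black-box application, is needed.
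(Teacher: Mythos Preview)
Your outline has two genuine gaps, one for each case of $\beta_\infty$.

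For $\beta_\infty<1$, the assertion ``the Gromov--Hausdorff limit of $D$ lies in ${\cal S}$'' is precisely the point that requires proof, and you do not supply it. The paper's argument is a volume comparison: at any $y_i\in D$ the conic angle forces
\[
\frac{{\rm Vol}(B_r(y_i,\omega_i))}{r^{2n}}\,\le\, c(n)\,\beta_i,
\]
whereas at $x\in\mathcal{R}$ some tangent cone is $\RR^{2n}$, so ${\rm Vol}(B_r(x,d_\infty))/r^{2n}\ge c(n)-\epsilon$. Since $\beta_\infty<1$, these are incompatible, hence any sequence $x_i\to x$ satisfies $B_{\bar r}(x_i,\omega_i)\cap D=\emptyset$ for some fixed $\bar r>0$. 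Only then does Anderson's $\epsilon$-regularity apply to the genuinely smooth Einstein metric $\omega_i$ on that ball, yielding curvature bounds and the $C^\infty$-convergence. Your appeal to \eqref{eq:h-d} does not help: that estimate is for the approximations $\omega_\delta$ of a \emph{fixed} conic metric on compacta chosen away from $D$, and gives no a priori reason why a ball around $x_i$ avoids $D$.

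For $\beta_\infty=1$, the Chern--Weil step fails as written. The approximations $\tilde\omega_i$ satisfy only ${\rm Ric}(\tilde\omega_i)\ge\mu_i$; they are not Einstein, and near $D$ the Ricci curvature of $\tilde\omega_i$ blows up like $\delta^{-1}$ (see the explicit formula after \eqref{eq:2-2}). The Chern--Weil representative of $c_1^2\cup[\omega]^{n-2}$ controls a combination of $\int R^2$ and $\int|{\rm Ric}|^2$, not $\int|{\rm Rm}|^2$; to reach $\int|{\rm Rm}|^2$ you would need $c_2$ together with an $L^2$ bound on Ricci, which you do not have. The paper takes a different route: since $\int_M (R_{\tilde\omega_i}-n\mu_i)\,\tilde\omega_i^n$ is topological and tends to $0$ as $\mu_i\to 1$, and ${\rm Ric}(\tilde\omega_i)-\mu_i\tilde\omega_i\ge 0$ implies $|{\rm Ric}(\tilde\omega_i)-\mu_i\tilde\omega_i|\le R_{\tilde\omega_i}-n\mu_i$, the sequence $(M,\tilde\omega_i)$ is \emph{almost K\"ahler--Einstein} in the sense of \cite{tianwang}, and Theorem~2 there gives the codimension-$4$ statement directly. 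Your anticipated obstacle is real, but the resolution is not a sharper comparison of $\omega_i$ and $\tilde\omega_i$ near $D$; it is to abandon the $L^2$-curvature route and invoke the almost-Einstein regularity theory instead.
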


This theorem is essentially due to Z.L. Zhang and myself \cite{tianzhang}. In this joint work, we develop a regularity theory for conic Einstein metrics
which generalizes the work of Cheeger-Colding and Cheeger-Colding-Tian. Here, for completion and convenience, we give an alternative proof by
using the approximations from last section.

\begin{proof}
Using the fact that $(M_\infty, d_\infty)$ is the Gromov-Hausdorff limit of $(M,\tilde\omega_i)$, we can deduce from \cite{cheegercolding}
the existence of tangent cones at every $x\in M_\infty$. More precisely, given any $x\in M_\infty$, for any $r_i\mapsto 0$,
by taking a subsequence if necessary, $(M_\infty, r_i^{-2} d_\infty, x)$ converges to a tangent cone ${\cal C}_x$ at $x$.
Define ${\cal R}$ to be the set of
all points $x$ in $M_\infty$ such that some tangent cone ${\cal C}_x$ is isometric to $\RR^{2n}$.

First we prove that ${\cal R}$ is open. If $\beta_\infty = 1$, then $\lim \mu_i =1$. Since
$$[\tilde\omega_i] \,=\, 2\pi\, c_1(M)~~~{\rm and}~~~{\rm Ric}(\tilde\omega_i)\,\ge\,\mu_i\,\tilde\omega_i,$$
by the arguments in the proof of Theorem 6.2 in \cite{tianwang},
one can show that $(M,\tilde\omega_i)$ is a sequence of almost K\"ahler-Einstein metrics in the sense of
\cite{tianwang}. Then it follows from Theorem 2 in \cite{tianwang} that $M_\infty$ is smooth outside
a closed subset $ {\cal S}$ of codimension at least $4$ and $d_\infty$ is induced by a smooth K\"ahler-Einstein metric $\omega_\infty$
on $M_\infty\backslash  {\cal S}$.

Now assume that $\beta_\infty < 1$. Note that $(M,\omega_i)$
also converge to $(M_\infty,d_\infty)$ in the Gromov-Hausdorff topology. Let $\{x_i\}$ be a sequence of points in $M$ which
converge to $x \in {\cal R}$ during $(M, \omega_i)$'s converging to $(M_\infty, d_\infty)$. Since $x\in {\cal R}$, there is a tangent cone
${\cal C}_x$ of $(M_\infty, d_\infty)$ at $x$ which is isometric to $\RR^{2n}$. It follows that for any $\epsilon >0$, there is a $r=r(\epsilon)$ such that
$$\frac{Vol(B_r(x,d_\infty))}{r^{2n}} \,\ge \, c(n) - \epsilon,$$
where $c(n)$ denotes the volume of the unit ball in $\RR^{2n}$.
On the other hand, if $y_i\in D $, then by the Bishop-Gromov volume comparison, for any $\tilde r > 0 $, we have
$$\frac{Vol(B_{\tilde r} (y_i, \omega_i))}{\tilde r^{2n} }\,\le\, c(n) \,\beta_i .$$
It also follows from the Boshop-Gromov volume comparison that there is an $N\,=\,N(\epsilon)$
such that for any small $\bar r\in (0,r/N)$ and $y_i\in B_{\bar r} (x_i,\omega_i)$, we have
$$1 - \epsilon\,\le\, \frac{Vol(B_{r} (y_i, \omega_i))} {Vol (B_{r} (x_i, \omega_i))} \,\le\, 1 + \epsilon.$$

Now we claim that if $\bar r = r/ N$, we have $B_{\bar r}(x_i,\omega_i)\cap D = \emptyset$. If this claim is false, say $y_i\in B_{\bar r}(x_i,\omega_i)\cap D $,
then for $i$ sufficiently large, we can deduce from the above and a result of Colding \cite{colding} on the volume convergence in the Gromov-Hausdorff topology
$$ c(n) - 2 \epsilon \,\le\, \frac{Vol(B_r(x_i, \omega_i))}{r^{2n}} \,\le\, (1+\epsilon) \,\frac{Vol(B_r(y_i, \omega_i))}{r^{2n}} \,\le\, c(n) ( 1 + \epsilon) \,\beta_i.$$
Then we get a contradiction if $\epsilon$ is chosen sufficiently small. The claim is proved.

Since $B_{\bar r}(x_i,\omega_i)$ is contained in the smooth part of $(M,\omega_i)$ and its volume
is sufficiently close to that of an Euclidean ball, the curvature of $\omega_i$ is uniformly bounded on the half ball $B_{3\bar r/4}(x_i,\omega_i)$
(cf. \cite{anderson}). It follows that $\omega_i$ restricted to $B_{\bar r/2}(x_i,\omega_i)$ converge to a smooth K\"ahler-Einstein metric on $B_{\bar r/2}(x,d_\infty)$ and
$B_{\bar r/2}(x,d_\infty) \subset {\cal R}$. So ${\cal R}$ is open and $d_\infty$ restricted to ${\cal R}$ is induced by a smooth K\"ahler-Einstein metric
$\omega_\infty$.

The rest of the proof is standard in view of \cite{cheegercoldingtian}.

Let ${\cal S}_k$ ($k=0,1,\cdots,2n-1$) denote the subset of $M_\infty$
consisting of points for which no tangent cone splits off a factor, $\RR^{k+1}$, isometrically. Clearly,
${\cal S}_0 \subset {\cal S}_1\subset \cdots \subset {\cal S}_{2n-1}$. It is proved by Cheeger-Colding that ${\cal S}_{2n-1} = \emptyset$, $\dim {\cal S}_k \le k$ and ${\cal S}\,=\,{\cal S}_{2n-2}$.
Moreover, if $\beta_\infty = 1$, it follows from \cite{tianwang} that ${\cal S}\,=\,{\cal S}_{2n-4}$. Then we have proved this theorem.
\end{proof}

Using the same arguments in \cite{cheegercoldingtian}, one can show:

\begin{theo}
\label{th:tangentcone}
Let ${\cal C}_x$ be a tangent cone of $M_\infty$ at $x\in {\cal S}$, then we have

\vskip 0.1in
\noindent
{\bf C1}. Each ${\cal C}_x$ is regular outside a closed subcone ${\cal S}_x$ of complex codimension at least $1$. Such a ${\cal S}_x$ is the singular set of ${\cal C}_x$;

\vskip 0.1in
\noindent
{\bf C2}. ${\cal C}_x = \CC^k\times {\cal C}_x'$, in particular, ${\cal S}_{2k+ 1}\, =\, {\cal S}_{2k}$. We will denote $o$ the vortex of ${\cal C}_x$;

\vskip 0.1in
\noindent
{\bf C3}. There is a natural K\"ahler Ricci-flat metric $g_x$ whose K\"ahler form $\omega_x$ is $\sqrt{-1}\, \partial\bar\partial \rho_x^2$
on ${\cal C}_x \backslash {\cal S}_x$ which is also a cone metric, where $\rho_x$ denotes the distance function from the vertex of ${\cal C}_x$;

\vskip 0.1in
\noindent
{\bf C4}. For any $x \in {\cal S}_{2n-2}$, ${\cal C}_x \,=\, \CC^{n-1}\times {\cal C}_x'$, where ${\cal C}_x'$ is a 2-dimensional flat cone of angle $2 \pi \bar \mu$
such that $0 < \bar\beta_\infty \le \bar\mu \le \beta_\infty$ and $(1-\bar\mu) = m (1-\beta_\infty)$ for some integer $m \ge 1$, where 
$\bar \beta_\infty$ depends only on $\beta_\infty$.
\end{theo}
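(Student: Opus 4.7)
The plan is to follow the arguments of \cite{cheegercoldingtian} as closely as possible, using the smooth approximations $\tilde\omega_i$ from Section 2 to import directly the tangent cone structure theorems for spaces with Ricci curvature bounded below, and then to extract the additional conic information from the divisors $D$ in the sequence.

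For C1--C3 I would proceed as follows. By the Cheeger-Colding cone structure theorem each tangent cone $\cC_x$ is a metric cone with some vertex $o$; let $\rho_x$ denote distance to $o$. The approximations $\tilde\omega_i$ satisfy $[\tilde\omega_i]=2\pi c_1(M)$ and ${\rm Ric}(\tilde\omega_i)\ge\mu_i\,\tilde\omega_i$ with $\mu_i\to\mu_\infty>0$, so in the terminology of \cite{tianwang} they form an almost K\"ahler-Einstein sequence; hence the almost-K\"ahler structures pass to $\cC_x$ and the K\"ahler version of the splitting/regularity arguments in \cite{cheegercoldingtian} shows that the singular set $\cS_x$ is a closed subcone of complex codimension at least $1$, and that any Euclidean factor $\RR^j$ in $\cC_x$ is already $\CC^{j/2}$. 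This yields C1 together with the splitting $\cC_x=\CC^k\times\cC_x'$ and $\cS_{2k+1}=\cS_{2k}$ in C2. For C3, rescaling at $x$ drives the Einstein constant to zero, so $\omega_x$ is Ricci-flat K\"ahler on $\cC_x\setminus\cS_x$; the dilation vector field $\rho_x\partial_{\rho_x}$ is real holomorphic, and a direct computation as in \cite{cheegercoldingtian} identifies $\rho_x^2$ as its K\"ahler potential, so $\omega_x=\sqrt{-1}\,\partial\bar\partial\rho_x^2$.

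The principal new input is C4, where the conic nature of the approximating sequence enters. By C2, for $x\in\cS_{2n-2}$ the cone splits maximally as $\CC^{n-1}\times\cC_x'$, and $\cC_x'$ is a $2$-dimensional Ricci-flat K\"ahler cone whose only singularity is its vertex; any such cone is flat, so $\cC_x'$ is a flat cone of angle $2\pi\bar\mu$ for some $0<\bar\mu\le 1$. To pin down $\bar\mu$, I would track the divisor: the arguments in the proof of Theorem \ref{th:GH} produce Gromov-Hausdorff subsequential limits $\bar D\subset M_\infty$ of $D\subset (M,\omega_i)$, and the volume-comparison argument showing $B_{\bar r}(x_i,\omega_i)\cap D=\emptyset$ at regular points gives, contrapositively, that any singular point with non-flat tangent cone lies in $\bar D$. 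Rescaling by $r_i\to 0$ at such an $x$, the divisor produces a limit divisor in $\CC^{n-1}\times\cC_x'$ contained in $\CC^{n-1}\times\{o\}$; locally $m\ge 1$ sheets of $D$ coalesce at $x$, each contributing angle deficit $2\pi(1-\beta_\infty)$ in the transverse flat cone $\cC_x'$, giving $1-\bar\mu=m(1-\beta_\infty)$.

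The main obstacle, and the step I expect to require the most care, is justifying the integrality of $m$ together with the existence of a uniform lower bound $\bar\beta_\infty>0$ for $\bar\mu$. Integrality should come from realizing the tangent cone as a limit of rescaled conic K\"ahler-Einstein manifolds: on the approximating level the local monodromy around $D$ is multiplication by $e^{2\pi\sqrt{-1}\beta_i}$, and passing to the limit the monodromy around the codimension-two stratum of $\cC_x'$ must be a positive integer power of $e^{2\pi\sqrt{-1}\beta_\infty}$, forcing $1-\bar\mu$ to be an integer multiple of $1-\beta_\infty$. The lower bound $\bar\mu\ge\bar\beta_\infty$ follows from Bishop-Gromov non-collapsing: since $[\omega_i]=2\pi c_1(M)$ the total volume is fixed, and combined with ${\rm Ric}\ge\mu_i$ this caps small-ball volumes from above and therefore caps the multiplicity $m$ by a constant depending only on $\beta_\infty$, giving $\bar\beta_\infty=\bar\beta_\infty(\beta_\infty)>0$.
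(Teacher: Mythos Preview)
Your treatment of C1--C3 matches the paper's: these are stated to follow directly from \cite{cheegercoldingtian}, with the smooth approximations $\tilde\omega_i$ (and \cite{tianwang}) supplying the almost K\"ahler--Einstein input needed to import that structure theory.

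For C4 your outline and the paper's agree in spirit but differ in the mechanism that produces the quantization $1-\bar\mu = m(1-\beta_\infty)$. The paper invokes the \emph{slicing argument} of \cite{cheegercoldingtian}, the one used there to show $\cS_{2n-2}=\emptyset$ for smooth K\"ahler--Einstein limits. One builds an almost-splitting map to $\RR^{2n-2}$ and runs a Gauss--Bonnet type count on the $2$-dimensional slices; in the smooth case the slices carry no curvature concentration, forcing the transverse cone to be $\CC$. In the conic case each transverse intersection of a slice with $D$ contributes a term close to $2\pi(1-\beta_i)$, and since the number of such intersection points is an integer, the limiting transverse angle satisfies $1-\bar\mu = m(1-\beta_\infty)$. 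The bounds $\bar\beta_\infty\le\bar\mu\le\beta_\infty$ then come from Bishop--Gromov volume comparison, as you also say.

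Your phrase ``$m$ sheets of $D$ coalesce, each contributing defect $2\pi(1-\beta_\infty)$'' is exactly the picture the slicing argument makes rigorous. The gap in your proposal is the monodromy step you offer in its place: as written it is not well-defined. You do not specify which bundle and connection carry the monodromy; Levi--Civita holonomy around a loop encircling several nearby sheets of $D$ is not, without further argument, an integer power of $e^{2\pi i\beta_\infty}$ (holonomies compose by concatenation of loops, not by adding angle defects, and the loops here are not nested in a way that makes this automatic); and convergence of holonomy under Gromov--Hausdorff limits across the singular set requires justification you have not supplied. The slicing argument sidesteps all of this by converting the question into an integral identity on $2$-dimensional sets, where the contribution of each intersection with $D$ is isolated and the integer $m$ is literally an intersection number.
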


In fact, {\bf C1}, {\bf C2} and {\bf C3} follow directly from results in \cite{cheegercoldingtian}. The proof of {\bf C4} uses the slicing argument
in \cite{cheegercoldingtian} for proving that ${\cal S}_{2n-2}\,=\,\emptyset$ in the case of smooth K\"ahler-Einstein metrics. In our new case, the conic
singularity of $\omega_i$ along $D$ may contribute a term close to $2\pi (1-\beta_i)$ in the slicing argument, this is how we can conclude that
${\cal C}_x'$ is a 2-dimensional flat cone of angle $2\pi \bar\mu$. The bounds on $\bar\mu$ follow from the Bishop-Gromov volume comparison. Note that
$\bar\beta_\infty$ depends only on the diameter and volume of $M_\infty$. Hence, there are only finitely many of such $\bar\mu$.

Next we state a corollary of Theorem \ref{th:GH}:
\begin{lemm}
\label{lemm:3-1}
There is a uniform bound on the Sobolev constants of $(M,\omega_i)$, that is, there is a
constant $C$ such that for any $f \in C^1(M,\RR)$,
\begin{equation}
\label{eq:sobolev}
\left (\int_M |f|^{\frac{2n}{n-1}} \right)^{\frac{n-1}{n}} \,\omega_i^n \,\le\, C \int_M (|d f|_{\omega_i}^2\,+\,|f|^2)\,\omega_i^n.
\end{equation}
\end{lemm}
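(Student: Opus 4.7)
The plan is to deduce the Sobolev inequality \eqref{eq:sobolev} on $(M,\omega_i)$ from the corresponding inequality for the smooth K\"ahler approximations $\omega_{i,\delta}$ constructed in Section 2, and then pass to the limit $\delta\to 0$. By Theorem \ref{th:2-1} and Theorem \ref{th:GH}, for each fixed $i$ and each $\delta>0$ we have a smooth K\"ahler metric $\omega_{i,\delta}$ in the class $2\pi c_1(M)$ with $\mathrm{Ric}(\omega_{i,\delta})\ge \mu_i$, converging to $\omega_i$ in the Gromov--Hausdorff topology as $\delta\to 0$ and smoothly on compact subsets of $M\setminus D$. Since $\mu_i\to\mu_\infty>0$, we may assume $\mu_i\ge\mu_\infty/2$ for all $i$; by Myers' theorem the diameters of $(M,\omega_{i,\delta})$ are then uniformly bounded, and the total volume $V=(2\pi c_1(M))^n[M]/n!$ is fixed.

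For a closed Riemannian manifold of real dimension $2n$ with Ricci curvature bounded below by $K>0$ and prescribed volume $V$, the classical Sobolev inequality
$$\left(\int_M |f|^{2n/(n-1)}\,dV_g\right)^{(n-1)/n}\le C_0\int_M\bigl(|df|_g^2+|f|^2\bigr)\,dV_g$$
holds with a constant $C_0=C_0(n,K,V)$ depending only on $n$, $K$, and $V$; one way to see this is to combine the Bishop--Gromov volume comparison with isoperimetric estimates of Croke--Ilias type, or to use the Bakry--Ledoux curvature-dimension approach. Applying this uniform inequality to each $\omega_{i,\delta}$ yields the conclusion for $\omega_{i,\delta}$ with a constant $C_0=C_0(n,\mu_\infty,V)$ independent of $i$ and $\delta$. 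For fixed $f\in C^1(M,\RR)$, I would then let $\delta\to 0$. By Theorem \ref{th:2-5}, $\omega_{i,\delta}^n$ converges to $\omega_i^n$ in $L^p(M,\omega_0^n)$ for some $p>1$, so the left-hand side and the $|f|^2$-term on the right pass to the limit by dominated convergence.

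The main obstacle is the convergence of the gradient term $\int_M|df|_{\omega_{i,\delta}}^2\omega_{i,\delta}^n$, where the inverse metric $g_{i,\delta}^{-1}$ degenerates in the normal direction to $D$ as $\delta\to 0$. The key cancellation is that, in a local coordinate $(z_1,\dots,z_n)$ with $D=\{z_1=0\}$, the combination $g^{j\bar k}\det(g)$ appearing in $|df|_g^2\,\omega^n$ is well behaved at the conic singularity: in the normal direction the decay of $g^{1\bar 1}\sim|z_1|^{2(1-\beta_i)}$ cancels the blow-up of $\det(g)\sim|z_1|^{-2(1-\beta_i)}$, giving a bounded contribution; in the tangential directions $g^{j\bar k}$ is of order $1$ and the weight from $\det(g)$ behaves like $|z_1|^{-2(1-\beta_i)}$, which is integrable since $\beta_i\ge\epsilon>0$. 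Combined with the two-sided estimate \eqref{eq:2-side-esti}, this yields an integrable dominating function for $|df|_{\omega_{i,\delta}}^2\omega_{i,\delta}^n$ that is independent of $\delta$, and dominated convergence then transfers \eqref{eq:sobolev} to $(M,\omega_i)$ with the same constant $C_0$.
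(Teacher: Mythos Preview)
Your proof is correct and follows essentially the same approach as the paper: approximate $\omega_i$ by the smooth metrics $\omega_{i,\delta}$ from Section~2, use the uniform Ricci lower bound ${\rm Ric}(\omega_{i,\delta})\ge\mu_i$ together with the fixed volume to obtain a uniform Sobolev constant for $\omega_{i,\delta}$, and then let $\delta\to 0$. The paper's own proof is a two-line sketch that simply says ``the lemma follows by taking $\delta\to 0$''; you have supplied the details of that limit. One minor simplification: for the gradient term you do not actually need the normal/tangential cancellation analysis, since $\omega_{i,\delta}\ge C^{-1}\omega_0$ already gives $|df|^2_{\omega_{i,\delta}}\le C|df|^2_{\omega_0}$, and the equation \eqref{eq:2-2} with the uniform $C^0$-bound on $\varphi_{i,\delta}$ gives $\omega_{i,\delta}^n\le C\,||S||_0^{-2(1-\beta_i)}\omega_0^n$, which is integrable and serves as a dominating function directly.
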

\begin{proof}
By Theorem \ref{th:GH}, for any $i$, there is a sequence of smooth K\"ahler metrics $\omega_{i,\delta}$ converging to $\omega_i$ in the Gromov-Hausdorff topology
and ${\rm Ric}(\omega_{i,\delta}) \,\ge\, \mu_i \,\omega_{i,\delta}$. Since the volume of $\omega_{i,\delta}$ is fixed, it is well-known that \eqref{eq:sobolev} holds uniformly for $\omega_{i,\delta}$. Then the lemma follows by taking $\delta\to 0$.
\end{proof}

\section{Smooth convergence}

We will adopt the notations from last section, e.g., $\omega_i$ is a conic K\"ahler-Einstein metric on $M$ with angle $2\pi \beta_i$ along $D$ as before.
The main result of this section is to show that $\omega_i$ converge to $\omega_\infty$ outside a close subset of codimension at least $2$. This is crucial
for our establishing the partial $C^0$-estimate for conic K\"ahler-Einstein metrics as well as finishing the proof Theorem \ref{th:main-2}.
This is related to the limit of $D$ when $(M,\omega_i)$ converges to $(M_\infty, d_\infty)$.
If $\beta_\infty < 1$, the limit of $D$ is in the singular set ${\cal S}$ of $M_\infty$ since $\omega_i$ converge to $\omega_\infty$ in the $C^\infty$-topology outside ${\cal S}$ as shown in last section. The difficulty lies in the case when $\beta_\infty \,=\,1$.
By \cite{tianwang}, ${\cal S}$ is of codimension at least $4$, so $M_\infty$ is actually smooth outside a closed subset of codimension $4$. Related results for smooth K\"ahler-Einstein metrics
were proved before (cf. \cite{cheegercoldingtian}, \cite{cheeger}). However, a priori, it is not even clear
if $\omega_i$ converge to $\omega_\infty$ in a stronger topology on any open subset of $M_\infty\backslash {\cal S}$. The original arguments in \cite{cheegercoldingtian} rely on an argument in \cite{anderson} which works only for smooth metrics. It fails for conic K\"ahler-Einstein metrics. So we need to have a new approach. In the course
of proving our main result in this section, we also exam the limit of $D$ in $M_\infty$.

First we describe a general and important construction: Given any conic metric $\omega$ with cone angle $2\pi \beta$ along $D$, its determinant gives a
Hermitian metric $\tilde H$ on $K_M^{-1}$ outside $D$. For simplicity, we will also denote by $\tilde H$ the induced Hermitian metric on $K_M^{-\ell}$ for any $\ell >0$.
However, $\tilde H$ is singular along $D$, more precisely, if $S$ is a defining section of $D$, then it is of the order $||S||_0^{-2(1-\beta)}$ along $D$, where
$||\cdot ||_0$ is a fixed Hermitian norm.
This implies that
$\tilde H(S,S)^{\frac{1-\beta}{\mu}} \tilde H$ is bounded along $D$, where $\mu = 1- (1-\beta) \lambda$.
On the other hand, there is a unique $f$ such that
as currents,
$${\rm Ric}(\omega)\,=\, \mu \,\omega\,+\,2\pi (1-\beta)\, [D]\,+ \,\sqrt{-1}\,\partial\bar\partial h,$$
where $f$ is normalized by
$$\int_M \left (e^h -1\right )\,\omega^n\,=\,0.$$
Note that $h$ is H\"older continuous. Put
$$H_\omega(\cdot,\cdot)\,=\, e^{\frac{h}{\mu}} \,\tilde H (S,S)^{\frac{1-\beta}{\mu}}\, \tilde H (\cdot,\cdot) ,$$
then as a current, the curvature of $H_\omega$ is equal to
$${\rm Ric}(\omega)\,-\, \frac{1-\beta}{\mu}\, \sqrt{-1}\, \partial\bar\partial \log \tilde H(S,S) \,- \,\frac{\sqrt{-1}}{\mu} \,\partial\bar\partial h\,=\, \omega.$$
Also we normalize $H_\omega$ by scaling $S$ such that
$$\int_M H_\omega (S,S) \,\omega^n\,=\,\int_M e^{\frac{\lambda h}{\mu}}\,\tilde H (S,S)^{\frac{1}{\mu}}\, \omega^n\,=\,1.$$
Such a Hermitian metric $H_\omega$ is uniquely determined by $\omega$ and $D$ and called the associated Hermitian metric of $\omega$.
If $\omega$ is conic K\"ahler-Einstein, its associated metric $H_\omega$ is determined by the volume form $\omega^n$, e.g., in local holomorphic coordinates
$z_1,\cdots,z_n$, write
$$\omega\,=\,\sqrt{-1}\,g_{i\bar j} \,dz_i\wedge d\bar z_j~~~{\rm and}~~~S \,=\, f \,
\left (\frac{\partial }{\partial z_1}\wedge\cdots \wedge \frac{\partial}{\partial z_n} \right)^\lambda,$$
then $H_\omega$ is represented by
$$\det (g_{i\bar j} )^{\frac{1}{\mu}}\, |f|^{\frac{2(1-\beta)}{\mu}}.$$
In particular, it implies that for any $\sigma\in H^0(M,K_M^{-\ell})$, $H_\omega(\sigma,\sigma)$ is bounded along $D$.


Now we recall some identities for pluri-anti-canonical sections.
\begin{lemm}
\label{lemm:3-2}
Let $\omega_i$ be as above and $H_i$ be the associated Hermitian metric on $K_M^{-1}$. Then for any $\sigma\in H^0(M, K_M^{-\ell})$, we have
(in the sense of distribution)
\begin{equation}
\label{eq:bochner-1}
\Delta_{i} ||\sigma||_i^2 \,=\, ||\nabla \sigma||_i^2 \,-\,n \ell \,||\sigma||_i^2
\end{equation}
and
\begin{equation}
\label{eq:bochner-2}
\Delta_{i} ||\nabla \sigma||_i^2 \,=\, ||\nabla^2 \sigma||_i^2 \,-\, ((n+2)\,\ell\,-\,\mu_i)\,||\nabla \sigma||_i^2,
\end{equation}
where $||\cdot||_i$ denotes the Hermitian norm on $K_M^{-\ell}$ induced by $H_i\,=\,H_{\omega_i}$, $\nabla$ denotes the covariant derivative of $H_i$ and $\Delta_i$ denotes the Laplacian of $\omega_i$.
\end{lemm}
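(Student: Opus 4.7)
My plan is to derive both identities pointwise on the smooth locus $M\setminus D$ using the K\"ahler-Einstein equation there, and then extend them across $D$ as distributional identities via a cutoff-and-flux argument.

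On $M\setminus D$, $\omega_i$ is smooth with ${\rm Ric}(\omega_i)=\mu_i\omega_i$, and by construction the Chern curvature of $H_i$, viewed as a Hermitian metric on $K_M^{-1}$, equals $\omega_i$; hence the induced metric on $K_M^{-\ell}$ has Chern curvature $\ell\omega_i$. For a holomorphic $\sigma\in H^0(M,K_M^{-\ell})$, the standard K\"ahler-Bochner identity at once gives
\begin{equation*}
\Delta_i||\sigma||_i^2 \,=\, ||\nabla\sigma||_i^2 \,-\, {\rm tr}_{\omega_i}(\ell\omega_i)\,||\sigma||_i^2 \,=\, ||\nabla\sigma||_i^2 \,-\, n\ell\,||\sigma||_i^2
\end{equation*}
on $M\setminus D$, which is \eqref{eq:bochner-1}. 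For \eqref{eq:bochner-2}, I regard $\nabla\sigma$ as a section of $\Omega^{1,0}M\otimes K_M^{-\ell}$ and apply the Weitzenb\"ock formula. Using holomorphicity of $\sigma$ to rewrite $\bar\nabla\nabla\sigma$ through the $K_M^{-\ell}$ curvature commutator, together with ${\rm Ric}(\omega_i)=\mu_i\omega_i$, a routine tensor-product curvature-trace computation produces the coefficient $-((n+2)\ell-\mu_i)$ and yields \eqref{eq:bochner-2} on $M\setminus D$.

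The main obstacle is the distributional extension across $D$. For any test function $\phi\in C^\infty(M)$, I would introduce a smooth cutoff $\chi_\epsilon$ that vanishes on an $(\epsilon/2)$-tubular neighborhood $N_{\epsilon/2}(D)$ and equals $1$ outside $N_\epsilon(D)$, measured in a fixed smooth background metric. The product $\chi_\epsilon\phi$ is supported in $M\setminus D$, so pairing the pointwise identities against $\chi_\epsilon\phi$ and integrating by parts yields the desired weak identities plus error terms of the form $\int||\sigma||_i^2(\phi\Delta_i\chi_\epsilon+2\nabla\chi_\epsilon\cdot\nabla\phi)\,\omega_i^n$ (and analogues with $||\nabla\sigma||_i^2$). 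The key inputs for bounding these errors are: $||\sigma||_i^2$ is bounded near $D$ (as observed in the paragraph preceding the lemma); in local coordinates $(z_1,\ldots,z_n)$ with $D=\{z_1=0\}$, the only singular component of the inverse $\omega_i$-metric is $g^{1\bar 1}_{\omega_i}\sim|z_1|^{2-2\beta_i}$, while $\omega_i^n\sim|z_1|^{2\beta_i-2}\,dV_{\rm Eucl}$; and consequently $||\nabla\sigma||_i^2$ and $||\nabla^2\sigma||_i^2$ vanish along $D$ at rate $|z_1|^{2-2\beta_i}$ in any fixed holomorphic trivialization of $K_M^{-\ell}$. A direct estimate then shows the flux terms vanish as $\epsilon\to 0$, establishing both identities in the sense of distributions on all of $M$.
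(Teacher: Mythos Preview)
Your approach is essentially the same as the paper's: derive both identities pointwise on $M\setminus D$ (the paper simply cites \cite{tian90} for this) and then extend across $D$ using that $||\sigma||_i^2$ and $||\nabla\sigma||_i^2$ are bounded near $D$. The paper is much terser---it just asserts that boundedness of these quantities (the second ``by a direct computation in local coordinates'') suffices for the distributional extension---whereas you spell out the cutoff-and-flux argument; note that your claimed vanishing rate $|z_1|^{2-2\beta_i}$ is stronger than what the paper states or needs, since mere boundedness already makes the flux terms go to zero.
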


\begin{proof}
On $M\backslash D$, both \eqref{eq:bochner-1} and \eqref{eq:bochner-2} were already derived in \cite{tian90} by direct computations.
Since $||\sigma||_i^2$ is bounded, \eqref{eq:bochner-1} holds on $M$.

By a direct computation in local coordinates, one can also show that $||\nabla\sigma||_i^2$ is bounded along $D$, so \eqref{eq:bochner-2} also holds.
\end{proof}

Applying the standard Moser iteration to \eqref{eq:bochner-1} and \eqref{eq:bochner-2} and using Lemma \ref{lemm:3-1}, we obtain
\begin{coro}
\label{coro:3-1}
There is a uniform constant $C$ such that for any $\sigma \in H^0(M, K_{M}^{-\ell})$, we have
\begin{equation}
\label{eq:esti-1}
\sup_{M}\left( ||\sigma||_i\,+\, \ell ^{-\frac{1}{2}}\,||\nabla \sigma||_i\right ) \,\le\, C\, \ell^{\frac{n}{2}} \,\left (\int _{M} ||\sigma||_i^2 \omega_i^n \right )^{\frac{1}{2}}.
\end{equation}
\end{coro}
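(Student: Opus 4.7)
The plan is to derive both estimates by Moser iteration applied to the Bochner-type inequalities of Lemma \ref{lemm:3-2}, using the uniform Sobolev bound of Lemma \ref{lemm:3-1} as the key functional inequality. The conic structure of $\omega_i$ does not obstruct the argument because the two identities of Lemma \ref{lemm:3-2} already hold in the sense of distributions on the whole of $M$, and both $||\sigma||_i^2$ and $||\nabla\sigma||_i^2$ are globally bounded; hence the integration-by-parts underlying Moser's scheme is valid after testing against $u^{p-1}\chi_\epsilon^2$ for a cutoff $\chi_\epsilon$ supported outside an $\epsilon$-tube around $D$ and letting $\epsilon\to 0$.

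First I would treat $u=||\sigma||_i^2$. From \eqref{eq:bochner-1} one has $\Delta_i u \ge -n\ell\,u$ distributionally. Multiplying by $u^{p-1}$, integrating by parts, applying \eqref{eq:sobolev} to $u^{p/2}$, and iterating with exponents $p_k=(n/(n-1))^k p_0$ starting from $p_0=1$, one obtains in the standard way a constant $C$ (depending only on $n$ and the uniform Sobolev constant) such that
\[
\sup_M u \;\le\; C\,\ell^{\,n}\int_M u\,\omega_i^n,
\]
because each iteration contributes a factor $(C\ell\,p_k)^{1/p_k}$ and $\sum_k p_k^{-1}=n$. Taking square roots yields the first half of \eqref{eq:esti-1}.

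For the gradient term I would run the same machine on $v=||\nabla\sigma||_i^2$. Inequality \eqref{eq:bochner-2} gives $\Delta_i v \ge -((n+2)\ell-\mu_i)\,v$, and since $\mu_i$ is uniformly bounded this is weaker than $\Delta_i v \ge -C\ell\,v$, so Moser iteration produces $\sup_M v \le C\,\ell^{\,n}\int_M v\,\omega_i^n$ with the same power of $\ell$. To convert the right-hand side into an integral of $u$, I would integrate \eqref{eq:bochner-1} over $M$: since $\int_M \Delta_i u\,\omega_i^n=0$ (valid on $M$ by the same cutoff-and-limit argument, using that $|\nabla u|_{\omega_i}$ is in $L^1$ thanks to the conic asymptotics), one gets the identity $\int_M ||\nabla\sigma||_i^2\,\omega_i^n = n\ell \int_M ||\sigma||_i^2\,\omega_i^n$. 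Plugging this in produces $\sup_M v \le C\,\ell^{\,n+1}\int_M u\,\omega_i^n$, and multiplying by $\ell^{-1/2}$ and taking square roots gives the second half of \eqref{eq:esti-1}.

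The main technical point to check is the integration-by-parts near the singular locus $D$. I would address this uniformly by picking cutoffs $\chi_\epsilon$ equal to $1$ off a tubular neighborhood $T_\epsilon$ of $D$, with $|\nabla\chi_\epsilon|_{\omega_i}$ controlled by the reciprocal of the tube width, and then using that both the $\omega_i$-volume of $T_\epsilon$ and the boundary mass of $\partial T_\epsilon$ vanish as $\epsilon\to 0$ — this is the standard device for running De Giorgi--Nash--Moser on spaces with codimension-two singular sets, and it works here precisely because \eqref{eq:bochner-1}, \eqref{eq:bochner-2} and \eqref{eq:sobolev} have already been established on all of $M$, respectively as distributional identities and as an honest Sobolev inequality with $\ell$-independent constant.
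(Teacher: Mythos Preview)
Your proposal is correct and is precisely the approach the paper indicates: it states only that the corollary follows by ``applying the standard Moser iteration to \eqref{eq:bochner-1} and \eqref{eq:bochner-2} and using Lemma \ref{lemm:3-1}.'' Your write-up simply unpacks this, including the power-counting $\sum_k p_k^{-1}=n$, the passage from $\int_M\|\nabla\sigma\|_i^2$ to $n\ell\int_M\|\sigma\|_i^2$ via integration of \eqref{eq:bochner-1}, and the cutoff justification near $D$ that makes the distributional identities usable in the iteration; none of this departs from the paper's intended argument.
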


If $\sigma_i$ is a sequence in $H^0(M, K_M^{-\ell})$ satisfying:
$$\int_M ||\sigma_i||_i^2\,\omega_i^n\,=\, 1,$$
then by Corollary \ref{coro:3-1}, $||\sigma_i||_i$ and their derivative are uniformly bounded. It implies that $||\sigma_i||_i$ are uniformly continuous. Hence,
by taking a subsequence if necessary,
we may assume $||\sigma_i||_i$ converge to a Lipschtz function $F_\infty$ as $i$ tends to $\infty$, moreover, we have
$$\int_{M_\infty} F_\infty^2 \,\omega^n_\infty\,=\,1.$$
In particular, $F_\infty$ is non-zero.

Now we assume $\sigma_i = a_i S $, where $a_i$ are constants and $S$ is a defining section of $D$.
Then $||\sigma_i||_i(x)\,=\,0$ if and only if $x\in D$
.
If $F_\infty(x) \not= 0$ for some $x\in M_\infty\backslash {\cal S}$, then for a sufficiently small $r > 0$, we have
$$ 2\, F_\infty (y) \,\ge \, F_\infty(x)\,>\,0, ~~\forall \,y\in B_r(x,\omega_\infty).$$
This is because $F_\infty$ is continuous. This implies
$$B_r(x,\omega_\infty)\subset M_\infty\backslash {\cal S}.$$
Since $||\sigma_i||_i$ converge to $F_\infty$ uniformly, for $i$ sufficiently large, $||\sigma_i||_i\,>\,0$
on those geodesic balls $B_r(x_i,\omega_i)$ of $(M,\omega_i)$
which converge to $B_r(x,\omega_\infty)$ in the Gromov-Hausdorff topology. It follows that $B_r(x_i,\omega_i) \subset M\backslash D$, that is, each
$B_r(x_i,\omega_i)$ lies in the smooth part of $(M,\omega_i)$. On the other hand, since $x$ is a smooth point of $M_\infty$, by choosing smaller $r$,
we can make the volume of $B_r(x_i,\omega_i)$ sufficiently close to that of corresponding Euclidean ball, then
as one argued in \cite{cheegercoldingtian} by a result of \cite{anderson}, $\omega_i$ restricted to
$B_r(x_i,\omega_i)$ converge to $\omega_\infty$ on any compact subset of $B_r(x,\omega_\infty)$ in the $C^\infty$-topology.
Thus, $\omega_i$ converge to $\omega_\infty$ in the $C^\infty$-topology on the non-empty open subset $M_\infty \backslash F_\infty^{-1}(0)\cup {\cal S}$.

Next we want to show that $F_\infty^{-1}(0)$ does not contain any open subset, or equivalently, $M_\infty \backslash F_\infty^{-1}(0)$ is an open-dense
subset in $M_\infty$. We prove it by contradiction.
If it is false, say $U\subset F^{-1}(0)$ is open, using the fact that $||\sigma_i||_i$ is uniformly bounded from above, we have
\begin{equation}
\label{eq:unbd}
\lim_{i\to \infty} \int_M \log (\frac{1}{i} + ||\sigma_i||_i^2) \,\omega_i^n \,=\,-\infty.
\end{equation}
By a direct computation, we have
$$\omega_i\,+\, \sqrt{-1} \partial\bar\partial \log (\frac{1}{i} + ||\sigma_i||_i^2) \,=\, \frac{\omega_i} {1 + i\, ||\sigma_i||_i^2}\,+\,\frac{i\,D\sigma_i\wedge \overline{D\sigma_i}}{(1 + i \,||\sigma_i||^2_i)^2}\,\ge\,0.$$
It implies
$$\Delta_i \log (\frac{1}{i} + ||\sigma_i||_i ^2) \,\ge\, - n.$$
Using the Sobolev inequality in Lemma \ref{lemm:3-1} and the Moser iteration, we can deduce
$$\sup_M \,\log (\frac{1}{i} + ||\sigma_i||_i^2) \,\le\, C\,\left ( 1\,+\,\int_M \log (\frac{1}{i} + ||\sigma_i||_i^2)\,\omega_i^n\right),$$
where $C$ is a uniform constant. By \eqref{eq:unbd},
$$\lim_{i\to \infty} \sup_M \,\log (\frac{1}{i} + ||\sigma_i||_i^2)\,=\,-\infty.$$
However, since the $L^2$-norm of $||\sigma_i||_i$ is equal to $1$, there is a constant $c$ independent of $i$ such that
$$\sup_M\, \log (\frac{1}{i} + ||\sigma_i||_i^2) \,\ge \, - c.$$
This leads to a contradiction. Therefore, $M_\infty \backslash F_\infty^{-1}(0)$ is dense.

By our definition of the metric $H_i$ associated to $\omega_i$, in local holomorphic coordinates $z_1,\cdots,z_n$ away from $D$, we have
$$ ||\sigma||_i^2\,=\, \left (\left(\det(g_{a\bar b})\right)^{\lambda } |w|^2\right )^{\frac{1}{\mu}}$$
where
$$\sigma \,=\,w \,\frac{\partial}{\partial z_1} \wedge \cdots\wedge
\frac{\partial}{\partial z_1}~~~{\rm and} ~~~\omega_i \,=\, \sqrt{-1} \,\sum_{a,b=1}^n \,g_{a\bar b}\, dz_a\wedge d\bar z_b\,.$$
Since $\omega_i$ converge to $\omega_\infty$ in the $C^\infty$-topology outside ${\cal S}$, it follows from the above that
$\sigma_i$ converge to a holomorphic section $\sigma_\infty$ on $M\backslash F_\infty^{-1}(0)\cup {\cal S}$.\footnote{The singular set ${\cal S}$ may overlap with $F^{-1}_\infty(0)$ along a subset of complex codimension $1$.}
Clearly,
$\sigma_\infty$ is bounded with respect to the Hermitian metric associated to $\omega_\infty$, so it extends to a holomorphic section of $K_{M_\infty}^{-\lambda}$
on the regular part $M\backslash {\cal S}$.

Next we exam the limit of $D$ under the convergence of $(M,\omega_i)$. Since $||\sigma_i||_i\,=\,0$ on $D$, the limit of $D$ must lie in $D_\infty$, where $D_\infty$ denotes the zero set of $F_\infty$. We claim that the limit of $D$ coincides with $D_\infty$. If this is not true, there are $x\in D_\infty$ and $r>0$ such that
$B_{2r}(x, d_\infty)\cap D_\infty$ is disjoint from the limit of $D$. Choose $x_i\in M$ go to $x$ as $(M,\omega_i)$ converge to $(M_\infty, d_\infty)$, then for
$i$ sufficiently large, $B_r(x_i, \omega_i)$ is disjoint from $D$, so lies in the smooth part of $(M,\omega_i)$. The regularity theory in \cite{cheegercoldingtian}
implies that ${\cal S}\cap B_r(x,d_\infty)$ is of complex codimension at least $2$ and near a generic point $y \in B_{r}(x, d_\infty)\cap D_\infty$, $\sigma_\infty$ is holomorphic and defines $D_\infty$, moreover, the convergence of $(M,\omega_i)$ to $(M_\infty,d_\infty)$ is in $C^\infty$-topology and $\sigma_i$ converge
to $\sigma_\infty$ near $y$ , so $\sigma_i$ must vanish somewhere in $B_r(x_i,\omega_i)$, a contradiction. This shows that the limit of $D$ coincides with $D_\infty$.

If $\beta_\infty \,=\,1$, the singular set ${\cal S}$ is of complex dimension at least $2$ and $\sigma_\infty\in H^0(M_\infty, K_{M_\infty}^{-\lambda})$ which consists of all holomorphic sections of $K_{M_\infty}^{-\lambda}$ on $M_\infty\backslash {\cal S}$.
Then $D_\infty$ is simply the divisor $\{\sigma_\infty \,=\,0\}$.

Summarizing the above discussions, we have

\begin{theo}
\label{th:3-2}
Let $(M_\infty,\omega_\infty)$, ${\cal S}$ etc. be as in Theorem \ref{th:3-1}. Then
$(M,\omega_i)$ converge to $(M_\infty,\omega_\infty)$ in the $C^\infty$-topology outside $\bar {\cal S}\cup D_\infty$ for a closed subset
$\bar {\cal S}$ of codimension at least $4$ and $D$ converges to $D_\infty$ in the Gromov-Hausdorff topology.
If $\beta_\infty \,<\,1$, ${\cal S}\,=\,\bar {\cal S} \cup D_\infty$. If $\beta_\infty \,=\,1$, ${\cal S}\,=\,\bar {\cal S}$ and $D_\infty$ is
a divisor of $K_{M_\infty}^{-\lambda}$.\footnote{It follows from the partial $C^0$-estimate in the next section that the same holds even if $\beta_\infty \,<\,1$.}
\end{theo}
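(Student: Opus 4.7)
The plan is to use the sections $\sigma_i = a_i S \in H^0(M, K_M^{-\lambda})$, where $S$ is a defining section of $D$ and $a_i$ is chosen so that $\int_M \|\sigma_i\|_i^2\,\omega_i^n = 1$ with respect to the associated Hermitian metric $H_{\omega_i}$, as a probe that simultaneously controls the smooth convergence of $\omega_i$ and the Gromov--Hausdorff limit of $D$. The Bochner identities of Lemma~\ref{lemm:3-2}, combined with the uniform Sobolev bound of Lemma~\ref{lemm:3-1}, yield via Moser iteration (Corollary~\ref{coro:3-1}) uniform $C^0$ and $C^1$ estimates on $\|\sigma_i\|_i$. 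After passing to a subsequence, $\|\sigma_i\|_i$ will converge uniformly (in the Gromov--Hausdorff sense) to a Lipschitz function $F_\infty$ on $M_\infty$ with $\int_{M_\infty} F_\infty^2\,\omega_\infty^n = 1$; in particular $F_\infty \not\equiv 0$.

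Next I would check that $F_\infty^{-1}(0)$ has empty interior. If some open set were contained in $F_\infty^{-1}(0)$, uniform convergence forces $\int_M \log(i^{-1} + \|\sigma_i\|_i^2)\,\omega_i^n \to -\infty$. A direct computation gives
\[
\omega_i + \sqrt{-1}\,\partial\bar\partial \log(i^{-1} + \|\sigma_i\|_i^2) \ge 0,
\]
hence $\Delta_i \log(i^{-1} + \|\sigma_i\|_i^2) \ge -n$; Moser iteration with the Sobolev constant from Lemma~\ref{lemm:3-1} then drives $\sup_M \log(i^{-1} + \|\sigma_i\|_i^2) \to -\infty$, contradicting the $L^2$-normalization. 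Thus $M_\infty \setminus F_\infty^{-1}(0)$ is open dense, and I define $D_\infty := F_\infty^{-1}(0)$.

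To upgrade Gromov--Hausdorff to $C^\infty$ convergence on $M_\infty \setminus ({\cal S} \cup D_\infty)$, note that at any $x$ in this set $F_\infty > 0$ on a neighbourhood, so $\|\sigma_i\|_i$ is bounded below on approximating balls $B_r(x_i,\omega_i)$. Since $\sigma_i$ is a multiple of $S$, those balls miss $D$ and lie in the smooth part of $(M,\omega_i)$. Because $x$ is also a regular point of $M_\infty$, the volume of $B_r(x_i,\omega_i)$ is almost Euclidean, so Anderson's $\epsilon$-regularity (as used in the proof of Theorem~\ref{th:3-1}) delivers uniform curvature bounds and $C^\infty$ subconvergence of $\omega_i$; the local formula $\|\sigma\|_i^2 = (\det(g_{a\bar b})^\lambda |w|^2)^{1/\mu_i}$ simultaneously yields a limiting local holomorphic section $\sigma_\infty$ of $K_{M_\infty}^{-\lambda}$. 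For the Gromov--Hausdorff limit of $D$, the inclusion $\lim D \subseteq D_\infty$ is immediate from $\|\sigma_i\|_i \equiv 0$ on $D$; for the reverse inclusion, if some $x \in D_\infty$ were missed, a small ball $B_r(x_i,\omega_i)$ would avoid $D$, but the $C^\infty$ convergence near a generic smooth point of $B_r(x,d_\infty) \cap D_\infty$, together with $\sigma_i \to \sigma_\infty$, would force $\sigma_i$ to vanish inside $B_r(x_i,\omega_i)$, a contradiction.

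Finally I would identify $\bar{\cal S}$. When $\beta_\infty = 1$, Theorem~\ref{th:3-1} already gives $\codim {\cal S} \ge 4$; the bounded section $\sigma_\infty$ extends by Hartogs-type removability to a global holomorphic section of $K_{M_\infty}^{-\lambda}$, so $D_\infty$ is a divisor and one sets $\bar{\cal S} = {\cal S}$. When $\beta_\infty < 1$, Theorem~\ref{th:tangentcone}(C4) says every point of ${\cal S}_{2n-2} \setminus {\cal S}_{2n-4}$ has tangent cone $\CC^{n-1} \times {\cal C}'_x$ with ${\cal C}'_x$ a $2$-dimensional flat cone of angle $2\pi\bar\mu$; the slicing argument behind (C4) shows such angles can only originate from the codimension-$1$ conic singularity of $\omega_i$ along $D$. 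Therefore ${\cal S}_{2n-2} \setminus {\cal S}_{2n-4} \subseteq D_\infty$, and setting $\bar{\cal S} := {\cal S}_{2n-4}$ gives ${\cal S} = \bar{\cal S} \cup D_\infty$ with $\codim \bar{\cal S} \ge 4$. I expect the hardest step to be precisely this last identification of the codim-$2$ stratum of ${\cal S}$ with $D_\infty$ in the case $\beta_\infty < 1$, since it depends on the delicate slicing/tangent-cone analysis of Theorem~\ref{th:tangentcone}(C4) to exclude $2$-dimensional flat-cone tangents away from the limiting divisor.
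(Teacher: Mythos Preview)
Your proposal is correct and follows essentially the same approach as the paper: the normalized sections $\sigma_i=a_iS$, the Bochner/Moser estimates of Lemma~\ref{lemm:3-2} and Corollary~\ref{coro:3-1}, the subharmonicity argument for $\log(i^{-1}+\|\sigma_i\|_i^2)$ to rule out interior in $F_\infty^{-1}(0)$, Anderson's regularity on balls missing $D$, and the contradiction argument for $\lim D=D_\infty$ are all exactly what the paper does.

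One small remark on emphasis: you single out the identification $\mathcal{S}_{2n-2}\setminus\mathcal{S}_{2n-4}\subseteq D_\infty$ for $\beta_\infty<1$ as the hardest step, but the paper views $\beta_\infty=1$ as the genuinely new difficulty. When $\beta_\infty<1$, Theorem~\ref{th:3-1} already gives $C^\infty$ convergence outside $\mathcal{S}$ directly (via the volume argument forcing regular points away from $D$), and the decomposition $\mathcal{S}=\bar{\mathcal{S}}\cup D_\infty$ then follows because near points of $\mathcal{S}\setminus D_\infty$ the approximating balls miss $D$, so smooth Cheeger--Colding--Tian theory applies and forces codimension $\ge 4$. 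By contrast, when $\beta_\infty=1$ the singular set $\mathcal{S}$ is already codimension $\ge 4$ by \cite{tianwang}, but a priori $D$ could drift anywhere in $M_\infty$, and Anderson's argument is unavailable because the metrics are conic; the whole $\sigma_i$-machinery is introduced precisely to get around this. Your argument handles both cases correctly; only your assessment of where the difficulty lies differs from the paper's.
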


\section{Partial $C^0$-estimate}

In this section, we prove Theorem \ref{th:main-1}. By our results on compactness of conic K\"ahler-Einstein metrics in last two sections, we need to prove only
the following:

\begin{theo}
\lab{th:4-1}
Let $M$ be a Fano manifold $M$ and $D$ be a smooth divisor whose Poincare dual is $\lambda\, c_1(M)$. Let
$\omega_i$ be a sequence of conic K\"ahler-Einstein metrics on $M$ with conic angle $2\pi \beta_i$ along $D$ satisfying:
$$\lim \beta_i \,=\,\beta_\infty > 0~~~{\rm and}~~~0\,<\,(1-\beta_\infty)\,\lambda\,<\, 1.$$ 
We also assume that $(M,\omega_i)$
converge to a (possibly singular) conic K\"ahler-Einstein manifold $(M_\infty,\omega_\infty)$ as described in Theorem \ref{th:3-2}.
Then there are uniform constants
$c_k=c(k, n, \lambda,\beta_\infty)>0$ for $k\ge 1$ and $\ell_a \to \infty$
such that for $\ell = \ell_a$,

\begin{equation}
\lab{eq: conicpartial}
\rho_{\omega_i, \ell} \,\ge\, c_\ell\,>\, 0.
\end{equation}
\end{theo}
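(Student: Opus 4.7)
The plan is to argue by contradiction, in the spirit of Donaldson--Sun and my earlier work, adapted to the conic setting via the compactness of Theorem \ref{th:3-2}. Suppose the conclusion fails. Then along some $\ell_a \to \infty$ there exist subsequences of $\omega_i$ and points $x_i \in M$ with $\rho_{\omega_i,\ell_a}(x_i) \to 0$. Since $M_\infty$ is compact one may assume $x_i \to x_\infty \in M_\infty$ in the Gromov--Hausdorff sense. The objective is to produce, for every sufficiently large $\ell$ in a chosen subsequence, a holomorphic section $\sigma_i \in H^0(M, K_M^{-\ell})$ satisfying $\int_M \|\sigma_i\|_i^2\,\omega_i^n = 1$ and $\|\sigma_i\|_i(x_i) \ge c(\ell) > 0$ uniformly in $i$. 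Since by the reproducing property one has $\rho_{\omega_i,\ell}(x_i) \ge \|\sigma_i\|_i^2(x_i)/\|\sigma_i\|_{L^2,i}^2$, this immediately contradicts the assumption.

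The construction is local, modelled on a tangent cone $\cC_{x_\infty}$ at $x_\infty$. By Theorem \ref{th:tangentcone}, $\cC_{x_\infty} = \CC^k \times \cC'_{x_\infty}$ carries a K\"ahler Ricci-flat cone metric with K\"ahler form $\omega_{x_\infty} = \sqrt{-1}\,\partial\bar\partial \rho_{x_\infty}^2$ on its regular part. Because the cone is Ricci-flat, its anti-canonical bundle is holomorphically trivial over the regular locus; the weight $e^{-\rho_{x_\infty}^2}$ then defines a Hermitian metric of curvature $\omega_{x_\infty}$, and the constant section $1$ is a Gaussian peak model of value $1$ at the vertex $o$ and $L^2$-mass outside a ball of Gaussian radius $R$ exponentially small in $R$. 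When $x_\infty \in \cS_{2n-2}\cap D_\infty$ the slicing factor $\cC'_{x_\infty}$ is a $2$-dimensional flat cone of angle $2\pi\bar\mu$ with $\bar\mu$ drawn from a finite set (Theorem \ref{th:tangentcone}, C4); the peak section is then an explicit monomial in the uniformizing parameter, and the finitely many cone models are handled separately.

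Next, transfer this model to $M$ by a cut-off plus H\"ormander construction. Choose scales $r_i \to 0$ realizing $\cC_{x_\infty}$, set $\ell_i = \lfloor \epsilon\, r_i^{-2}\rfloor$ for small fixed $\epsilon$, and exploit the smooth convergence of $\omega_i$ to $\omega_\infty$ away from $\bar\cS \cup D_\infty$ (Theorem \ref{th:3-2}) to pull back the peak section to an almost-holomorphic $\tilde\sigma_i \in C^\infty(M, K_M^{-\ell_i})$, cut off outside a geodesic ball of $\omega_i$-radius $\sim R\, r_i$ with $R$ large but fixed. This yields $\|\tilde\sigma_i\|_i(x_i) \ge c > 0$, unit $L^2$-mass after normalization, and $\int_M \|\bar\partial \tilde\sigma_i\|_i^2\,\omega_i^n$ exponentially small in $R$. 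Since the curvature of $H_i^{\otimes \ell_i}$ as a current equals $\ell_i\,\omega_i > 0$, H\"ormander's $L^2$-estimate solves $\bar\partial u_i = \bar\partial \tilde\sigma_i$ on $M \setminus D$ with $\int_M \|u_i\|_i^2\,\omega_i^n \le C \ell_i^{-1} \int_M \|\bar\partial \tilde\sigma_i\|_i^2\,\omega_i^n$. Uniformity in $i$ is obtained by applying the estimate on the smooth approximations $\omega_{i,\delta}$ of Theorem \ref{th:2-1} (which satisfy ${\rm Ric}(\omega_{i,\delta}) \ge \mu_i\,\omega_{i,\delta}$ and hence inherit the uniform Sobolev bound of Lemma \ref{lemm:3-1}) and letting $\delta \to 0$ via Theorem \ref{th:2-5}. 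Because $H_{\omega_i}(\sigma,\sigma)$ is bounded along $D$ for any pluri-anti-canonical section (as noted preceding Lemma \ref{lemm:3-2}), the corrected section $\sigma_i := \tilde\sigma_i - u_i$ extends from an $L^2$-holomorphic section on $M \setminus D$ to a genuine element of $H^0(M, K_M^{-\ell_i})$. Applying the pointwise bound of Corollary \ref{coro:3-1} to $u_i$, together with the exponentially small $\|\bar\partial\tilde\sigma_i\|_{L^2}$, yields $\|u_i\|_i(x_i) \ll \|\tilde\sigma_i\|_i(x_i)$; hence $\|\sigma_i\|_i(x_i) \ge c/2 > 0$, and the contradiction is complete.

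The main obstacle is the uniform validity of H\"ormander's estimate and the cut-off construction across the conic divisor $D$ together with the limit singular set $\bar\cS \cup D_\infty$, especially at points of $\cS_{2n-2}\cap D_\infty$ whose tangent cones contain a $2$-dimensional factor of nontrivial angle. The resolution has three ingredients: the smoothing of Section 2 reduces every uniform estimate to an estimate on honest smooth K\"ahler metrics with Ricci lower bound $\mu_i$; the singular set $\cS$ has Hausdorff codimension $\ge 2$, so standard cut-offs with small gradient energy remain available near it; and the finiteness of admissible cone angles $\bar\mu$ from Theorem \ref{th:tangentcone}, C4, reduces the model peak-section computation to finitely many explicit $2$-dimensional flat cones.
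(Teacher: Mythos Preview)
Your overall strategy matches the paper's: reduce to a local peak-section construction on a tangent cone, transport to $M$, correct via the $L^2$-estimate, and conclude using the gradient bound of Corollary~\ref{coro:3-1}. The contradiction framing versus the paper's direct construction is cosmetic.

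There is, however, a genuine gap. You write that ``the singular set $\cS$ has Hausdorff codimension $\ge 2$, so standard cut-offs with small gradient energy remain available near it.'' This is precisely the step the paper identifies as the main technical obstacle, and it is \emph{not} standard in the relevant case $\beta_\infty < 1$. When the singular set $\cS_x$ of the tangent cone has complex codimension exactly one (as it does along $D_\infty$), a naive radial cut-off has $L^2$ gradient energy that does \emph{not} go to zero. One needs the logarithmic cut-off $\eta(\log(-\log(\rho/\bar\epsilon)))$, and for this to work globally on $\cC_x$ one must control the structure of $\cS_x$: roughly, that outside a closed subcone of codimension $\ge 4$ the set $\cS_x$ behaves like a divisor with controlled $(2n-2)$-volume, so that a Fubini-type integration makes the logarithmic gradient energy small. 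The paper isolates this as Lemma~\ref{lemm:cut-off-2} and devotes the entire Appendix to its proof, constructing local holomorphic maps $F_{\infty,j}$ onto model cones, showing the image of $\cS_x$ is an honest divisor there, and using Lemma~\ref{lemm:est4} to transfer the Euclidean logarithmic-cut-off estimate back to $\cC_x$. Your proposal does not supply any of this, and the phrase ``standard cut-offs'' hides exactly the new work.

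A second, smaller issue: you invoke Corollary~\ref{coro:3-1} to bound $\|u_i\|_i(x_i)$, but that corollary is a Moser-iteration bound for \emph{holomorphic} sections, and the correction $u_i$ is not holomorphic. The paper instead uses local elliptic estimates, after first arranging (via Lemma~\ref{lemm:4-4}) that $\bar\partial u_i = \bar\partial\tilde\tau_i$ has small $C^4$-norm on the inner region $V(x;\delta_0)$; this higher-order control, not just the $L^2$ smallness of $\bar\partial u_i$, is what yields the pointwise bound on $u_i$ near $x_i$.
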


For the readers' convenience, we recall the definition of $\rho_{\omega_i, \ell}$: Let $H_i$ be the Hermitian metric on $K_M^{-1}$ associated to $\omega_i$,
then for any orthonormal basis  $\{S_\alpha\}_{0\le \alpha\le N}$ of $H^0(M, K_M^{-\ell})$ with respect to the inner product induced by $H_i$ and $\omega_i$, we have
\begin{equation}
\lab{eq:4-1}
\rho_{\omega_i,\ell} (x) \,=\,\sum_{\alpha=0}^N\, H_i(S_\alpha,S_\alpha) (x),
\end{equation}

We have shown in last section that the defining sections $\sigma_i$ of $D$ normalized with respect to $H_i$ converge to a holomorphic section $\sigma_\infty$
of $K_{M_\infty}^{-\lambda} $ on $M\backslash {\cal S}$ satisfying: In any local coordinates $z_1,\cdots,z_n$ outside ${\cal S}$, we have
\begin{equation}
\label{eq:4-1}
\left(\det(g_{a\bar b})\right)^{\lambda } |w|^2 \,< \, \infty
\end{equation}
where
$$\sigma_\infty \,=\,w \,\frac{\partial}{\partial z_1} \wedge \cdots\wedge
\frac{\partial}{\partial z_1}~~~{\rm and} ~~~\omega_\infty \,=\, \sqrt{-1}  \,g_{a\bar b}\, dz_a\wedge d\bar z_b\,.$$
Define a Hermitian metric $H_\infty$ on $K_{M_\infty}^{-1}$ on $M_\infty\backslash {\cal S}$ by
\begin{equation}
\label{eq:4-2}
H_\infty\,=\, \tilde H_\infty (\sigma_\infty,\sigma_\infty )^{\frac{1-\beta}{\mu}}\, \tilde H_\infty.
\end{equation}
Here $\tilde H_\infty$ denotes the Hermitian metric induced by the determinant of $\omega_\infty$. The following can be easily proved.

\begin{lemm}
\label{lemm:4-1}
The Hermitian metrics $H_i$ converge to $H_\infty$ on $M_\infty\backslash {\cal S}$ in the $C^\infty$-topology. Moreover,
we have
$$ H_\infty (\sigma_\infty,\sigma_\infty)\,<\, \infty~~~{\rm and}~~~\int_{M_\infty} H_\infty (\sigma_\infty,\sigma_\infty)\,\omega_\infty^n \,=\, 1.$$
\end{lemm}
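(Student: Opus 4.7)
The lemma splits into three assertions: (i) the $C^\infty$-convergence $H_i\to H_\infty$ on $M_\infty\setminus {\cal S}$, (ii) uniform pointwise finiteness of $H_\infty(\sigma_\infty,\sigma_\infty)$, and (iii) the passage of the normalization integral to the limit. The plan is to deduce (i) and (ii) almost directly from what has been set up in Sections 3 and 4, with the real work going into (iii).

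For (i) I unwind
$$
H_i \;=\; \tilde H_i(\sigma_i,\sigma_i)^{(1-\beta_i)/\mu_i}\,\tilde H_i
$$
into three pieces. Theorem \ref{th:3-2} gives $\omega_i\to\omega_\infty$ in $C^\infty$ on $M_\infty\setminus(\bar{\cal S}\cup D_\infty)$, so $\tilde H_i\to\tilde H_\infty$ there in $C^\infty$ as well, since $\tilde H_i$ is determined locally by the volume form $\omega_i^n$. Section 4 gives $\sigma_i\to\sigma_\infty$ in $C^\infty$ on $M_\infty\setminus {\cal S}$, and $(1-\beta_i)/\mu_i\to(1-\beta_\infty)/\mu_\infty$, so raising to this convergent exponent preserves $C^\infty$-convergence on the open set where $\tilde H_\infty(\sigma_\infty,\sigma_\infty)>0$. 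When $\beta_\infty<1$, Theorem \ref{th:3-2} absorbs $D_\infty$ into ${\cal S}$, and nothing more is required. When $\beta_\infty=1$ the exponent tends to $0$ and $H_\infty=\tilde H_\infty$; the uniform upper bound on $\tilde H_i(\sigma_i,\sigma_i)$ coming from Corollary \ref{coro:3-1} then forces $\tilde H_i(\sigma_i,\sigma_i)^{(1-\beta_i)/\mu_i}\to 1$ uniformly on compact subsets of $M_\infty\setminus\bar{\cal S}$, so the convergence extends across $D_\infty$. Statement (ii) follows at once from the identity
$$
\|\sigma_i\|_i^2 \;=\; H_i(\sigma_i,\sigma_i) \;=\; \tilde H_i(\sigma_i,\sigma_i)^{1+(1-\beta_i)/\mu_i}
$$
together with the uniform bound $\sup_M \|\sigma_i\|_i\le C$ obtained from Corollary \ref{coro:3-1} and the normalization $\int_M \|\sigma_i\|_i^2\omega_i^n=1$.

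The main step is (iii): one must pass to the limit in $\int_M \|\sigma_i\|_i^2\,\omega_i^n = 1$. On any compact subset $K\subset M_\infty\setminus {\cal S}$, (i) and the smooth convergence of the volume forms give immediate convergence of the integrand. What remains is to show that for every $\epsilon>0$ one can find a neighborhood $U_\epsilon$ of ${\cal S}$ in $M_\infty$ whose preimage in $(M,\omega_i)$ carries mass less than $\epsilon$ with respect to $\|\sigma_i\|_i^2\,\omega_i^n$, uniformly in $i$. Using the uniform $L^\infty$-bound on $\|\sigma_i\|_i^2$, this reduces to an $\omega_i$-volume estimate for tubular neighborhoods of ${\cal S}$, which follows from Colding's volume convergence theorem (legitimate thanks to the uniform Ricci lower bound $\mu_i\ge c>0$) together with the Hausdorff codimension bound for ${\cal S}$ supplied by Theorems \ref{th:3-1} and \ref{th:tangentcone}. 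Combined with the convergence on the regular set this yields $\int_{M_\infty} H_\infty(\sigma_\infty,\sigma_\infty)\,\omega_\infty^n=1$. This exclusion-of-singularities estimate is the only real obstacle; the rest is a direct consequence of Theorem \ref{th:3-2} and Corollary \ref{coro:3-1}.
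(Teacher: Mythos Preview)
Your proposal is essentially correct and supplies the details the paper omits: the paper gives no proof, stating only that the lemma ``can be easily proved.'' Your decomposition into (i)--(iii), the use of Theorem~\ref{th:3-2} and Corollary~\ref{coro:3-1} for (i) and (ii), and the volume-cutoff argument for (iii) are precisely the natural route. Note in particular that the paper already records $\int_{M_\infty}F_\infty^2\,\omega_\infty^n=1$ in Section~4, so once (i) identifies $F_\infty^2$ with $H_\infty(\sigma_\infty,\sigma_\infty)$ on $M_\infty\setminus{\cal S}$, assertion (iii) follows immediately without reproving the volume estimate. One small correction: since $\sigma_i$ is a section of $K_M^{-\lambda}$, the exponent in your formula for $\|\sigma_i\|_i^2$ should be $1+\lambda(1-\beta_i)/\mu_i=1/\mu_i$ rather than $1+(1-\beta_i)/\mu_i$; this does not affect the argument.
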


Let us first specify the holomorphic sections of $K_{M_\infty}^{-\ell}$ we will use here.
\footnote{This is needed since $M_\infty$ may have singularity along a subset
${\cal S}$ of complex codimension $1$. However, we will prove later that $M_\infty$ is actually smooth outside a subset of complex codimension at least $2$.}
By a holomorphic section of $K_{M_\infty}^{-\ell}$ on $M_\infty$ ($\ell > 0$), we mean a holomorphic section $\sigma$ of $K_{M_\infty}^{-\ell}$
on $M_\infty\backslash {\cal S}$ with $ H_\infty (\sigma,\sigma)$ bounded.

We denote by $H^0(M_\infty, K_{M_\infty}^{-\ell})$ the space of all holomorphic sections of $K_{M_\infty}^{-\ell}$
on $M$. If $M_\infty$ is smooth outside a closed subset of codimension $4$, then it
coincides with the definition we used in literature.

\begin{lemm}
\label{lemm:conv}
For any fixed $\ell \,>\,0$, if $\{\tau_i\}$ is any sequence of $H^0(M,K_M^{-\ell})$ satisfying:
$$\int_M H_i(\tau_i,\tau_i)\,\omega_i^n \,=\,1,$$
then a subsequence of $\tau_i$ converges to a section $\tau_\infty $ in $H^0(M_\infty, K_{M_\infty}^{-\ell})$.

\end{lemm}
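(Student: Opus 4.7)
My plan is to extract a convergent subsequence using uniform sup-norm bounds, identify the limit as a holomorphic section on the regular locus where the convergence of $(\omega_i, H_i)$ is smooth, and then extend across $D_\infty$ using a removable-singularity argument combined with the uniform control on the limiting pointwise norm.

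First I would apply Corollary \ref{coro:3-1} to $\tau_i$ with the fixed $\ell$ and the normalization $\int_M H_i(\tau_i,\tau_i)\,\omega_i^n = 1$ to obtain uniform bounds $\sup_M \|\tau_i\|_i \leq C(n,\ell)$ and $\sup_M \|\nabla \tau_i\|_i \leq C(n,\ell)$. The gradient estimate makes the real-valued functions $\|\tau_i\|_i$ uniformly Lipschitz on $(M,\omega_i)$, so combined with the Gromov--Hausdorff convergence $(M,\omega_i) \to (M_\infty, d_\infty)$ and the Arzela--Ascoli argument in this setting, a subsequence of $\|\tau_i\|_i$ converges uniformly to a bounded Lipschitz function $F_\infty$ on $M_\infty$.

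Next, on any compact set $K \subset M_\infty \setminus (\bar{\cal S} \cup D_\infty)$, Theorem \ref{th:3-2} yields $C^\infty$ convergence $\omega_i \to \omega_\infty$, and Lemma \ref{lemm:4-1} yields $H_i \to H_\infty$ in $C^\infty$. I would choose approximating diffeomorphisms $\Psi_i$ from $K$ into $M \setminus D$ intertwining the complex structures up to a $\bar\partial\Psi_i$-term that vanishes in $C^\infty$. The pullbacks $\Psi_i^*\tau_i$ are then almost-holomorphic sections of a smoothly converging sequence of line bundles, bounded in $L^\infty$. Local Cauchy estimates (or elliptic estimates for $\bar\partial$) give uniform $C^k$ bounds for every $k$, and a diagonal argument extracts a further subsequence with $\tau_i \to \tau_\infty$ in $C^\infty_{\mathrm{loc}}$, where $\tau_\infty$ is a holomorphic section of $K_{M_\infty}^{-\ell}$ on $M_\infty \setminus (\bar{\cal S} \cup D_\infty)$ with $\|\tau_\infty\|_\infty = F_\infty$ there.

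It remains to extend $\tau_\infty$ to all of $M_\infty \setminus {\cal S}$. If $\beta_\infty < 1$ then Theorem \ref{th:3-2} gives ${\cal S} = \bar{\cal S} \cup D_\infty$, so there is nothing further to do, and the $H_\infty$-boundedness is immediate from $F_\infty \leq C$. If $\beta_\infty = 1$, then ${\cal S} = \bar{\cal S}$ and the exponent $(1-\beta_\infty)/\mu_\infty$ in the definition of $H_\infty$ vanishes, so $H_\infty$ extends to a smooth Hermitian metric on $K_{M_\infty}^{-1}$ across $D_\infty \setminus \bar{\cal S}$. Near any point $p \in D_\infty \setminus \bar{\cal S}$, in local holomorphic coordinates on $M_\infty$ with $D_\infty = \{z_1 = 0\}$ and a fixed smooth trivialization of $K_{M_\infty}^{-\ell}$, the section $\tau_\infty$ is a holomorphic function on the complement of $\{z_1=0\}$ whose modulus is bounded above by $F_\infty$ times a smooth nonvanishing factor. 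Riemann's removable singularity theorem then extends $\tau_\infty$ holomorphically across $\{z_1=0\}$. The inequality $H_\infty(\tau_\infty,\tau_\infty) \leq F_\infty^2 \leq C$ on $M_\infty \setminus {\cal S}$ then gives the required membership $\tau_\infty \in H^0(M_\infty, K_{M_\infty}^{-\ell})$.

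The main technical obstacle is the last step when $\beta_\infty = 1$: near $D_\infty$ the convergence of $\omega_i$ is only in the Gromov--Hausdorff topology, so a priori one has no direct handle on the complex structure of $M$ near $D$ as $i \to \infty$, and the argument of Step~2 does not directly produce a limit section there. The resolution is to exploit the fact that the limiting objects $\omega_\infty$ and $H_\infty$ are nonetheless smooth across $D_\infty \setminus \bar{\cal S}$ (since the conic angle closes up in the limit), which reduces the extension problem to a standard bounded-function/removable-singularity argument intrinsic to $M_\infty$.
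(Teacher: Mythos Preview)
Your proposal is correct and follows exactly the approach the paper has in mind: the paper's own proof is simply ``This follows from the estimate in Corollary \ref{coro:3-1} and standard arguments,'' and you have supplied precisely those standard arguments. Your handling of the extension across $D_\infty$ when $\beta_\infty=1$ via the removable-singularity theorem mirrors the identical step the paper carries out earlier in Section~4 for the defining section $\sigma_\infty$.
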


This follows from the estimate in Corollary \ref{coro:3-1} and standard arguments. It implies that $\rho_{\omega_i,\ell}$ are uniformly continuous,
in particular, they converge to a continuous function on $M_\infty$. This function is actually equal to $\rho_{\omega_\infty,\ell}$ as shown in the end of this section, but we do not need this to prove Theorem \ref{th:4-1}.

Thus, in order to prove Theorem \ref{th:4-1}, we only need to show that for a sequence of $\ell$,
\begin{equation}
\label{eq:4-5}
\inf_i \inf_x \rho_{\omega_i,\ell} (x)\, >\, 0.
\end{equation}

Since $\rho_{\omega_i,\ell} $ are uniformly continuous and $M_\infty$ is compact, it suffices
to show that for any $x\in M_\infty$, there is an
$\ell$ and sequence $x_i\in M$ such that $\lim x_i\,=\,x$ and
\begin{equation}
\label{eq:4-6}
\inf_i \rho_{\omega_i,\ell} (x_i)\, > \,0.
\end{equation}

The following lemma provides the $L^2$-estimate for $\bar\partial $-operator on $(M,\omega_i)$. It can be proved by using the smooth approximations
$\tilde\omega_i$ of $\omega_i$
with Ricci curvature bounded from below.
\begin{lemm}
\label{lemm:4-2}
For any $\ell > 0$, if $\zeta$ is a (0,1)-form with values in $K_M^{-\ell}$ and $\bar\partial \zeta = 0$, there is a smooth section $\vartheta$ of $K_M^{-\ell}$ such that
$\bar \partial \vartheta = \zeta$ and
$$\int _M ||\vartheta||_i^2 \,\omega_i^n \,\le \, \frac{1}{\ell + \mu}\,\int_M ||\zeta||_i^2\,\omega_i^n,$$
where $||\cdot||_i$ denotes the norm induced by $H_i$ and $\omega_i$.

\end{lemm}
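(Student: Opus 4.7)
The plan is to derive the estimate from the classical Hörmander $L^2$-theory applied to the smooth approximations $\omega_{i,\delta}$ produced in Theorem \ref{th:2-1}. Each $\omega_{i,\delta}$ is smooth Kähler in $2\pi c_1(M)$ with $\mathrm{Ric}(\omega_{i,\delta})\ge\mu_i\,\omega_{i,\delta}$, and converges to $\omega_i$ in Gromov--Hausdorff as well as $C^\infty_{\mathrm{loc}}(M\setminus D)$. Let $H_{i,\delta}$ denote the smooth Hermitian metric on $K_M^{-1}$ whose curvature is $\omega_{i,\delta}$; then $H_{i,\delta}^{\ell}$ has curvature $\ell\omega_{i,\delta}$ and $H_{i,\delta}\to H_i$ as $\delta\to 0$ in the sense of Theorem \ref{th:2-5}.

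First I would apply Hörmander on each $(M,\omega_{i,\delta})$. The Bochner--Kodaira--Nakano identity together with the pointwise estimate
$$\bigl(\mathrm{Ric}(\omega_{i,\delta})+\ell\,\omega_{i,\delta}\bigr)(u,\bar u)\,\ge\,(\ell+\mu_i)\,\|u\|_{i,\delta}^{2}$$
for $K_M^{-\ell}$-valued $(0,1)$-forms $u$ yields, for any $\bar\partial$-closed $\zeta$, a smooth minimal $L^2$-solution $\vartheta_\delta$ of $\bar\partial\vartheta_\delta=\zeta$ satisfying
$$\int_M\|\vartheta_\delta\|_{i,\delta}^{2}\,\omega_{i,\delta}^{n}\,\le\,\frac{1}{\ell+\mu_i}\int_M\|\zeta\|_{i,\delta}^{2}\,\omega_{i,\delta}^{n}.$$
This is the one point in the argument that uses \emph{both} the Einstein-type lower bound on the Ricci tensor \emph{and} the curvature of the line bundle; the sum $\ell+\mu_i$ appears because $K_M^{-\ell}\otimes K_M^{-1}=K_M^{-(\ell+1)}$ has curvature $(\ell+\mu_i)\omega_{i,\delta}$ when one reads $(0,1)$-forms as $(n,1)$-forms.

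Next I would send $\delta\to 0$ for fixed $i$. By \eqref{eq:2-side-esti} one has $\delta$-uniform two-sided bounds on $\omega_{i,\delta}^{n}$ against $\omega_0^{n}$ multiplied by an integrable factor, and the explicit formula for $H_{i,\delta}$ from Section~2 yields analogous control of the Hermitian pairings; dominated convergence therefore gives
$$\lim_{\delta\to 0}\int_M\|\zeta\|_{i,\delta}^{2}\,\omega_{i,\delta}^{n}\,=\,\int_M\|\zeta\|_i^{2}\,\omega_i^{n}.$$
Along a subsequence $\vartheta_\delta$ admits a weak $L^2_{\mathrm{loc}}$-limit $\vartheta$ on $M\setminus D$, which solves $\bar\partial\vartheta=\zeta$ distributionally and hence smoothly by elliptic regularity for $\bar\partial$, and which satisfies the desired $L^2$-bound on $M\setminus D$ by weak lower semicontinuity. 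Since $\vartheta$ is $L^2$ against a Hermitian metric that is bounded along $D$, and $D$ has real codimension two, the standard removable-singularity theorem for $\bar\partial$-equations upgrades $\vartheta$ to a global smooth section of $K_M^{-\ell}$ on $M$ solving $\bar\partial\vartheta=\zeta$ everywhere.

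The main obstacle is the uniform control in the limit $\delta\to 0$: both the Hermitian metrics $H_{i,\delta}$ and the volume forms $\omega_{i,\delta}^{n}$ separately blow up like $(\delta+\|S\|_0^{2})^{-(1-\beta_i)}$ near $D$, so one has to check that these singular factors cancel in the integrands $\|\zeta\|_{i,\delta}^{2}\omega_{i,\delta}^{n}$ and $\|\vartheta_\delta\|_{i,\delta}^{2}\omega_{i,\delta}^{n}$ to give a $\delta$-independent integrable majorant. This is where the explicit normalization of $H_{i,\delta}$ by the volume form (inherited from the associated-metric construction in Section~4) is essential; once the cancellation is made explicit, the remaining passage to the limit is routine.
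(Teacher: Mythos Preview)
Your approach is exactly the one the paper indicates: the paper gives no detailed proof of this lemma but states just before it that ``It can be proved by using the smooth approximations $\tilde\omega_i$ of $\omega_i$ with Ricci curvature bounded from below,'' which is precisely the strategy you carry out via H\"ormander on $(M,\omega_{i,\delta})$ followed by $\delta\to 0$. Your identification of the constant $(\ell+\mu_i)^{-1}$ from the Bochner--Kodaira term and your discussion of the limiting step (including the cancellation of the singular factors in $H_{i,\delta}$ against those in $\omega_{i,\delta}^n$, and the codimension-two extension across $D$) supply the details the paper omits.
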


We have seen that for any $r_j\mapsto 0$, by taking a subsequence if necessary,
we have a tangent cone ${\cal C}_x$ of $(M_\infty,\omega_\infty)$ at $x$, which is the limit of $(M_\infty, r_j^{-2} \omega_\infty, x)$ in the Gromov-Hausdorff topology, satisfying:

\vskip 0.1in
\noindent
${\bf T}_1$. Each ${\cal C}_x$ is regular outside a closed subcone ${\cal S}_x$ of complex codimension at least $1$. Such a ${\cal S}_x$ is the singular set of ${\cal C}_x$;

\vskip 0.1in
\noindent
${\bf T}_2$. There is an natural K\"ahler Ricci-flat metric $g_x$ on ${\cal C}_x \backslash {\cal S}_x$ which is also a cone metric. Its K\"ahler form $\omega_x$ is equal to $\sqrt{-1} \,\partial\bar\partial \rho_x^2$ on the regular part of ${\cal C}_x$, where $\rho_x$ denotes the distance function from the vertex of ${\cal C}_x$, denoted by $x$ for simplicity.
\vskip 0.1in
We will denote by $L_x$ the trivial bundle ${\cal C}_x\times \CC$ over ${\cal C}_x$ equipped with
the Hermitian metric $e^{-\rho_x^2}\,|\cdot |^2$. The curvature of this Hermitian metric is given by $\omega_x$.

As before, we denote by ${\cal S}_k$ ($k=0,1,\cdots,2n-1$) the subset of $M_\infty$
consisting of points for which no tangent cone splits off a factor, $\RR^{k+1}$, isometrically. Clearly,
${\cal S}_0 \subset {\cal S}_1\subset \cdots \subset {\cal S}_{2n-1}$. It is proved by Cheeger-Colding that ${\cal S}_{2n-1} = \emptyset$, $\dim {\cal S}_k \le k$ and ${\cal S}\,=\,{\cal S}_{2n-2}$.

The following lemma can be proved by using the slicing arguments in \cite{cheegercoldingtian} and the fact that $(M_\infty, \omega_\infty)$ is the limit of
conic K\"ahler-Einstein metrics $(M,\omega_i)$ with cone angle along $2\pi \beta_i$ along $D$.
\begin{lemm}
\label{lemm:4-3} For any $x\,\in\, {\cal S}_{2n-2}\backslash \bigcup_{k< 2n-2} {\cal S}_k$, if ${\cal C}_x \,=\, \CC ^{n-1}\times {\cal C}_x'$, then
$g_x $ is a product of the Euclidean metric on $\CC^{n-1}$ with a flat conic metric
on ${\cal C}_x'$, which is biholomorphic to $\CC$, of angle $2\pi \mu_a$ ($a =1,\cdots, l$), where $\bar\mu=\mu_a$ is given as in Theorem \ref{th:tangentcone}. Moreover,
for any $x\,\in\, {\cal S}\,\subset\, M_\infty$, if ${\cal S}_x$ is of complex codimension $1$, then there is a closed subcone $\bar {\cal S}_x\subset {\cal S}_x$ of complex codimension
at least $2$ such that $g_x$ is asymptotic to the product metric described above at any $y\in {\cal S}_x\backslash \bar {\cal S}_x$, i.e., a tangent cone of $({\cal C}_x,g_x)$
at $y$ is isometric to a product of the Euclidean metric on $\CC^{n-1}$ with a conic metric
on ${\cal C}_x'$ of angle $2 \pi \mu_a\,<\,2\pi$.

\end{lemm}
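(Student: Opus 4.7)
The plan is to extend the slicing argument of Cheeger-Colding-Tian to the conic setting, tracking the defect $2\pi(1-\beta_i)$ contributed by the conic divisor $D$ of each $\omega_i$. The first assertion is essentially a biholomorphic repackaging of Theorem \ref{th:tangentcone}(C4): once it is known that $\mathcal{C}_x = \mathbb{C}^{n-1}\times \mathcal{C}_x'$ with $\mathcal{C}_x'$ a flat $2$-cone of angle $2\pi\bar\mu$, the biholomorphism $\mathcal{C}_x'\cong\mathbb{C}$ comes from the standard classification of flat $2$-cones: in polar coordinates $(r,\theta)$ with $\theta\in[0,2\pi\bar\mu)$, the substitution $z = r^{1/\bar\mu}e^{i\theta/\bar\mu}$ realizes the cone as $\mathbb{C}$ with the metric $\bar\mu^{2}|z|^{2(\bar\mu-1)}\sqrt{-1}\,dz\wedge d\bar z$. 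The bounds $\bar\beta_\infty\le\bar\mu\le\beta_\infty$ together with the integrality constraint $1-\bar\mu = m(1-\beta_\infty)$, $m\ge 1$, force $\bar\mu$ into a finite list $\{\mu_a\}_{a=1}^l$.

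For the moreover part, fix $x\in\mathcal{S}$ with $\mathcal{S}_x$ of complex codimension $1$. I would define $\bar{\mathcal{S}}_x$ as the union of the deeper Cheeger-Colding strata of $\mathcal{C}_x$ of complex codimension at least $2$ together with the subset of the top stratum of $\mathcal{S}_x$ where iterated tangent cones fail to split maximally; both pieces have complex codimension at least $2$ by the Cheeger-Colding dimension estimate $\dim\mathcal{S}_k\le k$ applied to $\mathcal{C}_x$. For $y\in\mathcal{S}_x\setminus\bar{\mathcal{S}}_x$ and any sequence $r_j\to 0$, the iterated tangent cone $\mathcal{C}_y$ of $(\mathcal{C}_x,g_x)$ at $y$ exists, and the Cheeger-Colding cone-splitting applied to the two available cone rays (the radial ray at $x$, which survives cone-rescaling at $y$, and the radial ray at $y$) yields $\mathcal{C}_y = \mathbb{C}^{n-1}\times \mathcal{C}_y''$ with $\mathcal{C}_y''$ Ricci-flat of real dimension $2$, hence flat.

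To identify the angle $2\pi\bar\mu'$ of $\mathcal{C}_y''$ as one of the $\mu_a$, I would run a Gauss-Bonnet computation on small holomorphic $2$-disks in the approximating sequence. Pre-approximate each $\omega_i$ by the smooth $\omega_{i,\delta}$ of Section 2 and extract a diagonal subsequence of $2$-disks $\Sigma_{i,\delta}\subset (M,\omega_{i,\delta})$ transverse to $D$ which, after the cone rescaling at $y$, converges to a flat round disk around the vertex of $\mathcal{C}_y''$. Gauss-Bonnet on $\Sigma_{i,\delta}$ receives, in the limit $\delta\to 0$, a concentrated contribution $2\pi(1-\beta_i)$ at each transverse intersection with $D$; letting $i\to\infty$ along the diagonal, the interior curvature and boundary geodesic curvature integrals converge to their Euclidean values, so the vertex defect $2\pi(1-\bar\mu')$ equals $m'\cdot 2\pi(1-\beta_\infty)$ where $m'\ge 1$ counts the branches of the limit of $D$ at the vertex of $\mathcal{C}_y''$. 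Together with a Bishop-Gromov lower bound $\bar\mu'\ge\bar\beta_\infty$, this places $\bar\mu'$ in $\{\mu_a\}$.

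The main obstacle is justifying the Gauss-Bonnet passage through the two nested Gromov-Hausdorff convergences (first $(M,\omega_i)\to(M_\infty,\omega_\infty)$, then the cone rescaling at $y$), and in particular ensuring that the number $m'$ of divisor branches at the vertex of $\mathcal{C}_y''$ is a well-defined finite integer. This uses the smooth convergence $\omega_i\to\omega_\infty$ outside $\bar{\mathcal{S}}\cup D_\infty$ from Theorem \ref{th:3-2} to identify, at any generic $y\in\mathcal{S}_x\setminus\bar{\mathcal{S}}_x$, the limit divisor $D_\infty$ with a reduced analytic subset having locally only finitely many branches, so that the concentration terms of the Gauss-Bonnet formula on $\Sigma_{i,\delta}$ really do collect into a single integer multiple of $(1-\beta_\infty)$ in the limit.
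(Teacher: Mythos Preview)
Your proposal is correct and follows essentially the same approach as the paper. The paper does not prove this lemma in detail: it states only that it ``can be proved by using the slicing arguments in \cite{cheegercoldingtian} and the fact that $(M_\infty,\omega_\infty)$ is the limit of conic K\"ahler-Einstein metrics $(M,\omega_i)$ with cone angle $2\pi\beta_i$ along $D$,'' which is precisely the Gauss--Bonnet--on--$2$--slices computation you outline, with the conic divisor contributing a defect $2\pi(1-\beta_i)$ per transverse intersection; the first assertion is, as you say, just a restatement of {\bf C4} in Theorem~\ref{th:tangentcone}.

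One minor remark on your cone-splitting step: the sentence about ``the radial ray at $x$ surviving cone-rescaling at $y$'' is imprecise as a mechanism for producing the full $\CC^{n-1}$ factor. The radial direction at the vertex $o$ of $\cC_x$ does become a line in any tangent cone at $y\neq o$, but that alone yields only one $\RR$ factor. The correct reason $\cC_y$ splits off $\RR^{2n-2}$ is simply that $y$ lies in the top stratum $\cS_{2n-2}\setminus\bigcup_{k<2n-2}\cS_k$ of $\cC_x$, so by the Cheeger--Colding definition of the strata every tangent cone at $y$ already splits $\RR^{2n-2}$; together with the complex structure (as in {\bf C2}) this gives $\CC^{n-1}$. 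This does not affect the validity of your argument.
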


\begin{rema}\label{rema:angle}
As we said after Theorem \ref{th:tangentcone}, by the volume comparison, we know $\bar\beta_\infty\,\le\,\mu_a\,\le\, \beta_\infty$ for some $\bar\beta_\infty$ 
depending only on the diameter and volume of $(M_\infty,\omega_\infty)$. However, in our proof, we may assume that $\beta_\infty \ge 1 - \lambda^{-1} + \epsilon$ 
for some $\epsilon > 0$, so $\bar\beta_\infty$ can be assume to be uniform. When $\beta_\infty=1$, all $\mu_a = 1$, so there is only one. If $\beta_\infty < 1$,
since $(1-\mu_a) = m_a (1-\beta_\infty)$ for some integer $m_a$, there is a bound on $l$ as well. In fact, one should be able to prove that there is a uniform bound
on $l$ depending only on $\lambda$. 
\end{rema}

Without loss of generality, in the following, for each $j$, we set $k_j$ to be the integral part of $r_j^{-2}$.

Now we fix some notations: For any $ \epsilon \,>\,0$, we put
$$V(x; \epsilon)\,=\,\{ \,y \,\in \, {\cal C}_x \,|\, y\,\in\, B_{\epsilon^{-1}}(0,g_x)\,\backslash \,\overline{B_{\epsilon}(0,g_x)},\,\, d(y, {\cal S}_x )\, > \,\epsilon\,\,\},$$
where $B_R(o,g_x)$ denotes the geodesic ball of $({\cal C}_x, g_x)$ centered at the vertex and with radius $R$.

If ${\cal C}_x$ has isolated singularity, then ${\cal S}_x\,=\,\{o\}$ and
$$V(x;\epsilon)\,=\, \{\, y\,\in\, {\cal C}_x \,|\, y\,\in\, B_{\epsilon^{-1}}(0,g_x)\,\backslash \,\overline{B_{\epsilon}(0,g_x)}\,\,\}.
$$
Let $r_j^{-2}$ be the above sequence such that $(M_\infty, r^{-2}_j \omega_\infty, x)$ converges to $({\cal C}_x, g_x, o)$.
By \cite{cheegercoldingtian},
for any $\epsilon > 0$, whenever $i$ is sufficiently large, there are diffeomorphisms $\phi_j: V(x;\epsilon)\mapsto  M_\infty\backslash {\cal S}$,
where ${\cal S}$ is the singular set of $M_\infty$,
satisfying:
\vskip 0.1in
\noindent
(1) $d(x, \phi_j(V(x; \epsilon))) \,<\, 10 \epsilon r_j$ and $\phi_j(V(x;\epsilon)) \subset B_{(1+\epsilon^{-1}) r_j}(x)$, where
$B_R(x)$ the geodesic ball of $(M_\infty, \omega_\infty)$ with radius $R$ and center at $x$;
\vskip 0.1in
\noindent
(2) If $g_\infty$ is the K\"ahler metric with the K\"ahler form $\omega_\infty$ on $M_\infty\backslash {\cal S}$, then
\begin{equation}
\label{eq: bound-1}
\lim_{j\to \infty} ||r_j^{-2} \phi_j^*g_\infty - g_x ||_{C^6(V(x; \frac{\epsilon}{2}))} \,=\,0,
\end{equation}
where the norm is defined in terms of the metric $g_x$.

\begin{lemm}
\label{lemm:4-4}
For any $\delta$ sufficiently small, there are a sufficiently large $\ell \,=\,k_j$ and an isomorphism $\psi $ from the trivial bundle ${\cal C}_x\times \CC$ onto
$K_{M_\infty}^{- \ell}$
over $V(x;\epsilon)$ commuting with $\phi\,=\,\phi_j$ satisfying:
\begin{equation}
\label{eq:est-2}
||\psi(1)||^{2} \,=\, e^{-\rho_x^2} ~~~{\rm and}~~~ ||\nabla \psi ||_{C^4(V(x; \epsilon))} \,\le \, \delta,
\end{equation}
where $||\cdot||^2$ denotes the induced norm on $K_{M_\infty}^{-\ell}$ by $\omega_\infty$, $\nabla$ denotes the covariant derivative with respect to the norms
$||\cdot||^2$ and $e^{-\rho_x^2}\, |\cdot |^2$.
\end{lemm}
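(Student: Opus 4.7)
The plan is to construct $\psi$ by matching the Chern connections on the two Hermitian holomorphic line bundles $L_x$ (on $V(x;\epsilon)\subset{\cal C}_x$) and $\phi^*K_{M_\infty}^{-\ell}$, exploiting the fact that for $\ell=k_j$ their curvatures differ only by a small $(1,1)$-form. Indeed, since $\ell r_j^2=1+O(r_j^2)$, equation \eqref{eq: bound-1} gives
$$\ell\,\phi^*\omega_\infty-\omega_x\,=\,(\ell r_j^2)(r_j^{-2}\phi^*\omega_\infty-\omega_x)\,+\,(\ell r_j^2-1)\omega_x,$$
which is small in $C^4(V(x;\epsilon/2))$ as $j\to\infty$.

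First I would verify that $\phi^*K_{M_\infty}^{-\ell}$ admits a global holomorphic frame $e$ over (a slight shrinking of) $V(x;\epsilon)$. By Lemma \ref{lemm:4-3} the regular part of ${\cal C}_x$ has the product structure $\CC^{n-1}\times({\cal C}'\setminus\{o\})$ and is homotopy equivalent to $S^1$ (or a point), so $H^2(V(x;\epsilon),\ZZ)=0$; topological triviality of $\phi^*K_{M_\infty}^{-\ell}$ upgrades to holomorphic triviality after controlling the relevant Dolbeault cohomology. Writing $\phi^*H_\infty^{\otimes\ell}(e,e)=e^{-\Phi}$ gives $\sqrt{-1}\,\partial\bar\partial\Phi=\ell\,\phi^*\omega_\infty$. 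Next I would solve $\sqrt{-1}\,\partial\bar\partial w=\ell\,\phi^*\omega_\infty-\omega_x$ on a slightly larger region with an $L^2$-orthogonality normalization against pluriharmonic functions; standard elliptic theory then yields $w$ small in $C^5(V(x;\epsilon))$. Since $\Phi-\rho_x^2-w$ is pluriharmonic, it may (after absorbing the $\pi_1$-monodromy into the choice of initial frame) be written as $2\,\mathrm{Re}(H)$ for a holomorphic $H$; replacing $e$ by $e':=e^{-H}e$ produces a new frame with potential $\Phi'=\rho_x^2+v$, where $v=w$ is small in $C^5$.

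Finally I would define $\psi:L_x\to\phi^*K_{M_\infty}^{-\ell}$ over $V(x;\epsilon)$ by $\psi(1):=e^{v/2}\,e'$. A direct check gives
$$\|\psi(1)\|^2\,=\,e^v\cdot e^{-\Phi'}\,=\,e^{-\rho_x^2},$$
verifying the first identity in \eqref{eq:est-2}. Viewing $\psi$ as a section of $\phi^*K_{M_\infty}^{-\ell}\otimes L_x^*$ with connection form $\theta_K-\theta_L=-\partial v$ in the frame $e'\otimes e_0^*$, a short computation yields
$$\nabla\psi\,=\,\tfrac{1}{2}(\bar\partial-\partial)v\cdot\psi,$$
hence $\|\nabla\psi\|_{C^4(V(x;\epsilon))}\le C(\epsilon)\,\|v\|_{C^5}\le\delta$ for $j$ large enough. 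The main obstacle I expect is the topological step: ensuring holomorphic triviality of $\phi^*K_{M_\infty}^{-\ell}$ on the non-simply-connected region $V(x;\epsilon)$ and correctly decomposing the pluriharmonic part of $\Phi-\rho_x^2-w$ in the presence of the $S^1$-monodromy of ${\cal C}'\setminus\{o\}$. This is handled by the explicit tangent-cone structure from Lemma \ref{lemm:4-3} (which keeps $\pi_1$ at worst $\ZZ$) together with a suitable initial choice of frame that absorbs the monodromy factor.
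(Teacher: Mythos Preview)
Your proposal has two genuine gaps. First, and most seriously, you misread Lemma \ref{lemm:4-3}: the product structure ${\cal C}_x=\CC^{n-1}\times{\cal C}_x'$ holds only for $x$ in the top stratum ${\cal S}_{2n-2}\setminus\bigcup_{k<2n-2}{\cal S}_k$, not for an arbitrary $x\in M_\infty$. For general $x$ the tangent cone can have a singular set ${\cal S}_x$ of complex codimension one whose geometry is not a single $\CC^{n-1}$, so your claims that $V(x;\epsilon)$ is homotopy equivalent to $S^1$, that $H^2(V(x;\epsilon),\ZZ)=0$, and that $\pi_1$ is ``at worst $\ZZ$'' are all unjustified. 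The paper deals with this by a \v{C}ech-type construction: build local isometries $\tilde\psi_\alpha$ on small convex balls by parallel transport, observe that the transition functions are close to unit constants, and hence assemble them into an isomorphism onto a \emph{flat} line bundle $F$; the obstruction is then a representation $\rho:\pi_1(V(x;\epsilon))\to S^1$, and one kills it by replacing $\ell$ with a multiple $k\ell$ chosen so that $\ell\mu_a$ is close to an integer for each cone angle $\mu_a$ (equivalently $\ell\beta_\infty$ close to an integer). Your sketch never produces this arithmetic constraint on $\ell$, yet it is exactly what pins down which $\ell=k_j$ work.

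Second, your argument treats $\phi=\phi_j$ as holomorphic: you speak of a global holomorphic frame of $\phi^*K_{M_\infty}^{-\ell}$, Dolbeault cohomology on $V(x;\epsilon)$, and a decomposition $\Phi-\rho_x^2-w=2\,\mathrm{Re}(H)$ with $H$ holomorphic. But $\phi$ is only a smooth diffeomorphism with $r_j^{-2}\phi^*g_\infty$ close to $g_x$ in $C^6$; the pulled-back complex structure $\phi^*J_\infty$ is close to, but not equal to, $J_x$. One can in principle repair this by working with the Chern connection of $(\phi^*K_{M_\infty}^{-\ell},\phi^*H_\infty^{\otimes\ell})$ relative to $\phi^*J_\infty$ and tracking the small $(J_x-\phi^*J_\infty)$-errors, but this is additional work you have not done. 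The paper's parallel-transport construction sidesteps the issue entirely: it is purely Riemannian and needs only that the curvature $\ell\,\phi^*\omega_\infty$ of the pulled-back connection is $C^4$-close to $\omega_x$, which follows directly from \eqref{eq: bound-1}.
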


\begin{proof} The arguments of its proof are pretty standard, so we just outline it.
%
%
%
We cover $V(x; \epsilon)$ by finitely many geodesic balls $B_{s_\alpha}(y_\alpha)$ ($1\le \alpha \le N$) satisfying:

\vskip 0.1in
\noindent
(i) The closure of each $B_{2s_\alpha}(y_\alpha)$ is strongly convex and contained in ${\rm Reg}({\cal C}_x)$;.

\vskip 0.1in
\noindent
(ii) The half balls $B_{s_\alpha/2}(y_\alpha)$ are mutually disjoint.
\vskip 0.1in

Now we choose $\ell\,=\,\ell_j$ sufficiently large and construct $\psi$.

First we construct $\tilde \psi_\alpha$ over each $B_{2 s_\alpha}(y_\alpha)$.
For any $y\in B_{2 s_\alpha}(y_\alpha)$, let $\gamma_y\subset B_{2 s_\alpha}(y_\alpha)$ be the unique minimizing geodesic from
$y_\alpha$ to $y$. We define $\tilde\psi_\alpha$ as follows:
First we define
$\tilde \psi_\alpha (1)\in L|_{\phi(y_\alpha)}$ such that
$$||\tilde \psi_\alpha(1)||^2 \,=\, e^{-\rho_x^2(y_\alpha)},$$
where $L\,=\,K_{M_\infty}^{-\ell}$. Next, for any $y\in U_\alpha$, where $U_\alpha\,=\,B_{2 s_\alpha}(y_\alpha)$,
define
$$\tilde \psi_\alpha: \CC \mapsto L|_y,~~~\tilde \psi_\alpha (a(y)) = \tau(\phi(y)),$$
where $a(y)$ is the parallel transport of $1$ along $\gamma_y$ with respect to the norm $e^{-\rho_x^2}\, |\cdot |^2$ and $\tau(\phi(y))$ is
the parallel transport of $\psi(1)$ along $\phi\circ\gamma_y$ with respect to the norm $||\cdot||^2$.

Clearly, we have the first equation in \eqref{eq:est-2}. The estimates on derivatives can be done as follows: If $a: U_\alpha \mapsto U_\alpha\times \CC$ and $\tau: U_\alpha \mapsto \phi^*L|_{U_\alpha}$ are two sections such that $\tilde \psi_\alpha (a)\, =\, \tau$, then we have
the identity:
$$\nabla \tau \,=\, \nabla\tilde \psi_\alpha (a) \,+\, \tilde\psi_\alpha (\nabla a),$$
where $\nabla$ denote the covariant derivatives with respect to the given norms on line bundles ${\cal C}_x\times \CC$ and $L$.
By the definition, one can easily see that $\nabla \tilde\psi_\alpha (y_\alpha) \equiv 0$. To estimate $\nabla\tilde\psi_\alpha$ at $y$,
we differentiate along $\gamma_y$ to get
$$\nabla_T\nabla_X \tau \,=\, \nabla_T(\nabla_X\tilde\psi_\alpha (a)) + \tilde\psi_\alpha (\nabla_T \nabla_X a),$$
where $T$ is the unit tangent of $\gamma_y$ and $X$ is a vector field along $\gamma_y$ with $[T,X]=0$.
Here we have used the fact that $\nabla_T\tilde\psi_\alpha =0$ which follows from the definition.
Using the curvature formula, we see that it is the same as
$$k \phi^*\omega_\infty (T,X)\,\tilde\psi_\alpha (a) \,=\, \nabla_T(\nabla_X\tilde \psi_\alpha (a)) \,+\, \omega_x(T,X)\, a.$$
Using the fact that $\omega_x$ is the limit of $k \phi^* \omega_\infty$,
we can deduce from the above that $\nabla_T(\nabla_X\tilde \psi_\alpha(a))$ converges to $0$ as $i$ tends to $\infty$. Since $\nabla_X\tilde \psi_\alpha = 0$ at $y_\alpha$,
we see that $||\nabla \tilde \psi_\alpha ||_{C^0(U_\alpha)}$ can be made sufficiently small. The higher derivatives can be bounded in a similar way.

Next we want to modify each $\tilde\psi_\alpha$. For any $\alpha, \beta$, we set
$$\theta_{\alpha\gamma} \,=\, \tilde\psi_\alpha^{-1}\circ \tilde\psi_\gamma: U_\alpha\cap U_\gamma \mapsto S^1.$$
Clearly, we have
$$\theta_{\alpha\kappa} \,=\,\theta_{\alpha\gamma}\cdot\theta_{\gamma\kappa}~~~{\rm on}~~U_\alpha\cap U_\gamma\cap U_\kappa,$$
so we have a closed cycle $\{\theta_{\alpha\gamma}\}$. By the derivative estimates on each $\tilde\psi_\alpha$, we know that each $\theta_{\alpha\gamma}$ is close to a constant. Therefore, we
can modify $\tilde\psi_\alpha$'s such that each transition function $\theta_{\alpha\gamma}$ is a unit constant, that is, we can construct $\zeta_\alpha:
U_\alpha \mapsto S^1$ such that if we replace each $\tilde\psi_\alpha$ by $\tilde \psi_\alpha\cdot \zeta_\alpha$, the corresponding transition functions
are constant. Moreover we can dominate $||\nabla \zeta_\alpha ||_{C^3}$ by the norm $||\nabla \tilde \psi_\alpha ||_{C^3}$ (possibly) on a slightly larger ball.

The cycle $\{\theta_{\alpha\gamma}\}$ of constants gives rise to a flat bundle $F$, and we have constructed
an isomorphism
$$\xi: F\mapsto K_{M_\infty}^{-\ell}$$
over an neighborhood of $\overline{V(x;\epsilon)}$ satisfying all the estimates in \eqref{eq:est-2}.

If we replace $\ell $ by $k \ell  $, we get an analogous isomorphism
$$\xi: F^k  \mapsto K_{M_\infty}^{-k \ell}.$$
Since the flat bundle $F$ is given by a representation
$$\rho:\pi_1(V(x;\epsilon))\mapsto S^1, $$
there is an $k$ such that $F^k$ is essentially trivial, i.e., the corresponding transition functions are
in a neighborhood of the identity in $S^1$. Then we can further modify $\tilde\psi_\alpha$ such that $\theta_{\alpha\gamma}\,=\,1$ for any $\alpha$ and $\gamma$.
So we can get the required $\psi$ by setting $\psi \,=\, \tilde\psi_\alpha $ on $V(x;\epsilon)\cap B_{s_\alpha}(x_\alpha)$.

In fact, one can show that either of the following conditions holds for :

\vskip 0.1in
\noindent
(1) There is a tangent cone ${\cal C}_x$ of the form $\CC^{n-1}\times {\cal C}_x'$ for a 2-dimensional flat cone ${\cal C}_x'$ of angle $2\pi \mu_a$, where
$\mu_a$ are given in Lemma \ref{lemm:4-3} for $a =1,\cdots, l$;

\vskip 0.1in
\noindent
(2) There is a closed subcone $\bar {\cal S}_x\subset {\cal S}_x$ of codimension at least $4$ such that for every $y\in {\cal S}_x\backslash \bar {\cal S}_x$,
any tangent cone ${\cal C}_y $ of ${\cal C}_x$ at $y$ is of the form $\CC^{n-1}\times {\cal C}_y'$ for a 2-dimensional flat cone ${\cal C}_y'$ of angle $2\pi \mu_a$, where
$\mu_a$ are given in Lemma \ref{lemm:4-3} for $a =1,\cdots, l$.
Moreover, ${\cal C}_x\backslash \bar {\cal S}_x$ has finite fundamental group of order $\nu \ge 1$.

\vskip 0.1in
Thus we just need to take $\ell$ to be a multiple of $\nu$ such that for $a=1,\cdots,l$,
$\ell \mu_a$ are sufficiently close to $1$ modulo $\ZZ$ in the above construction of $\psi$. Since $\mu_a = 1 - m + m \beta_\infty$ for some integer,
the second condition is the same as requiring that $\ell \beta_\infty$ are sufficiently close to $1$ modulo $\ZZ$. 
\end{proof}

As for smooth K\"ahler-Einstein metrics, we will apply the $L^2$-estimate to proving \eqref{eq:4-6}, consequently, the partial $C^0$-estimate for
conic K\"ahler-Einstein metrics. The method is standard and resembles the one we used for Del-Pezzo surfaces in \cite{tian89}. First we construct
an approximated holomorphic section $\tilde\tau$ on $M_\infty$, then one can perturb it into a holomorphic section $\tau$ by the $L^2$-estimate for
$\bar\partial$-operators, finally, one uses the derivative estimate in Corollary \ref{coro:3-1} to conclude that $\tau(x)\,\not=\,0$.


Let $\epsilon\,>\,0$ and $\delta\,>\,0$ be sufficiently small and be determined later.
We fix $\ell$ to be the integral part of $r^{-2}$ and $r\,=\,r_j$ for a sufficiently large $j$ which may depend
on $\epsilon$ and $\delta$. Choose $\phi$ and $\psi$ by Lemma \ref{lemm:4-4}, then
there is a section
$\tau \,=\, \psi( 1 )$ of $K_{M_\infty}^{-\ell}$ on $\phi(V(x; \epsilon))$ satisfying:
$$||\tau||^2\,=\,e^{-\rho_x^2}.$$
By Lemma \ref{lemm:4-4}, for some uniform constant $C$, we have
$$||\bar\partial \tau||\, \le C \,\delta . $$

Now let us state a technical lemma.
\begin{lemm}
\label{lemm:cut-off-2}
For any $\bar \epsilon \,>\,0$, there is a smooth function $\gamma_{\bar\epsilon}$ on ${\cal C}_x$ satisfying:

\vskip 0.1in
\noindent
(1) $\gamma_{\bar\epsilon }(y)\,=\,1$ for any $y$ with $d(y,{\cal S}_x)\,\ge\,\bar\epsilon$, where $d(\cdot,\cdot)$ is the distance of $({\cal C}_x,g_x)$ ;

\vskip 0.1in
\noindent
(2) $0\,\le\,\gamma_{\bar\epsilon} \,\le\,1$ and $\gamma_{\bar\epsilon} (y)\,=\,0$ in an neighborhood of ${\cal S}_x$;

\vskip 0.1in
\noindent
(3) $|\nabla \gamma_{\bar\epsilon}|\,\le\, C$ for some constant $C\,=\,C(\bar\epsilon)$ and
$$\int_{B_{{\bar\epsilon}^{-1}}(o,g_x)} \,|\nabla \gamma_{\bar\epsilon}|^2\, \omega_x^n\,\le\,\bar\epsilon.$$
\end{lemm}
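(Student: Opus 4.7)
The plan is to construct $\gamma_{\bar\epsilon}$ as a logarithmic cut-off in the distance to $\mathcal{S}_x$, exploiting the fact that by C1 of Theorem \ref{th:tangentcone} the singular set $\mathcal{S}_x$ has real codimension at least $2$ in $\mathcal{C}_x$. Only such a logarithmic profile (not a linear one) can make the Dirichlet integral small in the critical codimension-$2$ case.

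Concretely, I would fix a smooth profile $\chi : \mathbb{R}\to[0,1]$ with $\chi\equiv 0$ on $(-\infty,0]$, $\chi\equiv 1$ on $[1,\infty)$ and $|\chi'|\le 2$. For a parameter $\delta\in(0,\bar\epsilon)$ to be chosen, set
$$\gamma_{\bar\epsilon}(y) \,=\, \chi\!\left(\frac{\log(d(y,{\cal S}_x)/\delta)}{\log(\bar\epsilon/\delta)}\right),$$
where $d(\cdot,{\cal S}_x)$ is a smoothing (by convolution in local normal coordinates on the regular part) of the Lipschitz distance function to ${\cal S}_x$ with respect to $g_x$. Properties (1) and (2) are then immediate: $\gamma_{\bar\epsilon}=0$ where $d(y,{\cal S}_x)\le\delta$, $\gamma_{\bar\epsilon}=1$ where $d(y,{\cal S}_x)\ge\bar\epsilon$, and $0\le\gamma_{\bar\epsilon}\le 1$. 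The pointwise gradient bound $|\nabla\gamma_{\bar\epsilon}|\le 2/(\delta\log(\bar\epsilon/\delta))$ follows from $|\chi'|\le 2$ together with $|\nabla d(\cdot,{\cal S}_x)|\le 1$, and becomes the constant $C(\bar\epsilon)$ once $\delta=\delta(\bar\epsilon)$ is fixed.

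For the integral bound in (3), I would combine the elementary estimate
$$\int_{B_{\bar\epsilon^{-1}}(o,g_x)}|\nabla\gamma_{\bar\epsilon}|^2\,\omega_x^n \,\le\, \frac{4}{\log^2(\bar\epsilon/\delta)}\int_{\{\delta\le d(y,{\cal S}_x)\le\bar\epsilon\}\cap B_{\bar\epsilon^{-1}}(o,g_x)}\frac{\omega_x^n}{d(y,{\cal S}_x)^2}$$
with the co-area formula and a tubular-volume estimate of the form
$$\mathrm{Vol}\bigl(\{y\in B_{\bar\epsilon^{-1}}(o,g_x)\,:\,d(y,{\cal S}_x)<s\}\bigr)\,\le\,C(\bar\epsilon)\,s^{2},\qquad 0<s\le\bar\epsilon.$$
These together give $\int|\nabla\gamma_{\bar\epsilon}|^2\omega_x^n\le C(\bar\epsilon)/\log(\bar\epsilon/\delta)$, which is made $\le\bar\epsilon$ by choosing $\delta$ sufficiently small.

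The main obstacle is justifying the quadratic tubular volume estimate, since ${\cal S}_x$ need not be a smooth submanifold and may itself be stratified. I would reduce to the smooth Minkowski estimate by invoking Lemma \ref{lemm:4-3}: at every point of ${\cal S}_x\setminus\bar{\cal S}_x$ a tangent cone to $\mathcal{C}_x$ splits as $\mathbb{C}^{n-1}\times\mathcal{C}_x'$ with $\mathcal{C}_x'$ a $2$-dimensional flat cone of angle $2\pi\mu_a<2\pi$, so locally ${\cal S}_x$ has real dimension $2n-2$ and the tubular neighborhood is modeled on a flat $2$-cone cross $\mathbb{C}^{n-1}$, which has precisely quadratic volume growth in $s$. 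The deeper stratum $\bar{\cal S}_x$ has complex codimension at least $2$ in $\mathcal{C}_x$, hence has tubular volume $O(s^{4})$ and contributes a negligible term. A Vitali-type covering of a neighborhood of ${\cal S}_x\cap B_{\bar\epsilon^{-1}}(o,g_x)$ by balls on which this product structure is almost realized, together with the scaling invariance of $(\mathcal{C}_x,g_x)$, yields the required global bound (with $C(\bar\epsilon)$ depending on the minimum angle $\min_a\mu_a$, which is uniform by Remark \ref{rema:angle}).
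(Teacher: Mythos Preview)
Your overall strategy---a logarithmic cut-off in the distance to $\mathcal{S}_x$, reduced via co-area to a quadratic tubular-volume bound---is sound and in fact coincides with one of the two routes the paper outlines in its Appendix (see Lemma~\ref{lemm:a2-a3}). The profile you wrote is equivalent, after reparametrization, to the paper's double-logarithmic cut-off $\eta(\log(-\log(\rho/\bar\epsilon)))$.

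There is, however, a genuine gap in your justification of the tubular-volume estimate. You assert that $\bar{\mathcal S}_x$, being of complex codimension $\ge 2$, has tubular volume $O(s^4)$. But what is available from Cheeger--Colding theory is only that $\bar{\mathcal S}_x$ has \emph{Hausdorff} dimension $\le 2n-4$; this does \emph{not} by itself give a Minkowski-content bound of the form $\mathrm{Vol}(T_s(\bar{\mathcal S}_x))\le C s^4$, or even $\le C s^2$. The two notions differ precisely in singular limit spaces of this type, and bridging them requires either quantitative stratification (Cheeger--Naber) or a separate argument.

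The paper avoids this issue by treating the two strata differently. It first covers $\bar{\mathcal S}_x\cap B_{\bar\epsilon^{-1}}(o,g_x)$ by balls $B_{r_a}(y_a,g_x)$ with $\sum_a r_a^{2n-3}\le 1$---which uses only the vanishing of $(2n{-}3)$-dimensional Hausdorff measure, not any Minkowski bound---and builds a preliminary cut-off $\chi=\prod_a\chi_a$ supported away from these balls with $\int|\nabla\chi|^2\omega_x^n$ small. Only \emph{then}, on the compact set $K=\overline{B_{\bar\epsilon^{-1}}}\setminus\bigcup_a B_{r_a/2}(y_a)$ disjoint from $\bar{\mathcal S}_x$, does it invoke the quadratic tubular-volume estimate $\mathrm{Vol}(T_r(\mathcal{S}_x)\cap K)\le C_K r^2$ (Lemma~\ref{lemm:a2-a3}), whose proof is a blow-up argument using the known model $\mathbb C^{n-1}\times\mathcal C'_y$ at each point of $\mathcal S_x^0$. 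Your Vitali-covering sketch is in the right spirit for this last step, but it must be carried out \emph{after} excising neighborhoods of $\bar{\mathcal S}_x$, not on the full ball.

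For comparison, the paper's \emph{primary} proof in the Appendix takes a different route altogether: rather than estimating tubular volumes, it constructs local holomorphic maps $F_{\infty,j}:B_{7s_j}(y,g_x)\to\mathbb C^n$ (as limits of pluri-anti-canonical embeddings of $(M,\omega_i)$) which carry $\mathcal S_x$ onto an honest local divisor $D_j^n$, and then builds the cut-off from a holomorphic defining function $f_b$ of $D_j^n$ via $\eta(\log(-\log|f_b|/\bar\epsilon))$, patched by a partition of unity. The Dirichlet integral is controlled by an integration-by-parts identity (Lemma~\ref{lemm:est4}) that reduces everything to the Euclidean model. This bypasses the need for any regularity of the distance function to $\mathcal S_x$.
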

\begin{proof}
This is rather standard and has been known to me for quite a while. This is based on the fact that the Poincare metric on a punctured disc has finite volume.

First we consider the simplest case that ${\cal S}_x\,=\,\CC^{n-1}$, i.e., ${\cal C}_x$ is of the form $\CC^{n-1}\times {\cal C}_x'$, where ${\cal C}_x'$ is biholomorphic to $\CC$.
Moreover, the cone metric $g_x$ coincides with the standard cone metric
$$ g_{\bar\beta}\,=\,\sum _{i=1}^{n-1} dz_id\bar z_i\,+\, (d\rho^2 \,+\, \bar\beta^2 \rho^2 d\theta^2),$$
where $z_1,\cdots,z_{n-1}$ are coordinates of $\CC^{n-1}$ and $\bar\beta$ is one of $\mu_a$ given in Lemma \ref{lemm:4-3}. 
Clearly, $\rho \,=\, d(y,{\cal S}_x)$.

We denote by $\eta$ a cut-off function: $\RR \mapsto \RR$ satisfying:
$0\,\le\, \eta \,\le\, 1$, $|\eta'(t)| \,\le\, 1$ and
$$\eta(t)\,=\, 0~~{\rm for}~~t \,>\, \log(-\log \delta^3) ~~{\rm and}~~\eta(t) \,=\, 1 ~~{\rm for}~~ t \,< \,\log (-\log \delta) . $$
Here $\delta \,<\, 1/3$ is to be determined.
Now we define as follows: If $\rho(y) \,\ge\, \bar\epsilon /3$, put $\gamma_{\bar\epsilon}(y)\,=\,1$ and if $\rho(y) \,< \, \bar\epsilon $
$$\gamma_{\bar\epsilon}(y)\,=\, \eta \left ( \log\left (-\log \left(\frac{\rho(y)}{\bar\epsilon} \right)\right)\right).$$
Clearly, $\gamma_{\bar\epsilon}$ is a smooth function and we have
$$\gamma_{\bar\epsilon}(y)\,=\,1~~{\rm if}~~\rho(y) \,\ge\, \frac{\bar\epsilon}{3}~~~{\rm and}~~~\gamma_{\bar\epsilon}(y)\,=\,0~~{\rm if}~~\rho(y) \,\le\, \delta^3\bar\epsilon.$$
Furthermore, the support of $|\nabla \gamma_{\bar\epsilon}|(y)\,=\,0$ is contained in the region where
$\delta^3\bar\epsilon\,<\,\rho(y) \,< \,\delta \bar \epsilon$. In the region, we have
$$|\nabla \gamma_{\bar\epsilon}|\,\le\, \frac{1}{\rho (-\log \frac{\rho}{\bar \epsilon})}.$$
It follows that
$$\int_{B_{{\bar\epsilon}^{-1}}(o,g_x)} \,|\nabla \gamma_{\bar\epsilon}|^2\, \omega_x^n\,\le\,\frac{a_{n-1}}{ \bar\epsilon^{2n-2}} \int_{\delta^3}^{\delta}
\frac{dr}{r (-\log r)^2}\,\le\, \frac{a_{n-1} }{\bar\epsilon^{2n-2} (-\log \delta)},$$
where $a_{n-1}$ denotes the volume of the unit ball in $\RR^{2n-2}$.

Now choose $\delta$ such that $a_{n-1}\,\le\,\bar\epsilon^{2n-1} (-\log \delta)$, then we have
$$\int_{B_{{\bar\epsilon}^{-1}}(o,g_x)} \,|\nabla \gamma_{\bar\epsilon}|^2\,\le\,\bar\epsilon.$$
Clearly, we also have $|\nabla \gamma_{\bar\epsilon}|\,\le\,C$ for some $C\,=\,C(\bar\epsilon)$.

In general, as we have shown in Section 3 by using the arguments of \cite{cheegercoldingtian}, 
${\cal S}_x$ is a union of ${\cal S}_x^0$ and $\bar {\cal  S}_x$, where
$\bar {\cal S}_x$ is a closed subcone
and ${\cal S}^0_x$ is an open subcone of ${\cal S}_x$ which consists of all $y\in {\cal S}_x$ such that a tangent cone of $({\cal C}_x,g_x)$
at $y$ is isometric to $\CC^{n-1}\times {\cal C}_y'$ with the standard metric $g_{\bar\beta}$, 
where $(1-\bar\beta) = k ( 1-\beta_\infty)$ for some integer $k$.
Furthermore, $\bar {\cal S}_x$ is of complex codimension at least $2$.

We expect the following:
\vskip 0.1in
\noindent
${\bf A}_1$ ${\cal C}_x$ is a variety near ${\cal S}_x^0$ and ${\cal S}^0_x$ is a subvariety.
\vskip 0.1in
This can be proved by establishing a local version of Theorem \ref{th:conj-2} and by using the simplest case of
Lemma \ref{lemm:cut-off-2}. We refer the readers to Remark \ref{rema:c-x} for more discussions. 
Now we explain how to derive Lemma \ref{lemm:cut-off-2} under Assumption ${\bf A}_1$. 
This is intended for illustrating the idea of the proof of Lemma \ref{lemm:cut-off-2} before 
getting too tedious arguments based on known techniques.
We will complete the proof of Lemma \ref{lemm:cut-off-2}
by using an analogous, but weaker, version of Assumption ${\bf A}_1$ in Appendix.

Clearly, ${\bf A}_1$ implies the following;
\vskip 0.1in
\noindent
${\bf A}_1'$. ${\cal S}_x$ can be written as a union of two subcones ${\cal S}_{x,1}$ and ${\cal S}_{x,2}$ such that
${\cal S}_{x,2}$ is a closed subcone of complex codimension at least $2$ and
${\cal C}_x$ is smooth near ${\cal S}_{x,1}$ which is a smooth divisor.
\vskip 0.1in

For any small $\epsilon_0 >0$, since ${\cal S}_{x,2}$ has vanishing Hausdorff measure of dimension strictly bigger than $2n-4$, we can find a finite cover of ${\cal S}_{x,2} \cap B_{\bar\epsilon^{-1}}(x,g_x)$ by balls $B_{ r_a}(y_a,g_x)$ ($a\,=\,1,\cdots,l$)
satisfying:

\vskip 0.1in
\noindent
(i) $y_a \in {\cal S}_{x,2}$ and $2 r_a \le \epsilon_0$;

\vskip 0.1in
\noindent
(ii) $B_{r_a/2}(y_a,g_x)$ are mutually disjoint;

\vskip 0.1in
\noindent
(iii) $\sum_a r_a^{2n-3} \,\le\, 1$;

\vskip 0.1in
\noindent
(iv) The number of overlapping balls $B_{2r_a}(y_a,g_x)$ is uniformly bounded.
\vskip 0.1in

We denote by $\bar\eta$ a cut-off function:  $\RR \mapsto \RR$
satisfying: $0\,\le\, \bar \eta \,\le\, 1$, $|\bar \eta'(t)| \,\le\, 2$ and
$$\bar\eta(t)\,=\, 1~~{\rm for}~~t \,>\, \frac{3}{2} ~~{\rm and}~~\bar\eta(t) \,=\, 0 ~~{\rm for}~~ t \,\le \,1. $$

Put
$$\chi_a(y)\,=\,\bar\eta \left(\frac{d(y , y_a)}{r_a}\right)~~{\rm if}~y\in B_{2 r_a}(y_a,g_x)~~~{\rm and}~~~\chi_a(y)\,=\,1~~{\rm otherwise}.$$
Clearly, $\chi_a\equiv 0$ on $B_{r_a}(y_a,g_x)$. By (iv), near any $y$, the number of non-vanishing $\chi_a$ is uniformly bounded by $A$, so the product
function $\chi\,=\,\prod_a \chi_a$ is smooth and vanishes near ${\cal S}_{x,2}\cap B_{\bar\epsilon^{-1}}(x,g_x)$, furthermore, we have
\begin{equation}
\label{eq:prod-1}
\int_{{\cal C}_x} |\nabla \chi|^2 \, \omega_x^n \,\le \,A\,\sum_a \int_{B_{r_a}(y_a,g_x)} |\nabla \chi_a|^2 \, \omega_x^n\,\le\, C \,\epsilon_0,
\end{equation}
where $C$ is a constant which depends on $c$ and $A$.

We still denote by $\eta$ the cut-off function given above. Now we put $\rho(y)=d(y, {\cal S}_{x,1})$.
Now we define as follows: If $\rho(y) \,\ge\, \bar\epsilon /3$, put $\gamma_{\bar\epsilon} (y)\,=\, \chi (y)$ and if $\rho(y)\,<\,\bar\epsilon$, put
\begin{equation}
\label{eq:gamma-1}
\gamma_{\bar\epsilon}(y)\,=\, \chi(y)\,\eta \left ( \log\left (-\log \left(\frac{\rho(y)}{\bar\epsilon} \right)\right)\right).
\end{equation}
Clearly, $\gamma_{\bar\epsilon}$ is smooth. If we choose $\epsilon_0$ and $\delta$ sufficiently small, we have
$\gamma_{\bar\epsilon}(y)\,=\,1$ for any $y$ with $d(y, {\cal S}_x) \,\ge\, \bar\epsilon$, also $\gamma_{\bar\epsilon}$ vanishes in a neighborhood of ${\cal S}_x$.
Furthermore, by using \eqref{eq:prod-1}, the Fubini theorem and our assumption ${\bf A}_1$, we can show
$$\int_{B_{{\bar\epsilon}^{-1}}(o,g_x)} \,|\nabla \gamma_{\bar\epsilon}|^2\, \omega_x^n\,\le\, C' \left (\epsilon_0\,+\, \frac{1}{-\log \delta}\right ),$$
where $C'$ is a constant which may depend on $\bar\epsilon$. Then
the lemma follows under Assumption ${\bf A}_1$ if $\epsilon_0$ and $\delta$ are sufficiently small.

\end{proof}
\vskip 0.1in
Now assuming Lemma \ref{lemm:cut-off-2}, we prove the partial $C^0$-estimate.

First we define $\eta$ to be a cut-off function satisfying:
$$\eta(t)\,= \,1~~{\rm for}~~t\,\le\, 1,~~\eta(t)\,=\,0~~{\rm for}~~ t\,\ge\, 2~~{\rm and}~~|\eta'(t)|\,\le\, 1.$$
Choose $\bar\epsilon$ such that $V(x; \epsilon)$ contains the support of $\gamma_{\bar \epsilon}$ constructed in Lemma \ref{lemm:cut-off-2}
and $\gamma_{\bar \epsilon} \,=\,1$ on $V(x; \delta_0)$, where $\delta_0\, >\,0$ is determined later. Clearly,
we can choose $\bar\epsilon$ as small as we want if $\epsilon$ is sufficiently small.

We define for any $y\,\in \,V(x;  \epsilon)$
$$
\tilde \tau (\phi(y))\,=\,\eta(2 \delta \rho_x(y))\, \eta( 2 \delta \rho_x(y)^{-1} ) \, \gamma_{\bar\epsilon}(y)
\,\tau (\phi(y)).
$$
Clearly, $\tilde \tau$ vanishes outside $\phi(V(x; \epsilon))$, therefore, it extends to a smooth section of $K_{M_\infty}^{-\ell}$ on
$M_\infty$. Furthermore, $\tilde \tau $ satisfies:
\vskip 0.1in
\noindent
(i) $\tilde \tau \,=\, \tau $ on $\phi(V(x; \delta_0))$;

\vskip 0.1in
\noindent
(ii) There is an $\nu\,=\,\nu(\delta,\epsilon)$ such that
$$\int_{M_\infty} ||\bar\partial \tilde \tau||^2\, \omega_\infty^n \,\le \, \nu\, r^{2n-2}.$$
Note that we can make $\nu$ as small as we want so long as $\delta$, $\epsilon$ and $\bar \epsilon$ are sufficiently small.
\vskip 0.1in

Since $(M\backslash D,\omega_i)$ converge to $(M_\infty\backslash S, \omega_\infty)$ and
the Hermitian metrics $H_i$ on $K_M^{-1}$ converge to $H_\infty$ on $M_\infty\backslash S$ in the $C^\infty$-topology.
Therefore, there are diffeomorphisms
$$\tilde\phi_i : M_\infty\backslash {\cal S}\,\mapsto\, M\backslash T_i(D) $$
and smooth isomorphisms
$$F_i:K_{M_\infty}^{-\ell}\,\mapsto\, K_M^{-\ell} $$
over $M\backslash T_i(D)$, where $T_i(D)$ is the set of all points
within distance $\delta_i$ from $D$ with respect to the metric $\omega_i$, where $\delta_i \,>\,0$ and $\lim \delta_i\, =\, 0$,
satisfying:

\vskip 0.1in
\noindent
${\bf C}_1$:  $\tilde\phi_i(M_\infty\backslash T_{\delta_i}({\cal S}))\,\subset \,M\backslash T_\delta (D)$,
where $T_{\delta_i}({\cal S})\,=\,\{x\in M_\infty~|~d_\infty(x, {\cal S})\,\le\, \delta_i\}$;

\vskip 0.1in
\noindent
${\bf C}_2$:  $\pi_i \circ F_i\,=\, \tilde\phi_i\circ \pi_\infty$, where $\pi_i$ and $\pi_\infty$ are corresponding projections;

\vskip 0.1in
\noindent
${\bf C}_3$: $||\tilde\phi_i^*\omega_i - \omega_\infty||_{C^2(M_\infty\backslash T_{\delta_i}({\cal S}))} \,\le\, \delta_i$;

\vskip 0.1in
\noindent
${\bf C}_4$: $||F_i^* H_i - H_\infty||_{C^4(M_\infty\backslash T_i({\cal S}))} \,\le\, \delta_i$.

\vskip 0.1in
We may assume that $i$ is sufficiently large so that $\phi(V(x;\epsilon))\,\subset \,M\backslash T_i({\cal S})$.
Put $\tilde\tau_i\,=\,F_i(\tilde\tau)$, then we deduce from the above
\vskip 0.1in
\noindent
(i') $\tilde \tau_i \,=\, F_i(\tau) $ on $\tilde\phi_i (\phi(V(x; \delta_0)))$;

\vskip 0.1in
\noindent
(ii') For $i$ sufficiently large, we have
$$\int_{M_i} ||\bar\partial \tilde \tau_i||_i^2\, \omega_i^n \,\le \, 2 \nu\, r^{2n-2},$$
where $||\cdot||_i$ denotes the Hermitian norm corresponding to $H_i$.
\vskip 0.1in

By the $L^2$-estimate in Lemma \ref{lemm:4-2}, we get a section $v_i$ of $K_{M_i}^{-\ell }$ such that
$$\bar\partial v_i \,=\, \bar\partial \tilde \tau_i$$
and
$$\int_{M_\infty} ||v_i||_i^2 \,\omega_i^n \,\le \, \frac{1}{\ell} \int_{M_i} ||\bar\partial \tilde \tau_i||_i^2 \,\omega_\infty^n \,\le\, 3 \nu\, r^{2n}  .$$
Here we have used the fact that $\ell$ is the integral part of $r^{-2}$.

Put $\sigma_i \,=\, \tilde \tau_i \,- \,v_i$, it is a holomorphic section of $K_{M_i}^{-\ell}$.
By (i) and Lemma \ref{lemm:4-4}, the $C^4$-norm of $\bar\partial v_i$ on $\tilde\phi_i(\phi(V(x; \delta_0)))$ is bounded from above by
$c \delta $ for a uniform constant $c$. By the standard elliptic estimates, we have
$$\sup _{\tilde\phi(\phi(V(x; 2\delta_0)\cap B_1(o,g_x)))} ||v_i||_i^2 \,\le \, C \,(\delta_0 r)^{-2n}\, \int_{M_i} ||v_i||_i^2\, \omega_i^n
\,\le\, C \,\delta_0 ^{-2n} \,\nu .$$
Here $C$ denotes a uniform constant. For any given $\delta_0$, if $\delta$ and $\epsilon$ are sufficiently small, then we can make $\nu$ such that
$$ 8 C\,\nu\,\le\, \delta_0^{2n}.$$
Then we can deduce from the above estimates
$$||\sigma_i||_i \,\ge\, ||F_i(\tau)||_i\,-\,||v_i||_i\,\ge \,\frac{1}{2}~~~{\rm on}~~\tilde\phi_i(\phi(V(x; \delta_0)\cap B_1(o,g_x))).$$
On the other hand, by applying the derivative estimate in Corollary \ref{coro:3-1} to $\sigma_i$, we get
$$\sup_{M_i} ||\nabla \sigma_i||_i \,\le\, C'
\ell ^{\frac{n+1}{2}} \left (\int_{M_i} ||\sigma_i||_i^2\,\omega_i^n\right )^{\frac{1}{2}} \,\le\, C'\, r^{-1} .$$
Since the distance $d(x, \phi(\delta_0 u)) $ is less than $ 10 \delta_0 r$ for some $u \,\in\,\partial B_1(o,g_x)$, if $i$ is sufficiently large,
we deduce from the above estimates
$$||\sigma_i||_i(x_i)\, \ge\, 1/4 - C'\,\delta_0,$$
hence, if we choose $\delta_0$ such that $C' \delta_0 < 1/8$, then $\rho_{\omega_i,\ell} (x_i) > 1/8$. Theorem \ref{th:main-1}, i.e., the partial $C^0$-estimate
for conic K\"ahler-Einstein metrics, is proved.

As indicated in \cite{tian09} for smooth K\"ahler-Einstein metrics, by the arguments in the proof of the partial $C^0$-estimate, we can prove
the following regularity for $M_\infty$:

\begin{theo}
\lab{th:conj-2}
The Gromov-Hausdorff limit $M_\infty$ is a normal variety embedded in some $\CC P^N$ and $S$ is a subvariety consisting a divisor $D_\infty$ and a subvariety
of complex codimension at least $2$. Moreover, $D_\infty$ is the limit of $D$ under the Gromov-Hausdorff convergence.
\end{theo}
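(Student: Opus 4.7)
The plan is to convert the just-proved partial $C^0$-estimate into an embedding of $M_\infty$ as an algebraic subvariety, exactly in the spirit of the strategy sketched in \cite{tian09} for smooth K\"ahler-Einstein limits. Fix $\ell=\ell_a$ from the sequence in Theorem \ref{th:main-1}, chosen large enough so that $K_M^{-\ell}$ is very ample on $M$ and the estimate $\rho_{\omega_i,\ell}\ge c_\ell>0$ holds uniformly in $i$. For each $i$ choose an orthonormal basis $\{S_\alpha^{(i)}\}_{0\le\alpha\le N}$ of $H^0(M,K_M^{-\ell})$ with respect to the inner product induced by $H_i$ and $\omega_i$; these yield holomorphic embeddings $\Phi_i\colon M\hookrightarrow\CC P^N$. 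By Lemma \ref{lemm:conv}, after passing to a subsequence, $S_\alpha^{(i)}$ converges to $S_\alpha^{(\infty)}\in H^0(M_\infty,K_{M_\infty}^{-\ell})$, and the partial $C^0$-estimate passes to the limit to give $\sum_\alpha H_\infty(S_\alpha^{(\infty)},S_\alpha^{(\infty)})\ge c_\ell$ on \emph{all} of $M_\infty$ (including ${\cal S}$). Hence a continuous limit map $\Phi_\infty\colon M_\infty\to\CC P^N$ is globally defined.

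Next I would check that $\Phi_\infty$ is a homeomorphism onto its image. For injectivity, given $x\ne y$ in $M_\infty$, the peak-section construction from the proof of Theorem \ref{th:4-1} (applied to $x$ and $y$ separately and possibly raising $\ell$ to some multiple) produces, via the $L^2$-estimate of Lemma \ref{lemm:4-2}, sections of $K_{M_\infty}^{-\ell}$ that have different values at $x$ and $y$; together with Theorem \ref{th:3-2} this separates the two points in $\CC P^N$. The image $\Phi_\infty(M_\infty)$ is then the Gromov-Hausdorff (and flat-cycle) limit of the smooth subvarieties $\Phi_i(M)$, all of the same degree, so by Bishop's compactness theorem for analytic cycles of bounded volume and Chow's theorem, $V:=\Phi_\infty(M_\infty)$ is a projective algebraic subvariety of $\CC P^N$.

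To upgrade $V$ to a \emph{normal} variety, I would exploit that $M_\infty\setminus{\cal S}$ is a smooth K\"ahler-Einstein manifold on which $\Phi_\infty$ is a local biholomorphism by the $C^\infty$-convergence of Theorem \ref{th:3-2}, and that ${\cal S}$ has real Hausdorff codimension at least $2$. Let $\widetilde V$ be the normalization of $V$; the inverse of $\Phi_\infty$ on the smooth locus extends across ${\cal S}$ as a continuous map into $\widetilde V$ by Riemann's second extension theorem combined with the Bishop-type removable singularity statement for codimension-$2$ subsets, giving a homeomorphism $M_\infty\cong\widetilde V$. Combined with injectivity of $\Phi_\infty$, this forces $V=\widetilde V$, i.e.\ $V$ is normal. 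I expect this to be the main technical hurdle, since one must exclude a priori the possibility that $V$ has singularities in codimension $1$ coming from branches collapsing together under the Gromov-Hausdorff convergence; the uniform two-sided bounds on $\rho_{\omega_i,\ell}$ and the sharp separation of peak sections are what prevent this.

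Finally, for the structure of ${\cal S}$: the normalized defining section $\sigma_\infty\in H^0(M_\infty,K_{M_\infty}^{-\lambda})$ produced in Section 4 vanishes precisely along $D_\infty$, which by Theorem \ref{th:3-2} is the Gromov-Hausdorff limit of $D$. Once $M_\infty$ is a normal variety, $\{\sigma_\infty=0\}=D_\infty$ is automatically a Weil divisor in $M_\infty$. It remains to show ${\cal S}\setminus D_\infty$ has complex codimension at least $2$. By Theorem \ref{th:tangentcone} (\textbf{C4}), at every $x\in {\cal S}_{2n-2}\setminus\bigcup_{k<2n-2}{\cal S}_k$ the tangent cone is $\CC^{n-1}\times{\cal C}_x'$ with ${\cal C}_x'$ a $2$-dimensional flat cone of angle $2\pi\bar\mu<2\pi$. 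Applying the peak-section construction of Lemma \ref{lemm:4-4} near such an $x$ and following the limit of $D$ in the tangent-cone picture shows $x\in D_\infty$. Hence ${\cal S}\setminus D_\infty\subseteq{\cal S}_{2n-4}$, which has complex codimension at least $2$, and the theorem follows.
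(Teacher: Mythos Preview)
Your overall strategy agrees with the paper's: pass the Kodaira embeddings to a limit map $\Phi_\infty\colon M_\infty\to\CC P^N$ using the partial $C^0$-estimate, prove it is a homeomorphism onto a subvariety, then establish normality and decompose ${\cal S}$. But two of your steps are genuinely incomplete, and the paper handles them differently.

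\textbf{Injectivity at a fixed $\ell$.} Your peak-section argument separates $x$ from $y$ only after raising $\ell$ to a multiple that depends on $d_\infty(x,y)$; this does not yet show that a \emph{single} $\Phi_\infty$ is injective. The paper closes this gap with an effective finite-generation lemma based on the Skoda--Siu estimate (Lemma~\ref{lemm:chili}): every $\varsigma\in H^0(M,K_M^{-(n+1+l)m})$ can be written as $\sum_a h_a\,\sigma_{i,a}$ with $h_a\in H^0(M,K_M^{-(n+l)m})$ and $L^2$-norms controlled uniformly in $i$. Combined with the inclusion $\psi_{\infty,mk}^{-1}(\psi_{\infty,mk}(x))\subseteq\psi_{\infty,m}^{-1}(\psi_{\infty,m}(x))$, this forces $\psi_{\infty,m(n+1)}$ to separate points once $\psi_{\infty,m(n+1+l)}$ does for some large $l$. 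You are missing this reduction.

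\textbf{Normality.} Your plan to lift to the normalization $\widetilde V$ and extend across ${\cal S}$ by Riemann's second extension theorem cannot work as written: ${\cal S}$ contains $D_\infty$ and hence has \emph{complex} codimension only $1$ (real codimension $2$), precisely the regime where Riemann extension fails and where separate branches can collapse together. You flag this as the main hurdle but do not resolve it. The paper's argument is different and more direct: normality amounts to local connectedness of $M_\infty\setminus{\cal S}$; at any $x\in{\cal S}_{2n-2}\setminus{\cal S}_{2n-4}$ the tangent cone is $\CC^{n-1}\times{\cal C}_x'$ by property \textbf{C4} of Theorem~\ref{th:tangentcone}, and its regular part is connected, so $M_\infty\setminus{\cal S}$ is locally connected near $x$. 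The rest of ${\cal S}$ already has complex codimension $\ge 2$. (The paper also notes that Colding--Naber convexity of the regular set gives an alternative route.)

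For the decomposition of ${\cal S}$, the paper's argument is close to yours but simpler: once the partial $C^0$-estimate makes $||\cdot||_\infty$ a continuous Hermitian metric on $K_{M_\infty}^{-1}$, the normalized defining sections $\sigma_i$ of $D$ converge to $\sigma_\infty\in H^0(M_\infty,K_{M_\infty}^{-\lambda})$ with $D_\infty=\{\sigma_\infty=0\}$; away from the limit of $D$ the metrics $\omega_i$ are smooth K\"ahler--Einstein, so the regularity theory of \cite{cheegercoldingtian} applies directly and forces the remaining singular set into ${\cal S}_{2n-4}$. No appeal to Lemma~\ref{lemm:4-4} is needed here.
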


\begin{proof}
For the readers' convenience, we include a proof. Let us recall some well-known facts (cf, \cite{tian09}).
For any $i$ and sufficiently large $\ell$, we can choose
an orthonormal basis $\{\sigma_{i,\ell}\}$ of $H^0(M, K_M^{-\ell})$ with respect to $\omega_i$ and use this to define a Kodaira embedding
$$\psi_{i,\ell}: M \,\mapsto\, \CC P^{N_\ell},~~~{\rm where}~N_\ell + 1 \,=\, \dim H^0(M, K_M^{-\ell}).$$
By using the $L^2$-estimate for $\bar\partial$-operator, we can find an exhaustion of $M_\infty\backslash S$ by open subsets
$V_1\subset V_2\subset \cdots \subset V_\ell \subset \cdots$
such that $\psi_{i,\ell}$ converge to an embedding
$$\psi_{\infty, \ell}: V_\ell \subset M_\infty \,\mapsto\, \CC P^{N_\ell}.$$

By the partial $C^0$-estimate, there is an integer $m > 0$ such that for any $\ell =  m k $, $\psi_{i,\ell}$ converge to an extension of
$\psi_{\infty ,\ell}$ on $M_\infty$ under the convergence of $(M,\omega_i)$ to $(M_\infty, \omega_\infty)$. We still denote this extension by
$$\psi_{\infty, \ell}: \,M_\infty\,\mapsto\, \CC P^{N_{\ell_a}}.$$
By the estimate in Corollary \ref{coro:3-1},
$\psi_{i,\ell}$ are uniformly Lipschtz, so $\psi_{\infty,\ell}$ is a Lipschtz map.

\vskip 0.1in
\noindent
{\bf Claim}: $M_\infty$ is a variety. 
\vskip 0.1in
\noindent
For this, we only need to show that for $k\ge n+1$, $\psi_{\infty,\ell} $ is a homeomorphism
from $M_\infty$ onto its image which is also the limit of complex submanifolds $\psi_{i,\ell}(M)\subset \CC P^{N_\ell}$.

By the same arguments as those in proving the partial $C^0$-estimate, for any $r > 0$, there are $k(r)$ and $s(k)$ such that if $k \ge k(r)$,
then for any $x, y\in M$ such that $d_i(x,y) \,\ge\, r$, where $d_i(\cdot,\cdot)$ denotes the distance of the metric $\omega_i$,
there is a holomorphic section
$\varsigma_i \in H^0(M, K_M^{-\ell})$, where $\ell = m k$,
satisfying:
\begin{equation}
\label{eq:separation}
\int_M \,||\varsigma_i||_i^2 \omega_i^n\,=\,1~~~{\rm and}~~~| ||\varsigma_i||_i(x) \,-\,||\varsigma_i||_i(y) |\,\ge\, s(k).
\end{equation}

The above claim follows from this and the effective finite generation of the anti-canonical ring of $M$ as shown in the thesis of Chi Li \cite{chili}.
\footnote{As I advocated in many occasions before (cf. \cite{tian09}), the partial $C^0$-estimate corresponds to an effective version of
the finite generation of the anti-canonical ring. Chi Li showed precisely in \cite{chili} how this works.}
For the orthonormal basis $\{\sigma_{i,a}\}_{0\le a\le N_m}$ of $H^0(M, K_M^{-m})$ with respect to $\omega_i$, by the partial $C^0$-estimate
and Corollary \ref{coro:3-1}, we have
\begin{equation}
\label{eq:pC0-1}
c(m)\,\le\,\sum_{a=0}^{N_m} \,||\sigma_{i,a}||_i^2 \,\le\, c(m)^{-1},
\end{equation}
where $c(m)$ is a uniform constant independent of $i$.

\begin{lemm}
\label{lemm:chili}
For any $l \,\ge\,1$ and $\varsigma\,\in\, H^0(M, K_M^{-(n + 1+l) m})$,
there are $h_0,\cdots,h_{N_m}$ in $H^0(M, K_M^{-(n+l)m})$ satisfying:
\begin{equation}
\label{eq:chili}
\varsigma \,=\, \sum_{a=0}^{N_m} \,h_a \,\sigma_{i,a}~~~{\rm and}~~~\int_M \,||h_a||_i^2 \,\omega_i^n\,\le\, C(m,l)\,\int_M ||\varsigma||_i^2\,\omega^n_i,
\end{equation}
where $C(m,l)$ is a constant depending only on $c(m)$, $l$ and $n$.

\end{lemm}
This can be proved by using the Skoda-Siu estimate, now a standard technique (cf. \cite{chili}, Proposition 7).

Note that for any $x\in M_\infty$ and $k\ge 1$, we have
\begin{equation}
\label{eq:obser-1}
\psi_{\infty, m k }^{-1}( \psi_{\infty, m k }(x))\,\subseteq \, \psi_{\infty, m }^{-1}( \psi_{\infty, m }(x)).
\end{equation}
Using this and Lemma \ref{lemm:chili} with $i\to \infty$, we get
$$\psi_{\infty, m (n+1 + l) }^{-1} ( \psi_{\infty, m (n+1+l)} (x))\,\supseteq\,\psi_{\infty, m (n+1) }^{-1}(\psi_{\infty, m (n+1) }(x)).$$
It follows from \eqref{eq:separation} that for any $x\not= y \in M_\infty$,
$$\psi_{\infty, m (n+1 + l) }(x)\,\not=\,\psi_{\infty, m (n+1 + l) }(y)$$
if $l$ is sufficiently large. Therefore, we can get
$$\psi_{\infty, m (n+1 ) }(x)\,\not=\,\psi_{\infty, m (n+1 ) }(y).$$
This implies that $\psi_{\infty, m (n+1)}$ is a homeomorphism, so $M_\infty$ is a variety.

There is another way of proving that $\psi_{\infty, m k}$ is a homeomorphism for $k$ sufficiently large. By \eqref{eq:obser-1},
the composition $\psi_{\infty, m }\cdot \psi_{\infty, mk}^{-1}$ is a well-defined map from the variety $Y_{mk}$ onto $Y_m$, where
$$Y_{mk}\,=\,\lim_{i\to\infty} \psi_{i,mk}(M) \subset \CC P^{N_{mk}},~~~Y_m \,=\, \lim_{i\to \infty} \psi_{i,m}(M) \subset \CC P^{N_{m}}.$$
Moreover, this map is also the limit of holomorphic maps $\psi_{i,m}\cdot\psi_{i,mk}^{-1}$, so
it is a holomorphic map. Since $\psi_{\infty, m}$ restricted to $V_m$ is an embedding for $m$ sufficiently large, we know
that $\psi_{\infty, m k }(\psi_{\infty, m}^{-1}(z))$ is either a point or a connected subvariety in the complex limit space
$Y_{mk}$. The second case can be ruled out by using the fact that there is a bounded function $u$ such that
$$\frac{1}{m k} \omega_{FS}|_{Y_{mk}}\,=\, \frac{1}{m} (\psi_{\infty, m }\cdot \psi_{\infty, m k}^{-1})^* (\omega_{FS}|_{Y_{m}})\,+\,\sqrt{-1}\partial\bar\partial u,$$
where $\omega_{FS}$ always denotes the Fubini-Study metric.

Next we prove that $M_\infty$ is normal. This means that $M_\infty\backslash S$
is locally connected. If $\beta_\infty= 1$, it is trivially true since the singular set of $M_\infty$ is of complex codimension at least $2$.
So we may assume $\beta_\infty < 1$. There are several approaches. One can use
a local version of the Cheeger-Gromoll splitting theorem (cf. \cite{anderson}).
One can also generalize the arguments I had in \cite{tian89} or use
the Cheeger-Colding theory.

Before we prove the normality of $M_\infty$, we make a remark: By Corollary \ref{coro:3-1} and the partial $C^0$-estimate,
$\log \,\rho_{\omega_i,m}$ converge to a uniformly continuous function $\log \,\rho'_{\infty,m}$ on $M_\infty$. This implies that
$\omega_\infty$ is the curvature of a continuous Hermitian metric on $K_{M_\infty}^{-1}$, so $||\cdot||_\infty$ is a continuous Hermitian metric on $M_\infty$
even when $\beta_\infty < 1$. Therefore, the defining section $\sigma_i$ of $D$ normalized by $\omega_i$ converge to a holomorphic section $\sigma_\infty$
of $K_{M_\infty}^{-\lambda}$. Clearly, the singular set ${\cal S}$ of $(M_\infty, \omega_\infty)$ is the divisor $D_\infty$ defined by $\sigma_\infty$ possibly
plus a closed subset
${\cal S}_{2n-4}$ of
complex codimension at least $2$.

Therefore, if $M_\infty$ is not normal, then $M_\infty\backslash D_\infty$ is not locally connected near a point, say $x$, in $D_\infty$.
Since $x\in {\cal S}\backslash \bar {\cal S}_{2n-4}$, there is a tangent cone ${\cal C}_x$ of $M_\infty$ at $x$ of the form $\CC^{n-1}\times {\cal C}_x'$, where ${\cal C}_x'$ is a 2-dimensional flat cone of angle $2\pi\bar\beta$, where $(1-\bar\beta) = k (1-\beta_\infty)$. 
However, ${\cal C}_x\backslash {\cal S}_x$ is connected, so $M_\infty\backslash D_\infty$ is connected near $x$, a contradiction.
Therefore, $M_\infty$ must be normal.

Note that the normality also follows from a result of Colding-Naber who proved the convexity of $M_\infty\backslash {\cal S}$.

\end{proof}

Of course, one can further analyze the finer asymptotic structure of $\omega_\infty$ along $D_\infty$. By the partial $C^0$-estimate and Corollary \ref{coro:3-1},
we have
$$\omega_\infty\,\ge \, c \,\psi_{\infty,\ell}^*(\omega_{FS}),$$
where $\ell = m k$ and $c$ is some positive constant. Using this, when $\beta_\infty \,<\, 1$, one can show that $\omega_\infty$ is a conic K\"ahler-Einstein metric
with conic angle $2\pi\bar\beta$ along $D_\infty$ in a weaker sense, where $(1-\bar\beta) = k (1-\beta_\infty)$\footnote{The integer $k$ may vary on different 
connected components of $D_\infty$.}. It is an interesting problem to examine the precise behavior of $\omega_\infty$ along $D_\infty$.

The following theorem may be useful in the future.
\begin{theo}
\label{th:4-2}
For each $\ell > 0$, let $\{\sigma_{i,\alpha}\}$ be an orthonormal basis of $H^0(M,K_M^{-\ell})$. Then by taking a subsequence if necessary,
$\{\sigma_{i,\alpha}\}$ converge to an orthonormal basis $\{\sigma_{\infty,\alpha}\}$ of $H^0(M_\infty, K_{M_\infty}^{-\ell})$. In particular, it implies
that $H^0(M_\infty, K_{M_\infty}^{-\ell})$ is of finite dimension and $\rho_{\omega_i,\ell}$ converge to $\rho_{\omega_\infty,\ell}$ as $i$ tends to $\infty$.

\end{theo}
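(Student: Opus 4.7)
My plan is to combine the a priori estimates on holomorphic sections of Corollary~\ref{coro:3-1} with an $L^2$-estimate construction in the spirit of the partial $C^0$-estimate proof.

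First, I would extract subsequential limits. The Bochner identities of Lemma~\ref{lemm:3-2}, together with the uniform Sobolev constant of Lemma~\ref{lemm:3-1} and Moser iteration, yield the uniform $C^0$ and first-derivative bound of Corollary~\ref{coro:3-1}. Hence for each $\alpha$ the sections $\sigma_{i,\alpha}$ are equicontinuous and uniformly bounded with respect to $H_i$. Using the diffeomorphisms $\tilde\phi_i$ and bundle isomorphisms $F_i$ of Section~4 on an exhaustion of $M_\infty\setminus {\cal S}$, an Arzel\`a--Ascoli and diagonal argument yields, after passing to a subsequence, a $C^\infty_{\mathrm{loc}}$ limit $\sigma_{\infty,\alpha}\in H^0(M_\infty,K_{M_\infty}^{-\ell})$ on $M_\infty\setminus {\cal S}$; this is essentially the content of Lemma~\ref{lemm:conv}.

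Next I would prove orthonormality of $\{\sigma_{\infty,\alpha}\}$. Theorem~\ref{th:3-2} gives $C^\infty$-convergence of $(\omega_i,H_i)$ to $(\omega_\infty,H_\infty)$ on compact subsets of $M_\infty\setminus(\bar {\cal S}\cup D_\infty)$. Since $\bar {\cal S}$ has real codimension at least $4$ and $D_\infty$ has real codimension $2$, Bishop--Gromov applied to the smooth approximants $\tilde\omega_i$ of Theorem~\ref{th:2-1} (which carry a uniform Ricci lower bound) gives
$$\mathrm{Vol}_{\omega_i}(T_\varepsilon({\cal S}))\,\le\,C\,\varepsilon^2 \qquad\text{uniformly in } i.$$
Combined with the uniform $C^0$-bound, dominated convergence away from ${\cal S}$ plus a tail estimate give
$$\int_M H_i(\sigma_{i,\alpha},\sigma_{i,\beta})\,\omega_i^n \,\longrightarrow\,\int_{M_\infty}H_\infty(\sigma_{\infty,\alpha},\sigma_{\infty,\beta})\,\omega_\infty^n\,=\,\delta_{\alpha\beta}.$$

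The main obstacle is showing that the $\sigma_{\infty,\alpha}$ span $H^0(M_\infty,K_{M_\infty}^{-\ell})$, equivalently that $\dim H^0(M_\infty,K_{M_\infty}^{-\ell})\le N_\ell+1$. Given any $\tau_\infty\in H^0(M_\infty,K_{M_\infty}^{-\ell})$, I would run a perturbation argument parallel to the partial $C^0$-proof: for each small $\bar\epsilon>0$, truncate $\tau_\infty$ by the cutoff $\gamma_{\bar\epsilon}$ of Lemma~\ref{lemm:cut-off-2} and transport via $F_i$ to a smooth section $\tilde\tau_i$ of $K_M^{-\ell}$ whose $\bar\partial$ has $L^2$-norm $O(\bar\epsilon)$ once $i$ is large enough. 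The $L^2$-estimate of Lemma~\ref{lemm:4-2}, whose constant $(\ell+\mu_i)^{-1}$ is uniform since $\mu_i\to\mu_\infty>0$, produces corrections $v_i$ with $\int_M\|v_i\|_i^2\omega_i^n\to 0$ (first $i\to\infty$, then $\bar\epsilon\to 0$), so that $\tau_i:=\tilde\tau_i-v_i$ lies in $H^0(M,K_M^{-\ell})$ and, after a diagonal extraction in $(\bar\epsilon,i)$, converges to $\tau_\infty$ in $L^2$ and in $C^\infty_{\mathrm{loc}}$ outside ${\cal S}$. Expanding $\tau_i=\sum_\alpha \langle\tau_i,\sigma_{i,\alpha}\rangle_i\,\sigma_{i,\alpha}$ and applying the orthonormality step to each pair $(\tau_i,\sigma_{i,\alpha})$ gives $\tau_\infty=\sum_\alpha \langle\tau_\infty,\sigma_{\infty,\alpha}\rangle_\infty\,\sigma_{\infty,\alpha}$, proving spanning.

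Finite-dimensionality of $H^0(M_\infty,K_{M_\infty}^{-\ell})$ is then immediate, and $\rho_{\omega_i,\ell}=\sum_\alpha H_i(\sigma_{i,\alpha},\sigma_{i,\alpha})$ converges to $\rho_{\omega_\infty,\ell}$ locally uniformly on $M_\infty\setminus {\cal S}$; the uniform continuity of $\rho_{\omega_i,\ell}$ on all of $M_\infty$, which follows from the derivative bound in Corollary~\ref{coro:3-1}, upgrades this to uniform convergence. The technical heart will be the third step: carrying out the $\bar\partial$-perturbation uniformly in $i$ while controlling the cutoff error across ${\cal S}$, since for general $\ell$ one cannot appeal to the partial $C^0$-estimate to tame the truncation directly.
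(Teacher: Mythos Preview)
Your approach is essentially the paper's: extract limits via Lemma~\ref{lemm:conv}, then show every $\tau_\infty\in H^0(M_\infty,K_{M_\infty}^{-\ell})$ arises as a limit of $\tau_i\in H^0(M,K_M^{-\ell})$ by truncating, transporting through $F_i$, and correcting with the $L^2$-estimate of Lemma~\ref{lemm:4-2}. Two refinements are worth noting. First, the cutoff you invoke is the wrong one: Lemma~\ref{lemm:cut-off-2} builds $\gamma_{\bar\epsilon}$ on a \emph{tangent cone} ${\cal C}_x$, whereas to truncate $\tau_\infty$ you need a cutoff on $M_\infty$ itself near ${\cal S}$. The paper states this as a separate lemma (Lemma~\ref{lemm:cut-off}) and observes that it is \emph{easy} once Theorem~\ref{th:conj-2} is in hand, since ${\cal S}$ is then known to be a divisor plus a codimension-$2$ subvariety. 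Second, this dissolves the difficulty you flag at the end: the partial $C^0$-estimate (for its own distinguished sequence of powers) has already been used to prove Theorem~\ref{th:conj-2}, and the resulting variety structure of $(M_\infty,{\cal S})$ is what makes the global cutoff construction straightforward for \emph{every} $\ell$, since the cutoff is just a function on $M_\infty$ and does not depend on $\ell$.
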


\begin{proof}
The arguments appeared before (cf. \cite{tian09}) and
are based on the $L^2$-estimate for the $\bar\partial$-operator. In view of Lemma \ref{lemm:conv},
it suffices to prove that
any $\tau$ in $H^0(M_\infty, K_{M_\infty}^{-\ell})$ with its $L^2$-norm being one
is a limit of a sequence $\tau_i \in H^0(M,K_M^{-\ell})$.
We will adopt the notations in establishing of the partial $C^0$-estimate, particularly, ${\bf C}_1$-${\bf C}_4$.

The following lemma is an analogue of Lemma \ref{lemm:cut-off-2}. It is easy to prove by
using Theorem \ref{th:conj-2}.
\begin{lemm}
\label{lemm:cut-off}
For any $\epsilon \,>\,0$, there is a smooth function $\gamma_\epsilon$ on $M_\infty$ satisfying:

\vskip 0.1in
\noindent
(1) $\gamma_\epsilon(x)\,=\,1$ for any $x$ with $d_\infty(x,{\cal S})\,\ge\,\epsilon$;

\vskip 0.1in
\noindent
(2) $0\,\le\,\gamma_\epsilon \,\le\,1$ and $\gamma_\epsilon(x)\,=\,0$ in an neighborhood of ${\cal S}$;

\vskip 0.1in
\noindent
(3) $|\nabla \gamma_\epsilon|\,\le\, C$ for some constant $C\,=\,C(\epsilon)$ and
$$\int_{M_\infty} \,|\nabla \gamma_\epsilon|^2\, \omega_\infty^n\,\le\,\epsilon.$$
\end{lemm}

For each $i$ and $\epsilon \in (0,1) $, define
$$ \xi_\epsilon (x) \,=\,F^*_i( \gamma_\epsilon \,\tau ) (x) ,$$
Then $\xi_\epsilon$ is a smooth section of $K_{M}^{-\ell}$ satisfying:

\vskip 0.1in
\noindent
(1) $\xi_\epsilon (x)= 0 $ in an neighborhood of $S$;

\vskip 0.1in
\noindent
(2) put $\zeta_\epsilon = \bar \partial \xi_\epsilon$, then
\begin{equation}
\label{eq:4-3}
\int_M ||\zeta_\epsilon ||^2 \,\omega_i^n \,\le\,  2\,\int_{M} || \bar \partial( F^*_i\tau)||^2 \,\omega_i^n \,+\, C\,\epsilon\,\sup_{M_\infty}||\tau||_\infty^2 ,
\end{equation}
where $C$ is a uniform constant.

Let $\delta_i$ be given in ${\bf C}_1$-${\bf C}_4$. Then there are $\epsilon_i$ with $\lim \epsilon_i=0$ such that
$\gamma_{\epsilon_i}$ in the above lemma vanishes in an neighborhood of the closure of $T_{\delta_i}({\cal S})$. Put $\xi_i\,=\,\xi_{\epsilon_i}$
and $\zeta_i\,=\,\zeta_{\epsilon_i}$, then it follows from
\eqref{eq:4-3} that
\begin{equation}
\label{eq:4-4}
\lim _{i\to \infty } \int_M ||\zeta_i||^2 \,\omega_i^n \,=\, 0.
\end{equation}
Applying Lemma \ref{lemm:4-2} to $\zeta_i$, we get $\vartheta_i$ such that $\bar \partial \vartheta_i = \zeta_i$ and
$$\int _M ||\vartheta_i||_i^2 \,\omega_i^n \,\le \, \frac{1}{\ell + \mu}\,\int_M ||\zeta_i||_i^2\,\omega_i^n\, \to \,0.$$
On the other hand, $\zeta_i = \bar\partial \xi_i$. By the construction of $\xi_i$, we can easily show that $\xi_i$ converge to $\tau$ in the $C^\infty$-topology
outside ${\cal S}$ and
$$\lim_{i\to\infty} \int _M ||\xi_i||_i^2 \,\omega_i^n \,=\, \int_{M_\infty} ||\tau||_\infty^2\,\omega_\infty^n \,>\,0.$$
Then $\tau_i\,=\,\xi_i-\vartheta_i$ defines a holomorphic section of $K_M^{-\ell}$ which converges to $\tau$ in the $L^2$-topology. By the standard elliptic estimates,
we can easily show that $\tau_i$ converge to $\tau$ in the $C^\infty$-topology outside ${\cal S}$. This proves Theorem \ref{th:4-2}.
\end{proof}

\section{Proving Theorem \ref{th:main-2}}

In this section, we complete the proof of Theorem \ref{th:main-2}, i.e., if a Fano manifolds $M$ is K-stable, then it admits a K\"ahler-Einstein metric.
In fact, as I pointed out in describing my program on the existence of K\"ahler-Einstein metrics, the reduction of Theorem \ref{th:main-2} from the partial $C^0$-estimate had been known to me for long. \footnote{Our program was originally proposed for the Aubin continuity method, but it works for the
new Donaldson-Li-Sun continuity method in an identical way.}

As explained in the introduction, in order to prove Theorem \ref{th:main-2}, we only need to establish the $C^0$-estimate for the solutions
of the complex Monge-Ampere equations for $\beta > 1-\lambda^{-1}\,$:
\begin{equation}
\label{eq:5-1}
(\omega_\beta \,+\, \sqrt{-1} \partial \bar\partial \varphi )^n \,=\, e^{h_\beta - \mu \varphi} \omega^n_\beta,
\end{equation}
where $\omega_\beta$ is a suitable family of conic K\"ahler metrics with $[\omega_\beta] = 2\pi c_1(M)$ and cone angle $2\pi \beta$ along $D$ and
$h_\beta$ is determined
by
$${\rm Ric}(\omega_\beta)\,=\,\mu \omega \,+\,2\pi (1-\beta) [D] \,+ \,\sqrt{-1} \partial\bar\partial h_\beta~~{\rm and}~~\int_M( e^{h_\beta} - 1) \omega^n_{\beta}\,=\,=0.$$
By the discussed in the introduction, we know that there is a non-empty and maximal interval $E\,=\, (1-\lambda^{-1}, \bar\beta)$ for some $\bar\beta \in  (1-\lambda^{-1}, 1 )$
or $(1-\lambda^{-1},1]$ such that \eqref{eq:5-1} has a solution $\varphi_\beta$ for any $\beta\in E$. Actually, such a solution $\varphi_\beta$ is unique, so $\{\varphi_\beta\}$ is a continuous family on $M$ and smooth outside $D$.\footnote{In fact, one can use prove this continuity and smoothness directly by using the Inverse Function Theorem as we argued for the openness of $E$.}
If $1\in E$, we already have Theorem \ref{th:main-2} and nothing more needs to be done. Hence, we may assume that $E=(1-\lambda^{-1}, \bar\beta)$ for some
$\bar\beta \,<\, 1$, we will derive a contradiction. By our assumption and the results in \cite{jeffresmazzeorubinstein}, $||\varphi_\beta||_{C^0}$ diverge to $\infty$ as $\beta$ tends to $\bar\beta$. We will show that it contradicts to the K-stability of $M$. Now let us recall the definition of the K-stability.
I will use the original one from \cite{tian97} which is directly related to our program of establishing the existence of K\"ahler-Einstein metrics through the continuity method.

First we recall the definition of the Futaki invariant \cite{futaki83}:
Let $M_0$ be any Fano manifold and $\omega$ be a K\"ahler metric with $c_1(M)$ as its K\"ahler class,
for any holomorphic vector field $X$ on $M_0$, Futaki defined
\begin{equation}
\lab{eq:5-2}
f_{M_0}(X)\,=\, \int_M X(h_\omega) \,\omega^n,
\end{equation}
where ${\rm Ric}(\omega) \,-\, \omega \,=\, \sqrt{-1} \partial\bar\partial h_\omega$.
Futaki proved in \cite{futaki83} that $f_M(X)$ is independent of the choice of $\omega$, so it is a holomorphic invariant.
In \cite{dingtian92}, the Futaki invariant was extended to normal Fano varieties. The extension is based on the following reformulation:
\begin{equation}
\lab{eq:5-3}
f_{M_0}(X) \,=\, - \,n \int_M \theta_X  \left ({\rm Ric}(\omega) - \omega \right ) \wedge \omega^{n-1},
\end{equation}
where $i_X\omega \,=\, \sqrt{-1}\, \bar\partial \theta_X$.

Now let $M$ be a Fano manifold $M$. By the Kodaira embedding theorem, for $\ell$
sufficiently large, any basis of $H^0(M, K_M^{-\ell})$ gives an embedding
$$\phi_\ell: M \mapsto \CC P^{N},$$
where $N\,=\,\dim_\CC H^0(M,K_M^{-\ell})-1$. Any other basis gives an embedding of the form
$\sigma\circ \phi_\ell$, where $\sigma\in G=\SL(N+1, \CC)$.

For any algebraic subgroup $G_0\,=\,\{\sigma(t)\}_{t\in \CC ^*}$ of
$\SL(N+1, \CC)$, there is a unique limiting cycle
$$M_0\,=\,\lim_{t\to 0} \sigma(t)(M)\subset \CC P^N.$$
Let $X$ be the holomorphic vector field whose real part generates the action by $\sigma(e^{-s})$.
By \cite{dingtian92}, if $M_0$ is normal, there is a generalized Futaki invariant $f_{M_0}(X)$ defined by \eqref{eq:5-3}.

Now we can introduce the K-stability from \cite{tian97}.

\begin{defi}
\lab{defi:5-1}  We say that $M$ is K-stable with respect to
$K_M^{-\ell}$ if $f_{M_0}(X) \ge 0$ for any $G_0\subset \SL(N+1)$ with a normal $M_0$
and the equality holds if and only if $M_0$ is biholomorphic to $M$. We say that $M$ is K-stable if it is K-stable for all sufficiently large $\ell$.
\end{defi}

There are other formulations of the K-stability by S. Donaldson in \cite{donaldson02} and S. Paul in \cite{paul08}.

It was proved in \cite{tian97}
\begin{theo}
\label{th:5-1}
Let $M$ be a Fano manifold without non-trivial holomorphic vector fields and which admits a K\"ahler-Einstein metric. Then
$M$ is K-stable.
\end{theo}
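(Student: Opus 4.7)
The plan is to show that for every one-parameter algebraic subgroup $\sigma(t) \subset \SL(N+1,\CC)$ whose central fiber $M_0 = \lim_{t\to 0}\sigma(t)(M)$ is normal, the generalized Futaki invariant $f_{M_0}(X) \ge 0$, with equality only when $M_0$ is biholomorphic to $M$. The whole argument is organized around identifying $f_{M_0}(X)$ as the asymptotic slope of a suitable energy functional along the geodesic-like ray generated by the degeneration, and then using the existence of a K\"ahler-Einstein metric (together with $\eta(M) = \{0\}$) to force that slope to be non-negative.

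First I would work with the Ding-type functional $\bF_{\omega_0,1}(\varphi) = \bJ_{\omega_0}(\varphi) - \frac{1}{V}\int_M \varphi \,\omega_0^n - \log\bigl(\frac{1}{V}\int_M e^{h_0 - \varphi}\,\omega_0^n\bigr)$ appearing in \eqref{eq:func-1} specialized to $\beta = 1$ and $\mu = 1$, whose Euler-Lagrange equation is exactly the K\"ahler-Einstein equation. The K\"ahler-Einstein metric $\omega_{KE}$ is a minimizer of $\bF_{\omega_0,1}$ (Theorem \ref{th:bo}), and because $\eta(M) = \{0\}$, an argument parallel to Proposition 1.1 of \cite{lisun} (or the original argument of \cite{tian97} using the log-$\alpha$-invariant to upgrade boundedness below into properness) gives the stronger estimate
\begin{equation}
\label{eq:proper}
\bF_{\omega_0,1}(\psi) \,\ge\, \epsilon\, \bJ_{\omega_0}(\psi) \,-\, C_\epsilon
\end{equation}
for all smooth $\psi$ in $\cK(M,\omega_0)$, with $\epsilon, C_\epsilon > 0$. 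This is the properness statement one needs; it is the direct analogue of the fact used in Corollary \ref{coro:bo} that properness of $\bF_{\omega_0,\bar\mu}$ follows once one has a solution at a slightly larger parameter.

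Next I would fix the embedding $\phi_\ell: M \hookrightarrow \CC P^N$, let $\omega_{FS}$ denote the Fubini-Study metric normalized so that $\ell^{-1}\phi_\ell^*\omega_{FS} \in 2\pi c_1(M)$, and build an explicit ray of K\"ahler potentials $\varphi_s$ on $M$ by setting $\omega_0 + \sqrt{-1}\,\partial\bar\partial\varphi_s = \ell^{-1}\sigma(e^{-s})^*\omega_{FS}|_M$ (after normalizing). The core computation is the slope identity
\begin{equation}
\label{eq:slope}
\lim_{s\to\infty} \frac{d}{ds}\,\bF_{\omega_0,1}(\varphi_s) \,=\, -\,c_n\, f_{M_0}(X),
\end{equation}
where $X$ is the holomorphic vector field on $M_0$ whose real part generates $\sigma(e^{-s})$ and $c_n > 0$ is an explicit positive constant. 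The derivation goes through the standard $\frac{d}{ds}\bF = -\int_M \dot{\varphi}_s\bigl(\mathrm{Ric}(\omega_s) - \omega_s\bigr)\wedge \omega_s^{n-1}$, and the limit is computed by sending $s\to\infty$ so that $\omega_s$ concentrates on $M_0$; normality of $M_0$ is precisely what makes the right-hand side converge to the Ding-Tian reformulation \eqref{eq:5-3} of the Futaki invariant.

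Combining \eqref{eq:proper} and \eqref{eq:slope} completes the proof: since $\bJ_{\omega_0}(\varphi_s) \to +\infty$ along any non-trivial algebraic degeneration (because the ray is bounded in some $L^\infty$-modulo-automorphism sense if and only if it is contained in $\Aut(M) = \{1\}$), properness forces $\bF_{\omega_0,1}(\varphi_s) \to +\infty$, so its asymptotic slope is $\ge 0$, giving $f_{M_0}(X) \ge 0$. If $f_{M_0}(X) = 0$, the slope vanishes, and the resulting almost-constancy of $\bF_{\omega_0,1}(\varphi_s)$ together with \eqref{eq:proper} shows $\bJ_{\omega_0}(\varphi_s)$ stays bounded; an approximation-and-uniqueness argument (the KE metric is the unique minimizer and $\eta(M) = \{0\}$) then identifies $M_0$ with $M$. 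The main obstacle, and the real content imported from \cite{tian97, dingtian92}, is the slope formula \eqref{eq:slope}: one has to make rigorous sense of differentiating $\bF$ along a ray whose limit is only a normal (possibly singular) Fano variety, and match the limiting integral with the intrinsic definition of $f_{M_0}(X)$ on $M_0$.
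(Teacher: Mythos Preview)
The paper does not actually prove Theorem~\ref{th:5-1}; it simply records the statement and attributes the proof to \cite{tian97} with the sentence ``It was proved in \cite{tian97}''. So there is no in-paper argument to compare against. Your outline is a faithful sketch of the strategy in \cite{tian97} and \cite{dingtian92}: properness of $\bF_{\omega_0,1}$ from the existence of a K\"ahler--Einstein metric together with $\eta(M)=\{0\}$, the Ding--Tian slope formula identifying the asymptotic derivative of $\bF_{\omega_0,1}$ along the ray $\sigma(e^{-s})$ with the generalized Futaki invariant of the central fiber, and the combination of the two to force the correct sign.

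Two places in your write-up deserve tightening. First, check the sign in \eqref{eq:slope}: with the paper's convention that K-stability means $f_{M_0}(X)\ge 0$ for the vector field $X$ generating $\sigma(e^{-s})$, and with $\bF_{\omega_0,1}(\varphi_s)\to +\infty$ along a nontrivial degeneration, you need the asymptotic slope to equal a \emph{positive} multiple of $f_{M_0}(X)$, not $-c_n f_{M_0}(X)$; otherwise your conclusion would be $f_{M_0}(X)\le 0$. Second, your treatment of the equality case is too loose: knowing only that the \emph{limit} of $\frac{d}{ds}\bF_{\omega_0,1}(\varphi_s)$ vanishes does not by itself give ``almost-constancy'' of $\bF_{\omega_0,1}(\varphi_s)$. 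The clean way to close this is to invoke that along an algebraic one-parameter subgroup $\bJ_{\omega_0}(\varphi_s)$ grows at least linearly in $s$ whenever the degeneration is nontrivial, so properness \eqref{eq:proper} gives $\bF_{\omega_0,1}(\varphi_s)\ge \epsilon'\, s - C'$ with $\epsilon'>0$, forcing the asymptotic slope to be strictly positive and hence $f_{M_0}(X)>0$ whenever $M_0\not\cong M$. That is the route taken in \cite{tian97}.
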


Now we return to our Fano manifold $M$ in Theorem \ref{th:main-2} and those solutions $\varphi_\beta$ ($\beta\in E$) as above.
In order to get a contradiction, we need to produce only a normal Fano variety $M_0$ as in Definition \ref{defi:5-1} and with non-positive Futaki invariant.


Let $\{\beta_i\}$ be a sequence with $\lim \beta_i \,=\, \bar\beta$. Write $\varphi_i\,=\,\varphi_{\beta_i}$. If $\sup_M \varphi_i$
is uniformly bounded, by the Harnack-type estimate in Theorem in \cite{jeffresmazzeorubinstein},
the $C^0$-norm of $\varphi_i$ is uniformly bounded. So, by \cite{jeffresmazzeorubinstein} again, $\varphi_i$ converge to a solution of \eqref{eq:5-1} for $\beta\,=\,\bar\beta$. A contradiction! Therefore, we have
$$ \lim_{i\to \infty} \sup_M \varphi_{i}\,=\, \infty.$$
We will fix such a sequence $\{\beta_i\}$ and write
$$\omega_i \,=\, \omega \,+\, \sqrt{-1}\,\partial\bar\partial \varphi_{i}.$$
Then $\omega_i$ is a conic K\"ahler-Einstein metric on $M$ with cone angle $2\pi \beta_i$ along $D$. By taking a subsequence of necessary,
we may assume that $(M,D,\omega_i)$ converge to $(M_\infty,D_\infty,\omega_\infty)$ in the Gromov-Hausdorff topology.
By Theorem \ref{th:conj-2}, $M_\infty$ is a normal subvariety in some projective space $\CC P^N$ and $\omega_\infty$ is a smooth K\"ahler-Einstein metric
outside a divisor $D_\infty$ and the singular set $\bar {\cal S}$ of $M_\infty$.\footnote{We have seen in last section that $\omega_\infty$ has
locally continuous potentials.} 
We will identify $M_\infty$ with its image in $\CC P^N$ by an embedding defined by a basis $\{S_{\infty,\alpha}\}$ of $H^0(M_\infty, K_{M_\infty}^{-\ell})$,
in fact, such a basis
$\{S_{\infty,\alpha}\}$ is orthonormal with respect to the inner product on $H^0(M_\infty, K_{M_\infty}^{-\ell})$ by $\omega_\infty$.

Similarly, we embed $M$ by orthonormal bases of $H^0(M,K_M^{-\ell})$ with respect to $\omega_i$. All these embeddings differ by transformations in $G$.
On the other hand, by taking a subsequence if necessary, we may assume that those orthonormal bases of $H^0(M,K_M^{-\ell})$ converge to the orthonormal basis 
$\{S_{\infty,\alpha}\}$ of $H^0(M_\infty,K_{M_\infty}^{-\ell})$ under
the convergence of $(M,D,\omega_i)$ to $(M_\infty, D_\infty, \omega_\infty)$. It implies that $(M_\infty, D_\infty)$ lies in the closure of the orbit of $(M, D)$ under the group action
of $G\,=\,\SL(N+1,\CC)$ on $\CC P^N$. Then one can deduce from some general facts in algebraic geometry that the stabilizer $G_\infty$ of $M_\infty$ in $G$ contains a holomorphic subgroup.\footnote{For the Aubin continuity, one can show by geometric analytic arguments that $M_\infty$ admits a $\CC^*$-action.
One should be able to extend this method to the continuity method proposed by Donaldson et al.} We need to prove that it contains a $\CC^*$-subgroup. Then, using the K\"ahler-Einstein metric $\omega_\infty$, one can show that the generalized Futaki invariant is not positive. This contradicts to the K-stability.

\begin{lemm}
\label{lemm:5-1}
The Lie algebra $\eta_\infty$ of $G_\infty$ is reductive.

\end{lemm}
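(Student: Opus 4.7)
The lemma is the singular-variety analogue of the classical Matsushima--Lichnerowicz theorem: for a smooth Fano K\"ahler-Einstein manifold the Lie algebra of holomorphic vector fields splits as $\eta = \mathfrak{k} \oplus J\mathfrak{k}$ into Killing plus gradient components, and is therefore reductive. The plan is to adapt this argument to $(M_\infty,\omega_\infty)$ by working on the regular locus $M_\infty^{\rm reg}:=M_\infty\setminus\sing(M_\infty)$ and justifying all integrations by parts with the cutoff functions constructed in Section 4.

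Every $X\in\eta_\infty$ generates a one-parameter subgroup of $\SL(N+1,\CC)$ preserving $M_\infty\subset\CC P^N$ and hence restricts to a holomorphic vector field on $M_\infty^{\rm reg}$. Using the continuous Hermitian metric $h_\infty$ on $K_{M_\infty}^{-1}$ with curvature $\omega_\infty$ produced by the partial $C^0$-estimate (Theorem~\ref{th:conj-2}), I would attach to $X$ a holomorphy potential $\theta_X$ with $\iota_X\omega_\infty=\bar\partial\theta_X$ on $M_\infty^{\rm reg}$, normalized by $\int_{M_\infty}\theta_X\,\omega_\infty^n=0$. Boundedness and continuity of $\theta_X$ on all of $M_\infty$ follow by comparing with the bounded Fubini-Study potential of $X$ on $\CC P^N$ and using that a suitable power of $h_{FS}|_{M_\infty}$ and $h_\infty$ are comparable, a direct consequence of the partial $C^0$-estimate.

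On $M_\infty^{\rm reg}$, away from $D_\infty$, the equation ${\rm Ric}(\omega_\infty)=\mu_\infty\omega_\infty$ combined with the Bochner computation gives
$$\Delta_\infty\theta_X+\mu_\infty\theta_X=0.$$
Writing $\theta_X=u+\sqrt{-1}\,v$ with $u,v$ real, both components satisfy the same eigenvalue equation. The $(1,0)$-fields $V_u,V_v$ determined by $\iota_{V_u}\omega_\infty=\bar\partial u$ and $\iota_{V_v}\omega_\infty=\bar\partial v$ are holomorphic on $M_\infty^{\rm reg}$, and each is the complexification of a Killing vector field for $\omega_\infty$; writing $X=V_u+\sqrt{-1}\,V_v$ yields the decomposition $\eta_\infty=\mathfrak{k}_\infty\oplus\sqrt{-1}\,\mathfrak{k}_\infty$, where $\mathfrak{k}_\infty$ is the real subalgebra of holomorphic Killing fields. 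This exhibits $\eta_\infty$ as the complexification of a compact real Lie algebra, so it is reductive.

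The main technical obstacle is to carry out these steps globally on the singular variety. First, the integration by parts in the Bochner argument must be justified across both $D_\infty$ and the codimension-two set $\bar{\cal S}$. For this I would use the cutoff functions $\gamma_\epsilon$ of Lemma~\ref{lemm:cut-off}, which vanish near $\sing(M_\infty)$ and satisfy $\int_{M_\infty}|\nabla\gamma_\epsilon|^2\,\omega_\infty^n\le\epsilon$: pairing the identity against $\gamma_\epsilon^2\,\bar\theta_X$ and sending $\epsilon\to 0$, the boundary contribution vanishes thanks to the $L^\infty$-bound on $\theta_X$. Second, the vector fields $V_u,V_v$ are a priori defined only on $M_\infty^{\rm reg}$; Corollary~\ref{coro:3-1}, applied to the approximating smooth manifolds $(M,\omega_i)$, provides uniform first-derivative bounds that pass to the limit and bound $V_u,V_v$ pointwise on $M_\infty^{\rm reg}$, after which normality of $M_\infty$ together with Hartogs' theorem extends them across $\sing(M_\infty)$ to algebraic vector fields in $\eta_\infty$. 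An alternative route is to apply classical Matsushima in an approximate form on the smooth approximations $\tilde\omega_i$ from Section 2 (which satisfy ${\rm Ric}(\tilde\omega_i)\ge\mu_i\tilde\omega_i$) and pass to the Gromov--Hausdorff limit using the convergence results of Sections 3 and 4.
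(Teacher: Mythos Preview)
Your proposal is correct and follows essentially the same route as the paper: both run the Matsushima--Lichnerowicz argument on $(M_\infty,\omega_\infty)$, producing a bounded holomorphy potential $\theta_X$ for each $X\in\eta_\infty$, deriving the eigenvalue equation $\Delta_\infty\theta_X+\mu_\infty\theta_X=0$ on the regular locus, and then using that the real and imaginary parts are real eigenfunctions whose gradients are holomorphic and generate Killing fields, yielding $\eta_\infty=\mathfrak{k}_\infty\oplus\sqrt{-1}\,\mathfrak{k}_\infty$. The paper obtains $\theta_\infty$ concretely as $\theta+\ell^{-1}X(\rho_{\omega_\infty,\ell})$ from the Fubini--Study potential and invokes the Lipschitz bound on $\rho_{\omega_\infty,\ell}$ (from the partial $C^0$-estimate and Corollary~\ref{coro:3-1}) to get Lipschitz continuity of $\theta_\infty$ across ${\cal S}$, whereas you phrase the same boundedness via the comparability of $h_\infty$ and $h_{FS}$ and are more explicit about using the cut-offs of Lemma~\ref{lemm:cut-off} to justify the Bochner integration by parts; these are the same ingredients packaged slightly differently.
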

\begin{proof}
The arguments are standard. Let $X\in \eta_\infty$, i.e., a holomorphic vector field on $\CC P^N$ which is tangent to $M_\infty$, then there is a smooth
function $\theta$ such that $i_X \omega _{FS}\,=\,\ell \sqrt{-1}\,\bar\partial \theta$. We have
$$\ell\,\omega_\infty\,=\,\omega_{FS}|_{M_\infty}\,+\,\sqrt{-1}\, \partial\bar\partial \rho_{\omega_\infty,\ell}.$$
It follows
$$i_X\omega_\infty\,=\,\sqrt{-1}\,\bar\partial \theta_\infty,~~~{\rm where}~\theta_\infty\,=\, \theta \,+\,\frac{1}{\ell}\,X(\rho_{\omega_\infty,\ell}).$$
It is a fact that $X$ generates a $\CC^*$-action if and only if it is a complexication of a Killing field. Therefore, if we normalize $X$ by multiplication by a complex number such that $\sup_{M_\infty} \theta_\infty \,=\, 1$, we want to show that the imaginary part of $X$ is Killing. The standard
computations show that if $\theta_\infty$ is normalized by
$$ \int_{M_\infty} \theta_\infty \,\omega_\infty^n\,=\,0,$$
then
$$ \Delta_\infty \theta_\infty \,+\,\mu_\infty\,\theta_\infty\,=\,0~~~{\rm on}~M_\infty\backslash D_\infty\cup \bar{\cal S},$$
where $\Delta_\infty$ denotes the Laplacian of $\omega_\infty$ and $\mu_\infty = 1 - (1-\bar \beta)\lambda$.
On the other hand, by using our estimates on $\rho_{\omega_\infty,\ell}$ and the Bochner identity, we can show that $\theta_\infty$ is Lipschtz continuous, thus it extends to an eigenfunction of $\Delta_\infty$, so do its real and imaginary parts. It follows from the standard arguments that the imaginary part of
$\theta_\infty$ induces a Killing field. Then the lemma is proved.
\end{proof}

As observed in \cite{donaldson11} and \cite{li11}), by using the same arguments as in \cite{futaki83}, one can define the Futaki invariant $f_{M_\infty,(1-\beta)D_\infty}(X)$, also referred as the log-Futaki invariant, for conic
K\"ahler metrics on $M_\infty$ with cone angle $2\pi \beta$ along $D_\infty$ ($\beta\in (0,1)$). Furthermore,
if there is a conic K\"ahler-Einstein metric with angle $2\pi\beta$ along $D_\infty$, the log-Futaki $f_{M_\infty, (1-\beta) D_\infty}$ vanishes. In our case, though
$\omega_\infty$ may not be smooth along $D_\infty$ even in the conic sense, using the Lipschtz continuity of $\theta_\infty$, one can still prove
the vanishing of $f_{M_\infty, (1-\beta) D_\infty}(X)$ by the same arguments as in the smooth case.
Then the Futaki invariant $f_{M_\infty}(X)\,\le \,0$.
This can be derived by using the formula (cf. \cite{li11}, \cite{sun11}):\footnote{Chi Li pointing out that this formula first appeared in
\cite{donaldson11}. I thank him for this as well as some other inputs on log-Futaki invariants.}
$$0\,=\,f_{M_\infty, (1-\beta) D_\infty}(X)\,=\,f_{M_\infty}(X) \,+\,(1-\beta)\,\int_{D_\infty} \theta_\infty\, d{\cal H}^{2n-2},$$
where $d{\cal H}^{2n-2}$ denotes the (2n-2)-dimensional Hausdorff measure on $D_\infty$ induced by $\omega_\infty$. To see this, we
first observe that $f_{M_\infty, (1-\beta_1) D_\infty}(X) > 0 $ for some $\beta_1 \in (1-\lambda^{-1},\beta)$, e.g., if it is sufficiently close to
$1- \lambda^{-1}$ because there is a corresponding conic
K\"ahler-Einstein metric with angle $2\pi \beta_1$, on the other hand, because of the linearity, we have
$$(\beta-\beta_1)\,f_{M_\infty}(X)\,=\, (1-\beta_1) \,f_{M_\infty, (1-\beta) D_\infty}(X)\,-\,(1-\beta)\,f_{M_\infty, (1-\beta_1) D_\infty}(X),$$
hence, $f_{M_\infty}(X)\le 0$.

On the other hand, by our assumption that $M$ is K-stable, since $M_\infty$ is not biholomorphic to $M$,
$$f_{M_\infty}(X) > 0.$$
This is a contradiction! Therefore, $\varphi_\beta$ are uniformly bounded and consequently, $\bar\beta\in E$, so $E$ is closed and Theorem \ref{th:main-2} is proved.

There is another way of finishing the proof of Theorem \ref{th:main-2} by using the CM-stability introduced in \cite{tian97}.
The CM-stability can be regarded as a geometric invariant theoretic version of the K-stability. It follows from
\cite{paultian} and \cite{paul08} that the CM-stability is equivalent to the K-stability.
In the following, we outline this alternative proof of Theorem \ref{th:main-2}.

Let us recall the CM-stability. We fix an embedding $M\subset \CC P^N$ by $K_M^{-\ell}$ as above.
Let $\pi: {\cal X}\mapsto Z$ be the universal family of $n$-dimensional normal varieties \footnote{Normality is not needed, but we assume this for simplicity.
Also by \cite{lixu11}, this assumption does not put any constraints on our results.} in $\CC P^N$ with the same Hilbert polynomial as that of $M$.
Clearly, $G\,=\,\SL(N+1)$ acts both ${\cal X}$ and $Z$ such that $\pi$ is equivariant.

Consider the virtual bundle
$${\cal E} \,= \, (n+1) ({\cal K} - {\cal K}^{-1}) ({\cal L} - {\cal L}^{-1})^n - n ({\cal L} - {\cal L}^{-1})^{n+1}, $$
where ${\cal K} \,=\, K_{\cal X} \otimes K^{-1}_{Z}$ is
the relative canonical bundle and ${\cal L}$ is the pull-buck of the hyperplane line bundle on $\CC P^N$.

Let $L$ be the determinant
line bundle $\det ( {\cal E}, \pi )$.
Clearly, $G$ acts naturally on the total space of $L$.

\begin{defi}
\label{defi:5-2}
Let $z=\pi(M)$ and $\tilde z$ be a non-zero lifting of $z$ in the total space of $L$.
We call $M$ CM-stable with respect to $K_M^{-\ell}$ if the orbit $G\cdot\tilde z$ in the total space of $L$
is closed and the stabilizer $G_z$ of $z$ is finite. We call $M$ CM-semistable if $0$ is not in the closure of $G\cdot \tilde z$.
We call $M$ CM-stable if it does with respect to all sufficiently large $\ell$.
\end{defi}

Now we fix $M$, $M_\infty$, $\ell$ as above. Given any $\sigma\in G$, there is an induced K\"ahler potential $\varphi_\sigma$
$$\frac{1}{\ell}\,\sigma^*\omega_{FS}\,=\,\omega_0\,+\, \sqrt{-1}\,\partial\bar\partial \varphi_\sigma.$$
Define a functional on the orbit $G\cdot z$:
$$F_\ell (\sigma) \,=\,{\bf F}_{\omega_0} (\varphi_\sigma).$$
Then we have the following (\cite{tian97}, Theorem 8.10)
\begin{theo}
\label{th:5-3}
The functional $F_\ell$ is proper on $G\cdot z \subset Z$ if and only if $M$ is CM-stable with respect to
$K_M^{-\ell}$.
\end{theo}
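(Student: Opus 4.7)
The plan is to relate $F_\ell(\sigma)$ to the logarithm of a Hermitian norm on the determinant line bundle $L$ evaluated at $\sigma\cdot\tilde z$, and then to invoke the GIT/Kempf--Ness dictionary relating properness of such norm functions to the closedness of the orbit.

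First, I would equip $L = \det(\mathcal{E},\pi)$ with a natural Hermitian metric $h_L$ constructed via Deligne pairings (equivalently, the Quillen metric) using the Fubini--Study metric $\omega_{FS}$ on $\mathbb{C}P^N$ and its restrictions to fibers of $\pi$. The key identity to establish is
\begin{equation*}
-\log \|\sigma\cdot\tilde z\|_{h_L}^2 \,=\, c_{n,\ell}\, F_\ell(\sigma) \,+\, O(1), \qquad \sigma\in G,
\end{equation*}
where $c_{n,\ell}>0$ depends only on $n$ and $\ell$ and the $O(1)$ term is bounded independently of $\sigma$ on the orbit $G\cdot z$. This identity is proved by expressing $\log \|\sigma\cdot\tilde z\|_{h_L}^2$ as an integral of Chern--Weil forms over the fiber $\sigma(M)\subset \mathbb{C}P^N$, expanding the virtual class $\mathcal{E}$, and matching the resulting currents term-by-term with the defining pieces $\mathbf{J}_{\omega_0}$ and $\frac{1}{V}\int \varphi_\sigma\,\omega_0^n$ of $\mathbf{F}_{\omega_0}$ (the Mabuchi/Ding-type energy used in Section~2). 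The computation is essentially the Knudsen--Mumford expansion for the Deligne pairing applied to $\sigma^*\omega_{FS}/\ell = \omega_0 + \sqrt{-1}\,\partial\bar\partial \varphi_\sigma$, and was carried out in detail in \cite{tian97}.

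Next I would invoke the standard Kempf--Ness principle. Since $G$ acts linearly on the total space of $L$ and $h_L$ is a $K$-invariant Hermitian norm (for $K\subset G$ a maximal compact), the function $\sigma\mapsto -\log\|\sigma\cdot\tilde z\|_{h_L}^2$ descends to $G\cdot z$ and is proper on the quotient $G\cdot z$ (which is essentially $G/G_z$) if and only if the orbit $G\cdot \tilde z\subset L$ is closed and the stabilizer $G_z$ of $z$ is finite---this is the precise geometric content of CM-stability as in Definition~5.2. Combining this with the identity above, properness of $F_\ell$ on $G\cdot z$ is equivalent to CM-stability of $M$ with respect to $K_M^{-\ell}$, which is the desired conclusion.

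The main obstacle is establishing the asymptotic identity in the first step with the correct normalization and showing the error is genuinely bounded on all of $G\cdot z$ (not merely along one-parameter subgroups). The specialization to one-parameter subgroups $\sigma(t)$ with $t\to 0$ recovers the generalized Futaki invariant $f_{M_0}(X)$ of the limit $M_0$ as the leading asymptotics of both sides, which is the compatibility check linking this analytic properness criterion to the algebraic Hilbert--Mumford numerical criterion underlying Definition~5.2. Once the Deligne-pairing identity is in place, the remainder of the argument is a formal application of GIT.
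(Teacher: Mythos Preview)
The paper does not actually prove Theorem~\ref{th:5-3}; it is simply quoted from \cite{tian97}, Theorem~8.10, with no argument given in the text. So there is no ``paper's own proof'' to compare your proposal against.

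That said, your outline is the standard and correct route to this statement and is essentially the content of the cited reference and its refinements in \cite{paultian} and \cite{paul08}: one identifies $F_\ell(\sigma)$ (up to a positive multiplicative constant and a bounded additive error) with $-\log\|\sigma\cdot\tilde z\|_{h_L}^2$ for a natural $K$-invariant Hermitian metric $h_L$ on the CM line bundle $L$, and then invokes the Kempf--Ness principle to translate properness of the norm function into closedness of the orbit $G\cdot\tilde z$ together with finiteness of the stabilizer, which is exactly the definition of CM-stability. Your caveat about the bounded error holding uniformly over $G\cdot z$ (not just along one-parameter subgroups) is the genuine technical point, and it is precisely what the Deligne-pairing/Quillen-metric computation in \cite{tian97} and the later work \cite{paul08} establish. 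Nothing in your sketch is wrong; it simply reproduces the argument the paper is content to cite.
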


By our discussions in Section 3, we can show that ${\bf F}_{\omega_0,\mu}$ restricted to $G\cdot z$ is proper for any $\mu \in (0,1]$. Combining this properness
with the partial $C^0$-estimate, we can bound the $C^0$-norm of $\varphi_\beta$ in a uniform way. Then it follows from \cite{jeffresmazzeorubinstein} that $E$ is closed.
Therefore, we have proved
\begin{theo}
\label{th: CM->KE}
Let $M$ be a Fano manifold without non-trivial holomorphc fields, then $M$ admits a K\"ahler-Einstein metric if and only if $M$ is CM-stable.
\end{theo}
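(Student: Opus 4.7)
\medskip

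\textbf{Proof proposal.} The necessity direction, that an admitted K\"ahler-Einstein metric implies CM-stability, is already contained in Theorem 8.10 of \cite{tian97} (combined with \cite{paultian},\cite{paul08} identifying CM-stability with K-stability). So the plan is to establish the sufficiency direction: assuming $M$ is CM-stable and has no non-trivial holomorphic fields, produce a K\"ahler-Einstein metric. I will use the Donaldson-Li-Sun continuity method with conic K\"ahler-Einstein metrics $\omega_\beta$, $\beta \in E \subset (1-\lambda^{-1},1]$, where $E$ is the maximal interval on which \eqref{eq:5-1} is solvable. By the discussion in the introduction $E$ is non-empty and open, so it remains only to prove $E$ is closed, and this reduces to a uniform $C^0$-bound on $\varphi_\beta$.

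\medskip

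First I would fix a sufficiently large $\ell$ and the associated embedding $M \subset \CC P^N$ by $K_M^{-\ell}$. By hypothesis and Theorem \ref{th:5-3}, the functional $F_\ell$ is proper on the orbit $G\cdot z \subset Z$. The next step is to upgrade this to properness of $\bF_{\omega_0,\mu}$ on the orbit (viewed as a subset of $\cK_\beta(M,\omega_0)$ via $\sigma \mapsto \varphi_\sigma$) for every $\mu \in (0,1]$. This follows from a direct comparison of the definitions \eqref{eq:func-1} and the functional behind $F_\ell$, together with the elementary fact that the logarithmic term in \eqref{eq:func-1} is uniformly comparable over the orbit (the defining section $S$ of $D$ and the reference weight $h_0$ are fixed data on $\CC P^N$).

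\medskip

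Next I would combine the partial $C^0$-estimate (Theorem \ref{th:main-1}) with this properness to bound $\bJ_{\omega_0}(\varphi_\beta)$. Given a sequence $\beta_i \to \bar\beta \in \overline E$, pick orthonormal bases $\{S_{i,\alpha}\}$ of $H^0(M, K_M^{-\ell})$ with respect to $\omega_i = \omega_\beta + \sqrt{-1}\partial\bar\partial \varphi_i$; these bases determine embeddings differing from the fixed one by elements $\sigma_i \in G$. The lower bound $\rho_{\omega_i,\ell} \ge c_\ell > 0$ from Theorem \ref{th:main-1}, together with the upper bound from Corollary \ref{coro:3-1}, shows that $\omega_i$ and $\ell^{-1}\sigma_i^*\omega_{FS}$ have uniformly comparable volume forms, hence $\varphi_i - \varphi_{\sigma_i}$ is uniformly bounded in $C^0$. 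In particular $\bJ_{\omega_0}(\varphi_i)$ differs from $\bJ_{\omega_0}(\varphi_{\sigma_i})$ by a uniformly bounded quantity. The analogue of Lemma \ref{lemm:2-2} along the continuity path \eqref{eq:5-1} bounds $\bF_{\omega_0,\mu_i}(\varphi_i)$ from above, hence also $\bF_{\omega_0,\mu_i}(\varphi_{\sigma_i})$ up to a uniform constant; properness on $G\cdot z$ then forces $\bJ_{\omega_0}(\varphi_{\sigma_i})$, and therefore $\bJ_{\omega_0}(\varphi_i)$, to stay bounded.

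\medskip

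Once $\bJ_{\omega_0}(\varphi_i)$ is uniformly bounded, the standard Moser iteration argument used in \cite{tian97}, \cite{tian98}, \cite{jeffresmazzeorubinstein} gives $\|\varphi_i\|_{C^0} \le C$ uniformly in $i$; applying \cite{jeffresmazzeorubinstein} then yields subsequential convergence to a solution at $\beta = \bar\beta$, proving $\bar\beta \in E$ and hence $1 \in E$. I expect the main obstacle to be the comparison step: one must argue carefully that the partial $C^0$-estimate, which controls the fibrewise Hermitian norms $\rho_{\omega_i,\ell}$ from below, translates into a $C^0$-comparison between the K\"ahler potentials $\varphi_i$ and $\varphi_{\sigma_i}$ up to the correct normalization of $\sigma_i$ (and correspondingly a comparison of the logarithmic terms in \eqref{eq:func-1}), uniformly in $i$. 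This is delicate because $\omega_i$ is only conic, not smooth, along $D$, so one has to work with the weighted Hermitian metric $H_{\omega_i}$ of Section 4 rather than the smooth determinant metric, and track the $\|S\|_0^{2(1-\beta)/\mu}$ factor carefully.
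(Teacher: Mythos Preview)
Your proposal is correct and follows essentially the same route as the paper's own argument: invoke Theorem \ref{th:5-3} to convert CM-stability into properness of $F_\ell$ on $G\cdot z$, pass to properness of $\bF_{\omega_0,\mu}$ on the orbit, and then use the partial $C^0$-estimate (Theorem \ref{th:main-1}) to transfer this orbit properness into a uniform bound on $\bJ_{\omega_0}(\varphi_\beta)$ and hence on $\|\varphi_\beta\|_{C^0}$. The paper compresses all of this into two sentences; your version spells out the comparison $\varphi_i - \varphi_{\sigma_i} = \ell^{-1}\log\rho_{\omega_i,\ell} + \text{const}$ and the role of the two-sided bound on $\rho_{\omega_i,\ell}$, which is exactly how the partial $C^0$-estimate enters. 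Two small remarks: for the upper bound on $\bF_{\omega_0,\mu_i}(\varphi_i)$ you can appeal directly to Theorem \ref{th:bo} (minimizer property) rather than an analogue of Lemma \ref{lemm:2-2}; and the passage from properness of $F_\ell = \bF_{\omega_0}$ to properness of $\bF_{\omega_0,\mu}$ for $\mu < 1$ is not a trivial comparison of log terms but an interpolation in the spirit of Corollary \ref{coro:bo} (Proposition 1.1 of \cite{lisun}).
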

In view of \cite{paultian} and \cite{paul08}, particularly Theorem D in \cite{paul08},
this implies Theorem \ref{th:main-2}.

\section{Appendix: The proof of Lemma \ref{lemm:cut-off-2}}

In this appendix, we complete the proof of Lemma \ref{lemm:cut-off-2}. We will adopt the notations in Section 5, 
particularly, in the proof of those special cases of
Lemma \ref{lemm:cut-off-2}. The arguments of our proof are based on known techniques, though tedious.
Note that if $\beta_\infty = 1$, then there is nothing to be proved since the singular set ${\cal S}_x$ is of complex dimension
at least $2$. So we may assume that $\beta_\infty < 1$. In this case, ${\cal S}_x$ has a decomposition into ${\cal S}_x^0$ and $\bar{\cal S}_x$ as before,
and for any $y \in {\cal S}^0_x$, there is a tangent cone of ${\cal C}_x$ at $y$
of the form $\CC^{n-1}\times {\cal C}_y'$ for which Lemma \ref{lemm:cut-off-2} has been proved.

Fix any $y\in {\cal S}_x^0\subset {\cal C}_x$, we have a tangent cone of the form $\CC^{n-1}\times {\cal C}_y'$ at $y$, where ${\cal C}_y'$ denotes the standard 2-dimensional
cone with angle $2\pi \bar\beta$, where $\bar\beta=\mu_a$ is given as in Lemma \ref{lemm:4-3} and satisfies
$(1-\bar\beta) = k (1-\beta_\infty)$ for some integer $k$.

There are $x_i \in M$ and $r_i> 0$ such that $(M, r_i^{-2} \omega_i, x_i)$ converge to the cone $({\cal C}_x, g_x, o)$
in the Gromov-Hausdorff topology and smooth topology outside the singular set ${\cal S}_x$, in particular, there are diffeomorphisms
$$\tilde\phi_i : V(x;\delta_i) \,\mapsto\, M\backslash T_{\delta_i}(D) ,$$
where $T_{\delta_i} (D)$ is the set of all points
within distance $\delta_i$ from $D$ with respect to the metric $\omega_i$ and $\lim \delta_i\, =\, 0$,
satisfying:

$$||r_i^{-2}\,\tilde\phi_i^*\omega_i - \omega_x||_{C^2(V(x;\delta_i))} \,\le\, \delta_i.$$
Furthermore, we may assume
$$B_{\frac{r_i}{2\delta_i}} (x_i,\omega_i) \backslash T_{\delta_i}(D)\,\subset \phi_i(V(x;\delta_i)).$$
Without loss of generality, we may assume that $\ell_i = r_i^{-2}$ are integers and Lemma \ref{lemm:4-4} holds for such $\ell_i$'s.

Note that there is a tangent cone of the form $\CC^{n-1}\times {\cal C}_y'$ with the standard cone metric $g_{\bar\beta}$
in the proof of Lemma \ref{lemm:cut-off-2}. The singular set of this tangent cone is $\CC^{n-1}\times\{0\}$.
Therefore, there are integers $k_j = s_j^{-2}$ such that $({\cal C}_x, k_j g_x, y)$ converge to $(\CC^{n-1}\times {\cal C}_y', g_{\bar\beta}, o)$ in the Gromov-Hausdorff topology and smooth topology outside the singular set. This implies that there are diffeomorphisms
$$\vartheta_j : U_{j} \,\mapsto\, {\cal C}_x\backslash {\cal S}_x $$
satisfying:
$$||s_j^{-2} \vartheta_j^*\omega_x - \omega_{\bar\beta}||_{C^2(U_j)} \,\le\, \frac{1}{j},$$
where
$$U_j \,=\,\{ (z',z_n) \in \CC^{n-1}\times {\cal C}_y'\,|\,\,|z'|\,<\, 9, ~~~\frac{1}{j}\,<\,|z_n|^{\bar\beta}\,<\, 9\,\}.$$
We may also have
$$B_{9 s_j} (y,g_x) \backslash T_{\frac{2}{j}}({\cal S}_x)\,\subset \vartheta_j (U_j).$$

Combining these, we see that for any $\epsilon > 0$, there are $j_\epsilon$ and $i_\epsilon$ such that for any $j \ge j_\epsilon$ and $i\ge i_\epsilon$,
the compositions
$$\tilde\phi_i\cdot \vartheta_j: U_j \mapsto M\backslash D$$
satisfying:
$$B_{(9-\epsilon) s_j r_i}(x_i, \omega_i)\backslash T_{\delta_i} (D)\,\subset\,
\tilde\phi_i(\vartheta_j(U_j))\,\subset \, B_{13 s_j r_i}(x_i, \omega_i)$$
and
$$||k_j\ell_i \,\vartheta_j^*\tilde\phi_i^*\omega_i - \omega_{\bar\beta}||_{C^2(U_j)} \,\le\, \epsilon$$
Furthermore, by using the above arguments in establishing the partial $C^0$-estimate, given any finitely many holomorphic functions $f_b$ ($b= 0, 1,\cdots,m$) with
$$\int_{\CC^{n-1}\times {\cal C}_y'} \,|f_b |^2\,e^{-\frac{|z'|^2 + |z_n|^{2\bar\beta} }{2}}\,\omega_{\bar\beta}^n\,=\,1,$$
where $\omega_{\bar\beta} $ is the K\"ahler form of $g_{\bar\beta}$, we can construct holomorphic sections $S_{i,j}^b$ of $K^{- k_j\ell_i}_M$ over $M$ such that
$$\sup _{U_j}\,|(\psi_{i,j})^* (S_{i,j}^b) \,-\, f_b | \,\le\, \frac{\epsilon}{2},$$
where $\psi_{i,j}$ is the isomorphism constructed by Lemma \ref{lemm:4-3} over $U_j$. By Corollary \ref{coro:3-1}, for some uniform constant $C$, we have
$$||\nabla S_{i,j}^b||_{i}\,\le\, C.$$
Now we take $f_0$ to be a positive constant function, then $S^0_{i,j}$ is almost a positive constant on $ \tilde\phi_i(\vartheta_j(U_j))$ which contains
$B_{8 s_j r_i}(x_i, \omega_i)$.
Then by rechoosing $j_\epsilon$ and $i_\epsilon$ if necessary, we can deduce from the properties of $S^b_{i,j}$:

\vskip 0.1in
\noindent
{\bf 1}. $B_{8 s_j r_i}(x_i, \omega_i)$ is contained in some $\CC^{N'}$, where $N'$ may depend on $i,j$;

\vskip 0.1in
\noindent
{\bf 2}. There is a holomorphic map $ F_{i,j}^m: \tilde\phi_i(\vartheta_j(U_j))\,\mapsto\, \CC^m$, where
$$ F_{i,j}^m\,=\, \left(\frac{S_{i,j}^1(x)}{S_{i,j}^0(x)},\cdots, \frac{S_{i,j}^m(x)}{S_{i,j}^0(x)}\right)$$
satisfying:
$$\left |F_{i,j}^m(\tilde\phi_i(\vartheta_j(z))) \,-\, \left (\frac{f_1}{f_0},\cdots, \frac{f_m}{f_0}\right )(z)\right |\,\le\, \epsilon ,~~~\forall z\in U_j.$$
We choose $m\ge n$ and $f_1=z_1,\cdots, f_n = z_n$. It follows from the above that $F_{i,j}^m$ is a biholomorphic map
from each $\tilde\phi_i(\vartheta_j(U_j))$ onto its image which contains a ball of radius close to $8$ in the cone $\CC^{n-1}\times {\cal C}_y'$.
We will abbreviate $F_{i,j}^n$ by $F_{i,j}$.

For $\epsilon$ sufficiently small and $i$ sufficiently large, when restricted to $B_{8 s_j r_i}(x_i, \omega_i)$, the map $F_{i,j}^m$ is one-to-one on outside a small tubular neighborhood of ${\cal S}_x$. Then by using the above {\bf 1} and {\bf 2}, one can see that each $F_{i,j}$ is a biholomorphic map from $B_{8 s_j r_i}(x_i, \omega_i)$ onto its image
which contains the following set
$$U_j'\,=\,\{\,(z',z_n)\,\in \, \CC^{n-1}\times {\cal C}_y'\,|\,\,\sqrt{|z'|^2 + |z_n|^{2\bar\beta}} \,< 8-\epsilon\,\}.$$

It follows from the above derivative estimate on $S_{i,j}^b$ that
$$ \sup_{B_{8 s_j r_i}(x_i, \omega_i)} | d F_{i,j}^m |_{  \omega_i}\,\le\, C_m\,(s_j\, r_i)^{-2},$$
where $C_m$ is a constant independent of $i$ and $j$. This is equivalent to
\begin{equation}\label{eq:lb-1}
 \omega_0 \,\le\, C_m \,(s_j\, r_i)^{-2}\,\omega_i,
 \end{equation}
where $\omega_0$ denotes the Euclidean metric on $\CC^m$. A consequence of this is that by taking a subsequence if necessary,
as $i$ goes to $\infty$, we get a limiting map
$$F_{\infty,j}^m: B_{8 s_j}(y, g_x)\,\mapsto\, \CC^m.$$
Moreover, its image is the subvariety $V_j^m\subset \CC^m$ which coincides with the limit of $F_{i,j}^m(B_{8 s_j r_i}(x_i, \omega_i))$.
Such a limit exists because of the well-known Bishop theorem in complex analysis and the following volume bound:
$$\int_{F_{i,j}^m(B_{8 s_j r_i}(x_i, \omega_i))} \,\omega_0^n \,\le\, C_m\,\frac{{\rm vol}(B_{8 s_j r_i}(x_i, \omega_i))} {(s_j\, r_i)^{2n}} \,\le\, C_m'.$$
The last one follows from the volume comparison.

Next we show that for $j$ sufficiently large, $F_{i,j}(D\cap B_{7 s_j r_i}(x_i, \omega_i)) $ converge to a local divisor $D_{j}^n\subset \CC^n $.
Again it is a corollary of the Bishop theorem, for this purpose, it suffices to bound the volume of $F_{i,j}(D\cap B_{7 s_j r_i}(x_i, \omega_i)) $.
Since $({\cal C}_x, s_j^{-2} g_x, y)$ converge to the standard cone $\CC^{n-1}\times {\cal C}_y'$ with the standard metric $g_{\bar\beta}$,
for $j,i$ sufficiently large, the image of $D\cap B_{8 s_j r_i}(x_i, \omega_i)$ under the map $F_{i,j}$ lies
in a tubular neighborhood:
$$T_{8,\epsilon}\,=\,\{ (z', z_n)\,|\, |z'| < 8, |z_n| < \epsilon\}.$$
On the other hand, using the slicing argument as that in \cite{cheegercoldingtian}, one can show that for each fixed $z'$ with $|z'| < 7.5$, the line segment $\{(z',z_n)\,|\,|z_n| \le 6\}$ intersects with $F_{i,j}(D\cap B_{8 s_j r_i}(x_i, \omega_i))$ at $k$ points (counted with multiplicity), where $(1-\bar\beta)= k(1-\beta_\infty)$. 

It is now easy to bound the volume of $F_{i,j}(D\cap B_{7 s_j r_i}(x_i, \omega_i))$: Let $\tilde\eta:\RR \mapsto \RR$ be a cut-off function
such that $\tilde\eta(t) = 1$ for $t \le 7.3$, $\tilde\eta(t) = 0 $ for $t > 7.8$ and $|\tilde\eta'| \le 2$, then the volume
of $F_{i,j}(D\cap B_{7 s_j r_i}(x_i, \omega_i))$ is bounded from above by
\begin{eqnarray}
&&\int_{F_{i,j}(D\cap B_{8 s_j r_i}(x_i, \omega_i)) } \,\tilde\eta(|z'|)\, (\omega_0 + \sqrt{-1} \partial \bar\partial\, |z_n|^{2\bar\beta} )^{n-1}\nonumber\\
&\le&
\int_{F_{i,j}(D\cap B_{8 s_j r_i}(x_i, \omega_i)) } \,(\tilde\eta  +  |z_n|^{2\bar\beta} \tilde\eta')(|z'|)\,\omega_0^{n-1}\,\le\, 3 k\, 8^{2n }.
\end{eqnarray}

One can easily see that $F_{\infty, j}({\cal S}_x\cap B_{7 s_j}(y, g_x)) $ coincides with $D_j^n$.
\footnote{Here we use the fact that the limit of $D$ coincides with ${\cal S}_x$ modulo a
subset of Hausdorff codimension at least $4$ under the Gromov-Hausdorff convergence of $(M,r_i^{-2} \omega_i, x_i)$ to $({\cal C}_x, \omega_x, o)$. Clearly, the limit
lies in ${\cal S}_x$. On the other hand, by \cite{cheegercoldingtian}, there is no singular point of ${\cal C}_x$ outside the limit of $D$ for which there is a tangent cone of
type $\CC^{n-1}\times {\cal C}_y'$.} We can also prove that for any $m > n$,
$F_{i,j}^m(D\cap B_{7 s_j r_i}(x_i, \omega_i)) $ converge to a local divisor $D_{j}^m\subset V_j^m\subset \CC^m $.

For convenience, we summarize the above as follows with one extra property.
\begin{lemm}
\label{lemm:limit2}
For any $\epsilon > 0$ small, there is a $j_\epsilon$ such that for any $j\ge j_\epsilon$, the Lipschtz map $F_{\infty,j}$ maps to $B_{7 s_j}(y, g_x))$
into $B_{7+\epsilon}(o, g_{\bar\beta})$ satisfying: 

\vskip 0.1in
\noindent
(1) Its image contains $B_{7-\epsilon}(o, g_{\bar\beta})$; 

\vskip 0.1in
\noindent
(2) $F_{\infty, j}({\cal S}_x\cap B_{7 s_j}(y, g_x)) $
is a local divisor $D_j^n$ which is contained in a tubular neighborhood $T_{8,\epsilon}$; 

\vskip 0.1in
\noindent
(3) For any $\delta >0$, there is an $\epsilon'=\epsilon'(\delta)$ such that
$F^{-1}_{\infty,j} (T_{6,\epsilon'}) \subset T_{\delta} ({\cal S}_x)\cap B_{(6+\epsilon) s_j} (y,g_x)$.

\end{lemm}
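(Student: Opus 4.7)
The plan is to read off properties (1)--(3) directly from the construction of $F_{\infty,j}$ already carried out in the paragraphs preceding the lemma. Recall that $F_{\infty,j}$ is the $C^{0,\alpha}$-limit (after passing to a subsequence) of the biholomorphisms $F_{i,j}:B_{8 s_j r_i}(x_i,\omega_i)\to \CC^n$, where uniform Lipschitz control comes from Corollary \ref{coro:3-1} applied to the sections $S^b_{i,j}$ together with \eqref{eq:lb-1}, and convergence is $C^\infty$ on compact subsets of the regular part of ${\cal C}_x$. The ambient containment $F_{\infty,j}(B_{7 s_j}(y,g_x))\subset B_{7+\epsilon}(o,g_{\bar\beta})$ follows immediately from the approximation $\sup_{U_j}|F_{i,j}\circ\tilde\phi_i\circ\vartheta_j - (z_1,\dots,z_n)|\le\epsilon$ together with $s_j^{-2}\vartheta_j^*\omega_x\to\omega_{\bar\beta}$, once $j\ge j_\epsilon$ and $i$ is large.

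For (1), fix $w\in B_{7-\epsilon}(o,g_{\bar\beta})$ with $w_n\ne 0$; for $j\ge j_\epsilon$ one has $w\in U_j$ and $\vartheta_j(w)\in B_{7-\epsilon/2}(y,g_x)$. The approximation places $F_{\infty,j}(\vartheta_j(w))$ within $2\epsilon$ of $w$, and a standard open-mapping/degree argument (valid because a sufficiently small holomorphic perturbation of the identity remains surjective on a slightly smaller ball) shows that the image of $F_{\infty,j}$ contains a neighborhood of $w$; sweeping $w$ gives coverage of $B_{7-\epsilon}(o,g_{\bar\beta})\setminus\{w_n=0\}$. The axis $\{w_n=0\}\cap B_{7-\epsilon}$ is then recovered by closedness of the image (compactness of the domain) together with (2), which produces the divisor $D_j^n$ as the image of ${\cal S}_x$.

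Property (2) was already established in the text preceding the lemma: applying Bishop's theorem to the uniform volume bound on $F_{i,j}(D\cap B_{7 s_j r_i}(x_i,\omega_i))$ coming from the cut-off slicing computation yields convergence to a local subvariety $D_j^n\subset\CC^n$, while the identification $F_{\infty,j}({\cal S}_x\cap B_{7 s_j}(y,g_x))=D_j^n$ uses that $D$ Gromov--Hausdorff converges to ${\cal S}_x$ modulo the codimension-4 stratum (via \cite{cheegercoldingtian}). The inclusion $D_j^n\subset T_{8,\epsilon}$ reflects that $D\cap B_{8 s_j r_i}(x_i,\omega_i)$ lies near the model singular axis of $\CC^{n-1}\times {\cal C}_y'$, which $F_{i,j}$ sends close to $\{|z'|<8,\ z_n=0\}$.

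For (3), I use that on the regular part the composition $F_{\infty,j}\circ\vartheta_j$ is an $\epsilon$-perturbation of the coordinate inclusion, so if $F_{\infty,j}(\vartheta_j(w))\in T_{6,\epsilon'}$ then $|w'|<6+\epsilon$ and $|w_n|<\epsilon'+2\epsilon$, giving $\vartheta_j(w)\in B_{(6+\epsilon) s_j}(y,g_x)$ and $d_{g_x}(\vartheta_j(w),{\cal S}_x)\lesssim s_j(\epsilon'+2\epsilon)^{\bar\beta}$; this last quantity is $<\delta$ once $\epsilon'$ is chosen small enough in terms of $\delta$, $j$, and $\epsilon$. Points lying within $T_\delta({\cal S}_x)$ satisfy the conclusion trivially. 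The remaining case --- points of $B_{7 s_j}(y,g_x)\setminus T_\delta({\cal S}_x)$ mapping into $T_{6,\epsilon'}$ --- is ruled out by compactness and a contradiction argument: any limit along $\epsilon'_k\to 0$ would be a regular point whose image lies in $\overline{T_{6,0}}\cap V_j^n\subset D_j^n$, contradicting that $F_{\infty,j}$ is a biholomorphism from $B_{8 s_j}(y,g_x)$ onto its image (a limit of biholomorphisms with respect to the smooth complex structure on $M$) and hence satisfies $F_{\infty,j}^{-1}(D_j^n)\cap B_{7 s_j}(y,g_x)={\cal S}_x\cap B_{7 s_j}(y,g_x)$. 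The main obstacle is precisely this last step, namely making rigorous that $F_{\infty,j}$ restricted to the regular part is locally biholomorphic and that its inverse on compacts of $V_j^n\setminus D_j^n$ is uniformly continuous, so that a single modulus $\epsilon'=\epsilon'(\delta)$ works.
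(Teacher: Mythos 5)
Parts (1) and (2) of your argument are fine, and they coincide with the paper's treatment: both are simply read off from the construction preceding the lemma (the $\epsilon$-approximation of $F_{i,j}\circ\tilde\phi_i\circ\vartheta_j$ by the coordinates, the Bishop-theorem limit of $F_{i,j}(D\cap B_{7 s_j r_i}(x_i,\omega_i))$, and the identification of the limit of $D$ with ${\cal S}_x$), which is why the paper's proof just says these have already been shown. The quantitative estimate you give in the first half of (3) is harmless but also not decisive, since the approximation error $\epsilon$ is fixed and does not shrink with $\epsilon'$, so everything hinges on your ``remaining case''.

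And there is the genuine gap. To rule out points of $B_{7 s_j}(y,g_x)\setminus T_\delta({\cal S}_x)$ mapping into $T_{6,\epsilon'}$ you invoke that $F_{\infty,j}$ is a biholomorphism from $B_{8 s_j}(y,g_x)$ onto its image, hence that $F_{\infty,j}^{-1}(D_j^n)\cap B_{7 s_j}(y,g_x)={\cal S}_x\cap B_{7 s_j}(y,g_x)$. Neither statement is available: a locally uniform limit of injective holomorphic maps need not be injective, and the injectivity of the \emph{limit} map $F_{\infty,j}$ is precisely the nontrivial content of Assumption ${\bf A}_1$, which the paper deliberately does not assume in the Appendix and only sketches how to verify in Remark \ref{rema:c-x}. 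So your argument assumes, at the key step, essentially what is to be proved; you yourself flag this as ``the main obstacle''. The paper closes exactly this gap by running the contradiction at finite $i$, where injectivity \emph{is} known: if (3) failed, a limit point $z_\infty$ with $d(z_\infty,{\cal S}_x)\ge\delta$ would map into $D_j^n$; since $F_{i,j}(D)$ converges to $D_j^n$ and $F_{i,j}\to F_{\infty,j}$ smoothly near the regular point $z_\infty$, the preimage $F_{i,j}^{-1}\bigl(F_{i,j}(D)\cap B_{6.5}(o,g_{\bar\beta})\bigr)$ would, for $i$ large, acquire a component far from $D$ in addition to $D$ itself, contradicting the fact (established before the lemma) that $F_{i,j}$ is one-to-one on $B_{7 s_j r_i}(x_i,\omega_i)$. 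To repair your proof you would need either to reproduce this finite-$i$ collision argument (e.g.\ via a Hurwitz/Rouch\'e-type argument applied to a local defining function of $F_{i,j}(D)$ pulled back by $F_{i,j}$), or to first prove the injectivity of $F_{\infty,j}$ along the lines of Remark \ref{rema:c-x}, which is a substantially harder route than the lemma requires.
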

\begin{proof}
I have shown the validity of (1) and (2). For (3), we can prove by contradiction. If not true, then $F^{-1}_{\infty, j}(V_j^n\cap B_{6.5} (o,g_{\bar\beta}))$ has
at least two distinct components, one lies in ${\cal S}_x$ while another is not. This implies that for $i$ sufficiently large,
the pre-image $F_{i,j}^{-1}(F_{i,j}(D)\cap B_{6.5} (o,g_{\bar\beta})$ has at least two components, which contradicts to the fact that $F_{i,j}$ is one-to-one
on $B_{7 s_j r_i}(x_i, \omega_i)$.
\end{proof}

Next we observe: For $i, j$ sufficiently large, there are
uniformly bounded functions $\varphi_{i,j}$ on $B_{8 s_j r_i}(x_i, \omega_i)$ satisfying:
\begin{equation}
\label{eq:potential}
(s_j r_i)^{-2} \omega_i\, = \,\sqrt{-1}\, \partial \bar\partial\, \varphi_{i,j}~~~{\rm on}~~B_{8 s_j r_i}(x_i, \omega_i).
\end{equation}
This is because of the almost constancy of $S_{i,j}^0$. 
A consequence of this observation is that the volume of $D\cap B_{7 s_j r_i}(x_i, \omega_i)$ with respect to $(s_j r_i)^{-2} \omega_i$
is uniformly bounded. In fact, we can prove more.

\begin{lemm}
\label{lemm:est4}
We adopt the notations above. Assume that (1) $\xi:\RR\mapsto [0,1]$ is a smooth function with $\xi(t) = 1$ for any $t \ge 8 \epsilon$ and (2) $f$ is
a holomorphic function on $F_{\infty,j}(B_{7 s_j}(y, g_x))$ such that $|f(z',z_n)| \ge |z_n|$ whenever $|z_n| \ge 8 \epsilon$. Then there is a uniform
constant $C$ such that
$$s_j ^{2-2n}\,\int_{ B_{6 s_j }(y, g_x)} |\nabla ( h\cdot F_{\infty,j})|^2_{\omega_x} \,\omega_x^n\,\le\, C \,\int_{F_{\infty,j}(B_{7 s_j }(y, g_x)) }
\sqrt{-1}\, \partial h\wedge \bar\partial h\wedge \omega_0^{n-1},$$
where $h(z',z_n)  = \xi\cdot |f|^2 (z',z_n)$ and $\omega_0$ denotes the Euclidean metric on $\CC^{n-1}$.
\end{lemm}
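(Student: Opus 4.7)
The plan is to establish the inequality at finite $i$ via the biholomorphism $F_{i,j}:B_{8s_jr_i}(x_i,\omega_i)\to\CC^n$ and then pass to the limit $i\to\infty$. The weight $s_j^{2-2n}$ is exactly the Dirichlet scaling of $|\nabla u|^2_g\,\omega^n$ under the rescaling $g_x\mapsto (s_jr_i)^{-2}g_x$ that produces the smooth convergence $\omega_i\to\omega_x$; setting $\Omega_i:=(s_jr_i)^{-2}\omega_i$, \eqref{eq:potential} furnishes a uniformly bounded K\"ahler potential $\varphi_{i,j}$ with $\Omega_i=\sqrt{-1}\partial\bar\partial\varphi_{i,j}$ on $B_{8s_jr_i}(x_i,\omega_i)$.

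First, combining the K\"ahler identity $|\nabla u|^2_\omega\omega^n=2n\,\sqrt{-1}\,\partial u\wedge\bar\partial u\wedge\omega^{n-1}$ with the holomorphicity of $F_{i,j}$ (so that $\partial(h\circ F_{i,j})=F_{i,j}^*\partial h$), and performing a change of variables under the biholomorphism $F_{i,j}$, one finds
\begin{equation*}
(s_jr_i)^{2-2n}\int_{B_{6s_jr_i}}|\nabla(h\circ F_{i,j})|^2_{\omega_i}\omega_i^n=2n\int_{F_{i,j}(B_{6s_jr_i})}\sqrt{-1}\,\partial h\wedge\bar\partial h\wedge(\sqrt{-1}\,\partial\bar\partial\tilde\varphi_{i,j})^{n-1},
\end{equation*}
where $\tilde\varphi_{i,j}:=\varphi_{i,j}\circ F_{i,j}^{-1}$ is a uniformly bounded plurisubharmonic function on the open set $F_{i,j}(B_{7s_jr_i})\subset\CC^n$. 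The cut-off $\xi$ forces the support of $\sqrt{-1}\,\partial h\wedge\bar\partial h$ into $\{|z_n|\ge 8\epsilon\}$, which by Lemma \ref{lemm:limit2}(3) stays at positive distance from both $F_{i,j}(\partial B_{7s_jr_i})$ and the limit divisor $D_j^n$, uniformly in $i$. The classical Chern-Levine-Nirenberg inequality, applied via iterated integration by parts against a standard cut-off supported in $F_{i,j}(B_{7s_jr_i})$, then yields
\begin{equation*}
\int\sqrt{-1}\,\partial h\wedge\bar\partial h\wedge(\sqrt{-1}\,\partial\bar\partial\tilde\varphi_{i,j})^{n-1}\le C\,\|\tilde\varphi_{i,j}\|_\infty^{n-1}\int_{F_{i,j}(B_{7s_jr_i})}\sqrt{-1}\,\partial h\wedge\bar\partial h\wedge\omega_0^{n-1}.
\end{equation*}

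Finally, since $h\circ F_{\infty,j}$ vanishes in a neighborhood of ${\cal S}_x\cap B_{6s_j}(y,g_x)$ (from vanishing of $\xi$ on $\{|z_n|<8\epsilon\}$ together with Lemma \ref{lemm:limit2}(3)), both sides of the desired inequality are Riemann integrals supported on a relatively compact subset of the regular locus, where the maps $F_{i,j}$ and metrics $\omega_i$ converge smoothly to $F_{\infty,j}$ and $\omega_x$; letting $i\to\infty$ then delivers the claimed bound. The main technical hurdle will be the CLN step: one must verify that $F_{i,j}(B_{7s_jr_i})$ is an honest open subset of $\CC^n$ (guaranteed by the biholomorphy of $F_{i,j}=F_{i,j}^n$) and that the support of $\sqrt{-1}\,\partial h\wedge\bar\partial h$ is compactly contained therein, uniformly in $i$, so that the integration-by-parts argument produces no boundary contribution; the cut-off $\xi$ and Lemma \ref{lemm:limit2}(3) are precisely what provides this uniform separation.
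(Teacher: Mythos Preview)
Your strategy coincides with the paper's: work at finite $i$ using the biholomorphism $F_{i,j}$ and the uniformly bounded potential $\varphi_{i,j}$ from \eqref{eq:potential}, then run a Chern--Levine--Nirenberg type integration by parts before letting $i\to\infty$. Two points, however, separate your write-up from a complete argument and are exactly what the paper supplies.

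First, the CLN step requires $\sqrt{-1}\,\partial h\wedge\bar\partial h$ to be a \emph{closed} positive $(1,1)$-current. This closedness is equivalent to the algebraic identity $\partial h\wedge\partial\bar\partial h=0$, which holds because $h$ is a function of $|f|^2$ with $f$ holomorphic (so $\partial h$ is proportional to $\partial f$ and $\partial\bar\partial h$ to $\partial f\wedge\overline{\partial f}$). The paper states and uses this identity explicitly; you do not mention it, and without it your invocation of CLN is unjustified.

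Second, your support assertion is wrong: the region $\{|z_n|\ge 8\epsilon\}$ certainly meets the boundary of $F_{i,j}(B_{7s_jr_i})$, which is approximately the sphere $\{|z'|^2+|z_n|^{2\bar\beta}\approx 49\}$. Fortunately this claim is unnecessary for CLN, which only needs $F_{i,j}(B_{6s_jr_i})\Subset F_{i,j}(B_{7s_jr_i})$. The paper handles the cut-off more concretely by inserting $\tilde\eta(|z'|)$, a function of $|z'|$ alone with $\tilde\eta\equiv 1$ on $\{|z'|\le 6.3\}$ and $\tilde\eta\equiv 0$ on $\{|z'|>6.8\}$; then $\sqrt{-1}\,\partial\bar\partial\tilde\eta\le 12\,\omega_0$, and iterating the integration by parts (using $\partial h\wedge\partial\bar\partial h=0$ at each step to kill the cross terms) replaces each factor of $\sqrt{-1}\,\partial\bar\partial\tilde\varphi_{i,j}$ by $\omega_0$. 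This is precisely the CLN mechanism, carried out by hand with a cut-off tailored to produce the Euclidean $\omega_0^{n-1}$ on the right. Your generic CLN would reach the same conclusion once closedness is checked and the uniform-in-$i$ separation $F_{i,j}(B_{6s_jr_i})\Subset F_{i,j}(B_{7s_jr_i})$ is noted.
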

\begin{proof}
It suffices to prove the corresponding inequality for each $F_{i,j}$ and then let $i$ go to $\infty$. As above, let
$\tilde\eta:\RR \mapsto \RR$ be a cut-off function
such that $\tilde\eta(t) = 1$ for $t \le 6.3$, $\tilde\eta(t) = 0 $ for $t > 6.8$, $|\tilde\eta'| \le 2$ and $|\tilde\eta''| \le 4$, then we have
$$\sqrt{-1}\, \partial \bar\partial\, \tilde\eta(|z'|) \,\le\, 12 \,\omega_0,$$
moreover, $\tilde \eta(|z'|) |d h |^2$ vanishes near the boundary of $F_{i,j}(B_{7 s_j r_i}(x_i, \omega_i))$. By the definition of $h$, we also have
$$\partial h \wedge \partial\bar\partial h \,=\,0.$$
Using these facts and integration by parts, we can deduce
\begin{eqnarray}
&&(s_j r_i)^{-2n}\,\int_{B_{7 s_j r_i}(x_i, \omega_i) } \,\eta(|z'|) \,|\nabla (h\cdot F_{i,j})|^2_{\omega_i} \,\omega^n_i\nonumber\\
&=&n\,\int_{F_{i,j}(B_{7 s_j r_i}(x_i, \omega_i))} \,\eta(|z'|) \,\sqrt{-1} \,\partial h \wedge \bar \partial h \,\wedge\,(\sqrt{-1}\,
\partial \bar\partial (\varphi_{i,j}\cdot F_{i,j}^{-1}) )^{n-1}\nonumber\\
&\le&
C\,\int_{F_{i,j}(B_{7 s_j r_i}(x_i, \omega_i)) } \,\sqrt{-1} \,\partial h\wedge \bar \partial h \wedge \omega_0^{n-1}.
\end{eqnarray}
Then the lemma follows.

\end{proof}

Now we can complete the proof of Lemma \ref{lemm:cut-off-2}. The arguments are similar to those of the proof for the case with Assumption ${\bf A}_1$.
For the readers' convenience, we repeat some of them here.

For any small $\epsilon_0 >0$, since $\bar {\cal S}_{x}$ has vanishing Hausdorff measure of dimension strictly bigger than $2n-4$,
we can find a finite cover of $\bar{\cal S}_{x} \cap B_{\bar\epsilon^{-1}}(x,g_x)$ by balls $B_{ r_a}(y_a,g_x)$ ($a\,=\,1,\cdots,l$)
satisfying:

\vskip 0.1in
\noindent
(i) $y_a \in \bar {\cal S}_{x}$ and $2 r_a \le \epsilon_0$;

\vskip 0.1in
\noindent
(ii) $B_{r_a/2}(y_a,g_x)$ are mutually disjoint;

\vskip 0.1in
\noindent
(iii) $\sum_a r_a^{2n-3} \,\le\, 1$;

\vskip 0.1in
\noindent
(iv) The number of overlapping balls $B_{2r_a}(y_a,g_x)$ is uniformly bounded.
\vskip 0.1in

We denote by $\bar\eta$ a cut-off function:  $\RR \mapsto \RR$
satisfying: $0\,\le\, \bar \eta \,\le\, 1$, $|\bar \eta'(t)| \,\le\, 2$ and
$$\bar\eta(t)\,=\, 1~~{\rm for}~~t \,>\, 1.6 ~~{\rm and}~~\bar\eta(t) \,=\, 0 ~~{\rm for}~~ t \,\le \,1.1. $$

As before, we set $\chi\,=\,\prod_a \chi_a$, where
$$\chi_a(y)\,=\,\bar\eta \left(\frac{d(y , y_a)}{r_a}\right)~~{\rm if}~y\in B_{2 r_a}(y_a,g_x)~~~{\rm and}~~~\chi_a(y)\,=\,1~~{\rm otherwise}.$$
Then $\chi$ vanishes on the closure of $B\,=\,\cup_{a} B_{r_a}(y_a,g_x)$ which contains $\bar {\cal S}_{x}\cap B_{\bar\epsilon^{-1}}(x,g_x)$, furthermore,
$\chi$ satisfies
\begin{equation}
\label{eq:prod-a1}
\int_{{\cal C}_x} |\nabla \chi|^2 \, \omega_x^n \,\le \, C \,\epsilon_0,
\end{equation}
where $C$ is a uniform constant.

There is a finite cover of ${\cal S}_x\cap B_{\bar\epsilon^{-1}}(x,g_x) \backslash B$ by balls $B_{6 s_b}(y_b, g_x)$ for which Lemma \ref{lemm:limit2} holds
($b=1,\cdots, N$).
We may assume that the number of overlapping balls $B_{6 s_b}(y_b,g_x)$ is bounded.
Choose smooth functions $\{\zeta_b\}$ associated to the cover $\{B_{6 s_b}(y_b, g_x)\}$ satisfying: 

\vskip 0.1in
\noindent
(1) $0\le \zeta_b \le 1$; 

\vskip 0.1in
\noindent
(2) ${\rm supp}(\zeta_b)$ is contained in $B_{6 s_b}(y_b, g_x)$; 

\vskip 0.1in
\noindent
(3) $\sum_b \zeta_b \equiv 1$ near ${\cal S}_x\cap B_{\bar\epsilon^{-1}}(x,g_x) \backslash B$.
\vskip 0.1in
\noindent
Therefore, $\{\zeta_b\}, 1 -\sum_b \zeta_b$ form a partition of unit for the cover $\{B_{6 s_b}(y_b, g_x)\}$ and $B_{\bar\epsilon^{-1}}(x,g_x) $.

As before, we denote by $\eta$ a cut-off function: $\RR \mapsto \RR$ satisfying:
$0\,\le\, \eta \,\le\, 1$, $|\eta'(t)| \,\le\, 1$ and
$$\eta(t)\,=\, 0~~{\rm for}~~t \,>\, \log(-\log \delta^3) ~~{\rm and}~~\eta(t) \,=\, 1 ~~{\rm for}~~ t \,< \,\log (-\log \delta) . $$
For each $b$, by Lemma \ref{lemm:limit2}, there is a divisor $D_b^n\subset B_6(0,g_{\bar\beta_b})$,
where $(1-\bar\beta_b)=k_b (1-\beta_\infty)$. Choose a local defining function $f_b$ of $D_b^n$ satisfying (2) in Lemma \ref{lemm:est4}.
We define a function $\gamma_{\bar\epsilon,b}$ on $B_6(o,g_x)$ as follows: If $|f_b|(y) \,\ge\, \bar\epsilon /3$, put $\gamma_{\bar\epsilon,b} (y)\,=\, 1$ and if $|f_b|(y)\,<\,\bar\epsilon$, put
\begin{equation}
\label{eq:gamma-a1b}
\gamma_{\bar\epsilon,b}(y)\,=\, \eta \left ( \log\left (-\log \left(\frac{|f_b|(y)}{\bar\epsilon} \right)\right)\right).
\end{equation}
Then we put
\begin{equation}
\label{eq:gamma-a1}
\gamma_{\bar\epsilon}(y)\,=\, \chi(y)\,(1- \sum_b \zeta_b(y) \,+ \,\sum _b \zeta_b (y) \,\gamma_{\bar\epsilon,b}(y) ).
\end{equation}
Clearly, $\gamma_{\bar\epsilon}$ is smooth. If we choose $\epsilon_0$ and $\delta$ sufficiently small, we have
$\gamma_{\bar\epsilon}(y)\,=\,1$ for any $y$ with $d(y, {\cal S}_x) \,\ge\, \bar\epsilon$, also $\gamma_{\bar\epsilon}$ vanishes in a neighborhood of ${\cal S}_x$.
Furthermore, by using \eqref{eq:prod-a1}, Lemma \ref{lemm:limit2} and Lemma \ref{lemm:est4}, we can also show
$$\int_{B_{{\bar\epsilon}^{-1}}(o,g_x)} \,|\nabla \gamma_{\bar\epsilon}|^2\, \omega_x^n\,\le\, \bar\epsilon.$$
Thus, the proof of Lemma \ref{lemm:cut-off-2} is completed.

\vskip 0.1in
There are other ways of completing the proof of Lemma \ref{lemm:cut-off-2}. One is to verify Assumption ${\bf A}_1$ (cf. Remark \ref{rema:c-x}.
Another is to estimate the volume of tubular neighborhood of ${\cal S}^0_x$. 
Let us outline it in the following. 

For any small $\epsilon_0 >0$, we can find a finite cover of $\bar {\cal S}_{x} \cap B_{\bar\epsilon^{-1}}(x,g_x)$ by balls $B_{ r_a}(y_a,g_x)$ ($a\,=\,1,\cdots,l$) with properties (i)-(iv) as above. Then we can have a smooth function $\chi$ associated to this covering as we did above.
Put $\rho(y)\,=\,d(y, {\cal S}_{x})$ and
$$K \,=\, \overline{B_{{\bar\epsilon}^{-1}}(o,g_x)}\backslash \cup_a B_{ r_a/2}(y_a,g_x).$$
Define $\gamma_{\bar\epsilon} $ according to \eqref{eq:gamma-1}. Clearly, it satisfies (1) and (2) in Lemma \ref{lemm:cut-off-2}. For (3), if $\delta$ is sufficiently small, we
only need to prove
\begin{equation}
\label{eq:gamma-2}
\int_{K} |\nabla \eta\cdot \zeta|^2 \,\omega^n_x\,=\,\int_K |\eta'\cdot \zeta|^2 \,|\nabla \zeta|^2\,\omega_x^n \,\le\, \frac{\bar\epsilon}{2}\,,
\end{equation}
where
$$\zeta(y)\,=\,\log\left (-\log \left(\frac{\rho(y)}{\bar\epsilon} \right)\right).$$
By the well-known co-area formula, we have
$$\int_{K} |\eta'\cdot \zeta|^2\,|\nabla \zeta|^2 \,\omega^n_x \,=\,\int_0^\infty |\eta'(r)|^2\,|\nabla \zeta| \,{\rm Vol}(\zeta^{-1}(r)\cap K) \,dr .$$
Clearly, $\zeta(y)\,=\,r$ implies that $\rho(y) \,=\, \bar\epsilon \,e^{- e^r}$, moreover, if we set $s = |\nabla \zeta|^2$, we have
$$\sqrt{s} \,=\, \frac {1}{ \bar\epsilon}\, e^{e^r- r}.$$
It is a monotonic function for $r > 0$, thus, the inverse $r = r(s)$ exists. By the co-area formula again, we have
$$\int_0^\infty ds \int_{\{ |\nabla \zeta|^2 \ge s\}\cap K } |\eta'\cdot\zeta|^2 \,\omega_x^n \,=\, \int_0^\infty ds \int_{r(s)}^\infty \frac{|\eta'(r)|^2}{|\nabla\zeta| (r)} \,{\rm Vol}(\zeta^{-1}(r)\cap K) \, dr.$$
Exchanging the order of integrals on $r$ and $s$, we get
$$\int_{K} |\eta'\cdot \zeta|^2\,|\nabla \zeta|^2 \,\omega^n_x \,=\, \int_0^\infty ds \int_{\{ |\nabla \zeta|^2 \ge s\}\cap K} |\eta'\cdot\zeta |^2 \,\omega_x^n.$$
Combining the above integrals, we get
$$\int_K |\nabla (\eta\cdot\zeta)|^2\,\omega_x^n\,\,=\,\int_0^\infty s'(t)\, dt \,\int_{\{|\nabla \zeta |^2 \ge s(t)\}\cap K } |\eta'\cdot\zeta|^2 \,\omega_x^n,$$
where
$$s (t) = \frac{1}{ \bar\epsilon ^2 t^2 \,(-\log t)^2} .$$
Since $\eta'(\zeta(y)) = 0$ unless $\bar\epsilon \,\delta^3 \,\le\,\rho(y)\,\le\, \bar\epsilon \,\delta$, we can deduce from this identity and
the following lemma that
$$\int_K |\nabla (\eta\cdot\zeta)|^2\,\omega_x^n\,\le\, C_K \left (\int_0^\delta \bar\epsilon^2 t^2\, s'(t)\, dt \,+\,\int_\delta^\infty \bar\epsilon^2 \delta^2\, s'(t)\, dt\right).$$
We get \eqref{eq:gamma-2} from this estimate since the last two integrals tend to $0$ as $\delta$ goes to $0$. Therefore, Lemma \ref{lemm:cut-off-2} follows from the following.

\begin{lemm}
\label{lemm:a2-a3}
For any compact subset $K\subset {\cal C}_x\backslash \bar {\cal S}_x$, there is a constant $C_K$ such that for any $r < 1$, the volume
of $T_r({\cal S}_x)\cap K$ is bounded by $C_K r^2$, where $T_r({\cal S}_x) \,=\,\{ z\,|\, d(z, {\cal S}_x) \le r\}$.
\end{lemm}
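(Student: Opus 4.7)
The plan is to localize around the generic singular stratum ${\cal S}^0_x = {\cal S}_x \setminus \bar{\cal S}_x$, where Lemma~\ref{lemm:limit2} supplies an effective biholomorphic model onto an open set in $\CC^{n-1}\times {\cal C}_y'$. In that model, the image of ${\cal S}_x$ is a local divisor (complex codimension one), whose $\rho$-tubular neighborhood has volume $O(\rho^{2})$ by direct integration in the transverse $2$-dimensional slice. Transferring this to the source via the biholomorphism will yield the claim.

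Concretely, since $K$ is compact and disjoint from $\bar{\cal S}_x$, set $r_0 = \tfrac{1}{2} d_{g_x}(K, \bar{\cal S}_x) > 0$. For $r < r_0$ every point of $T_r({\cal S}_x)\cap K$ lies within $r$ of ${\cal S}^0_x \cap K'$, where $K' = T_{r_0}(K)$ is a slightly enlarged compact set, still disjoint from $\bar{\cal S}_x$. Cover $K'\cap {\cal S}^0_x$ by finitely many balls $B_{3s_b}(y_b, g_x)$, $b=1,\dots,N$, with $y_b \in {\cal S}^0_x$ and scales $s_b \ge s_* > 0$, such that the enlarged ball $B_{6 s_b}(y_b, g_x)$ supports the biholomorphism $F_b = F_{\infty,j_b}$ onto an open set in $(\CC^{n-1}\times {\cal C}_{y_b}', g_{\bar\beta_b})$. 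The image $D_b^n = F_b({\cal S}_x \cap B_{6s_b}(y_b,g_x))$ is a local divisor and by explicit integration in model coordinates
$$\text{vol}_{g_{\bar\beta_b}}\bigl(T_\rho(D_b^n)\cap B_6(o,g_{\bar\beta_b})\bigr) \le C_0 \rho^2, \qquad \rho < 1.$$
The limiting form of \eqref{eq:lb-1} gives $F_b^*\omega_0 \le C s_b^{-2} \omega_x$, so $F_b$ is $(C^{1/2}s_b^{-1})$-Lipschitz and consequently $F_b\bigl(T_r({\cal S}_x)\cap B_{3s_b}(y_b,g_x)\bigr)\subset T_{C'r/s_b}(D_b^n)$. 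Coupled with a reverse pointwise comparison $\omega_x^n \le C''\, F_b^*\omega_{\bar\beta_b}^n$ on $B_{3s_b}(y_b,g_x)\setminus {\cal S}_x$, this yields
$$\text{vol}_{g_x}\bigl(T_r({\cal S}_x)\cap B_{3s_b}(y_b,g_x)\bigr)\le C''\,\text{vol}_{g_{\bar\beta_b}}\bigl(T_{C'r/s_b}(D_b^n)\bigr)\le C_b\, r^2,$$
and summing over the finite cover produces $C_K$.

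The main obstacle is precisely this reverse volume comparison $\omega_x^n \le C''\, F_b^*\omega_{\bar\beta_b}^n$ on the regular part, since \eqref{eq:lb-1} is only a one-sided estimate. The key observation is that both $s_b^{-2} g_x$ on the source and $g_{\bar\beta_b}$ on the target are Ricci-flat K\"ahler cone metrics which agree as Gromov-Hausdorff limits (from the diffeomorphisms $\vartheta_j$ preceding Lemma~\ref{lemm:limit2}, one has $\|s_j^{-2}\vartheta_j^*\omega_x - \omega_{\bar\beta}\|_{C^2(U_j)} \to 0$ outside the singular set). Combining this smooth convergence with the fact that $F_b$ is a limit of holomorphic maps whose $0$-th coordinate section $S_{i,j}^0$ has almost constant norm (cf.~\eqref{eq:potential}), one concludes that $F_b$ is $C^2$-close to an isometry on the regular part of $B_{3s_b}(y_b,g_x)$; since ${\cal S}_x$ has Lebesgue measure zero, the two-sided comparison on the regular part is enough for the integral volume bound, and the argument closes.
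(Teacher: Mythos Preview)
Your strategy differs from the paper's, and the final step has a real gap.

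The reverse volume comparison $\omega_x^n \le C'' F_b^*\omega_{\bar\beta_b}^n$, which you correctly flag as the main obstacle, is not established uniformly up to ${\cal S}_x$. The smooth convergence $\|s_j^{-2}\vartheta_j^*\omega_x - \omega_{\bar\beta}\|_{C^2(U_j)}\to 0$ takes place only on $U_j = \{(z',z_n) : |z'|<9,\ 1/j < |z_n|^{\bar\beta} < 9\}$, i.e., outside a tube of fixed width around the model singular set. In your finite cover you freeze $j = j_b$, so the two-sided comparison you can extract holds only outside a fixed-width collar of ${\cal S}_x$; for $r$ below that width---precisely the regime of interest---you have no control. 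The appeal to \eqref{eq:potential} does not help either: a uniformly bounded local K\"ahler potential for $s_b^{-2}\omega_x$ does not bound the Monge--Amp\`ere density from above pointwise. Note also that the only Lipschitz control actually established for $F_b$ is into the \emph{Euclidean} metric on $\CC^n$ (from \eqref{eq:lb-1}); without a genuine bi-Lipschitz statement with respect to $g_{\bar\beta_b}$ you cannot match the tube inclusion with the cone-metric volume and recover the exponent $2$.

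The paper avoids any two-sided metric comparison. It instead asserts a uniform lower density bound
\[
r^{2-2n}\,\mathrm{vol}\bigl({\cal S}_x \cap B_r(y,g_x)\bigr) \ge c > 0, \qquad y\in K\cap {\cal S}_x,\ r\le 1,
\]
obtained by a blow-up argument (at each such $y$ the tangent cone is $\CC^{n-1}\times{\cal C}_y'$, whose singular set $\CC^{n-1}\times\{0\}$ has positive $(2n-2)$-density). Combined with the finite $(2n-2)$-measure of ${\cal S}_x$ on compact subsets away from $\bar{\cal S}_x$---already obtained earlier in the Appendix via the divisor volume bound on $D_j^n$---a standard Vitali covering argument gives: $T_r({\cal S}_x)\cap K$ is covered by at most $C\, r^{2-2n}$ balls of radius $2r$ centered on ${\cal S}_x$, each of $g_x$-volume at most $C' r^{2n}$ by Bishop--Gromov, hence total volume $\le C_K r^2$. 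This packing argument needs only a one-sided density bound and ball-volume upper bounds, neither of which requires controlling $\omega_x$ near the singular set beyond what is already known.
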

This follows from an estimate on the lower bound of the ratio $r^{2-2n} {\rm vol}({\cal S}_x\cap B_r(y,g_x))$ for any $r\le 1$ and $y \in K\cap {\cal S}_x$.
Such an estimate can be easily derived by a blow-up argument and what we have obtained above.

\begin{rema}
\label{rema:c-x}
In fact, Assumption ${\bf A}_1$ can be established by the techniques used above. An approach is to use the maps $F_{\infty,j}^m$. 
One can show that each composition $F_{\infty,j}^n\cdot (F_{\infty,j}^m)^{-1}: V^m_j\,\mapsto V_j^n$
is well-defined and the limit of
$F_{i,j}^n\cdot (F_{i,j}^m)^{-1}$. Each such map supposes to be finite and one-to-one on a sufficiently large open subset.
One can deduce from these that $F_{\infty,j}^n\cdot (F_{\infty,j}^m)^{-1}$ is one-to-one. It implies that $F_{\infty,j}$ is an one-to-one map. Then we 
get what we wanted.
\end{rema}

\end{document}